\newtheorem{question}{Question}[section]
\newtheorem{lemma}[question]{Lemma}
\newtheorem{theorem}[question]{Theorem}
\newtheorem{conjecture}[question]{Conjecture}
\newtheorem{corollary}[question]{Corollary}
\newtheorem{construction}[question]{Construction}
\newcommand\dd{\hbox{-}}
\newcommand\poi{\mathbb{N}}
\DeclareMathOperator{\tw}{tw}
\DeclareMathOperator{\dist}{dist}
\title{A simple layered-wheel-like construction}
\author[M. Chudnovsky]{Maria Chudnovsky$^{\dag \ast}$}
\author[D. Fischer]{David Fischer$^{\dag}$}
\author[S. Hajebi]{Sepehr Hajebi$^{\ddag}$}
\author[S. Spirkl]{Sophie Spirkl$^{\ddag \mathsection}$}
\author[B. Walczak]{Bartosz Walczak$^{\parallel}$}
\thanks{$^{\dag}$ Princeton University, Princeton, NJ, USA}
\thanks{$^{\ddag}$ Department of Combinatorics and Optimization, University of Waterloo, Waterloo, Ontario, Canada}
\thanks{$^{\ast}$ Supported by NSF Grant DMS-2348219 and AFOSR grant FA9550-22-1-0083.}
\thanks{$^{\mathsection}$ We acknowledge the support of the Natural Sciences and Engineering Research Council of Canada (NSERC), [funding reference number RGPIN-2020-03912].
Cette recherche a \'et\'e financ\'ee par le Conseil de recherches en sciences naturelles et en g\'enie du Canada (CRSNG), [num\'ero de r\'ef\'erence RGPIN-2020-03912]. This project was funded in part by the Government of Ontario. This research was conducted while Spirkl was an Alfred P. Sloan Fellow.}
\thanks{$^{\parallel}$ Department of Theoretical Computer Science, Faculty of Mathematics and Computer Science, Jagiellonian University, Krak\'ow, Poland. Partially supported by the National Science Center of Poland grant 2019/34/E/ST6/00443.}
\begin{document}
\raggedbottom

\begin{abstract}
In recent years, there has been significant interest in characterizing the induced subgraph obstructions to bounded treewidth and pathwidth. While this has recently been resolved for pathwidth, the case of treewidth remains open, and prior work has reduced the problem to understanding the \emph{layered-wheel-like} obstructions -- graphs that contain large complete minor models with each branching set inducing a path, exclude large walls as induced minors, exclude large complete bipartite graphs as induced minors, and exclude large complete subgraphs.

There are various constructions of such graphs, but they are all rather involved. In this paper, we present a simple construction of layered-wheel-like graphs with arbitrarily large treewidth. Three notable features of our construction are: (a) the vertices of degree at least four can be made arbitrarily far apart; (b) the girth can be made arbitrarily large; and (c) every outerstring induced subgraph of the graphs from our construction has treewidth bounded by an absolute constant. In contrast, among several previously known constructions of layered wheels, none achieves (a); at most one satisfies either (b) or (c); and none satisfies both (b) and (c) simultaneously.

In particular, this is related to a former conjecture of Trotignon, that every graph with large enough treewidth, excluding large walls and large complete bipartite graphs as induced minors, and large complete subgraphs, must contain an outerstring induced subgraph of large treewidth. Our construction provides the first counterexample to this conjecture that can also be made to have arbitrarily large girth.
\end{abstract}

\maketitle

\section{Introduction}

\subsection{Background and the main result} \label{background}

Graphs in this paper have finite vertex sets, no loops and no parallel edges. Let $G=(V(G),E(G))$ be a graph. A graph $H$ is a \emph{minor of\/ $G$} if $H$ is isomorphic to a graph that can be obtained from $G$ by a series of vertex deletions, edge deletions, and edge contractions, and $H$ is an \emph{induced minor of\/ $G$} if $H$ is isomorphic to a graph that can be obtained from $G$ by a series of vertex deletions and edge contractions (and deleting the loops and the parallel edges produced in the contraction process). A \emph{tree decomposition} $(T, \chi)$ of $G$ consists of a tree $T$ and a map $\chi\colon V(T) \to 2^{V(G)}$ such that the following hold.
\begin{itemize}
    \item For every vertex $v \in V(G)$, there exists $t \in V(T)$ such that $v \in \chi(t)$.
    \item For every edge $v_1v_2 \in E(G)$, there exists $t \in V(T)$ such that $v_1, v_2 \in \chi(t)$.
    \item For every $v \in V(G)$, the subgraph of $T$ induced by $\{t \in V(T)\colon v \in \chi(t)\}$ is connected.
\end{itemize}

For a tree decomposition $(T, \chi)$ of $G$ with $V(T) = \{t_1, \hdots, t_n\}$, the sets $\chi(t_1), \hdots, \chi(t_n)$ are called the \emph{bags of\/ $(T, \chi)$}. The \emph{width of\/ $(T, \chi)$} is defined as $\max_{t \in V(T)} |\chi(t)|-1$. The \emph{treewidth} of $G$, denoted by $\tw(G)$, is the minimum width of a tree decomposition of $G$.

The Grid Theorem of Robertson and Seymour, Theorem~\ref{wallminor} below, fully describes the unavoidable subgraphs of graphs with large treewidth. For every $k\in \poi$, the \emph{$(k \times k)$-wall}, denoted by $W_{k \times k}$, is a planar graph with maximum degree three and with treewidth $k$ (see Figure \ref{fig:5x5wall}; a precise definition can be found in \cite{wallpaper}). Every subdivision of $W_{k \times k}$ is also a graph of treewidth $k$.

\begin{theorem}[Robertson and Seymour \cite{RS-GMV}]\label{wallminor}
There is a function\/ $f\colon \poi \rightarrow \poi$ such that every graph of treewidth at least\/ $f(k)$ contains a subdivision of\/ $W_{k \times k}$ as a subgraph.
\end{theorem}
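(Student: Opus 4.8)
The plan is to prove the theorem in three stages: reduce the topological (subdivision) statement to the purely minor-theoretic statement that large treewidth forces a large grid minor; establish that grid-minor statement via treewidth duality together with a routing argument; and finally convert the grid minor back into a wall subdivision. Throughout, ``grid'' means the $(m \times m)$ planar grid.

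\textbf{Reduction step.} Since $W_{k \times k}$ has maximum degree three, I would first record the standard fact that a graph $H$ with $\Delta(H) \le 3$ is a minor of $G$ if and only if $G$ contains a subdivision of $H$ as a subgraph: given an $H$-model, prune each branch set to a tree with at most three leaves (a subdivided path or claw), and these assemble into a topological model. Moreover, an $(m \times m)$-grid contains $W_{k \times k}$ as a minor (indeed as a subgraph after deleting edges) whenever $m$ is at least a suitable function of $k$. Hence it suffices to produce a function $g \colon \poi \to \poi$ such that $\tw(G) \ge g(m)$ implies $G$ has the $(m \times m)$-grid as a minor.

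\textbf{From treewidth to a highly linked set.} Next I would invoke the duality between treewidth and brambles (Seymour--Thomas): $\tw(G) \ge g(m)$ yields a bramble, equivalently a haven, of order larger than $g(m)$. From a bramble of sufficiently large order one extracts, by a Menger/counting argument, a set $Z \subseteq V(G)$ with $|Z|$ large in terms of $m$ that is \emph{well-linked}, meaning that for every partition of $Z$ into two halves of equal size there are $|Z|/2$ pairwise vertex-disjoint paths joining them. This is the familiar ``large treewidth $\Rightarrow$ large well-linked set'' passage.

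\textbf{Routing a grid --- the main obstacle.} The technical heart is to turn the well-linked set $Z$ into an $(m \times m)$-grid minor. I would proceed by an iterative ``peel and reroute'' procedure: repeatedly select a long path that is still well-linked into the remaining part of $Z$, set it aside, and control via Menger's theorem how much of $Z$ is consumed at each step, so that after $m$ rounds the $m$ chosen paths can be rerouted so each pair crosses in the pattern required by a subdivided grid. Making this bookkeeping rigorous --- guaranteeing that enough of the well-linked set survives every peeling, and that the final family of paths forms a genuine grid model rather than merely a dense tangle --- is where essentially all the difficulty lies and is the source of the (large) bound on $f$; I would not attempt to optimize it.

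\textbf{Finishing.} Choosing $m$ large enough relative to $k$, pass inside the $(m \times m)$-grid minor of $G$ to a $W_{k \times k}$ minor, and then apply the maximum-degree-three observation from the reduction step to upgrade this to a subdivision of $W_{k \times k}$ appearing as a subgraph of $G$. Composing the functions obtained in the three stages gives the required $f$.
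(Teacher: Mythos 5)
The paper does not prove Theorem~\ref{wallminor}; it is the Grid Theorem of Robertson and Seymour, cited from \cite{RS-GMV} as a black box, so there is no ``paper's own proof'' to compare against. Your task here was really just to recognize the statement as the Excluded Grid Theorem and to cite it.

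As for the sketch itself: the framing is sound at a high level. The reduction between minors and subdivisions for graphs of maximum degree three is correct (prune each branch set to a subtree with at most three leaves), and the passage from large treewidth to a large well-linked set via brambles or havens is a standard and correct first step. However, the third stage --- turning a well-linked set into an actual grid minor --- is the entire content of the theorem, and your description of it (``peel and reroute,'' ``control via Menger's theorem how much of $Z$ is consumed,'' ``can be rerouted so each pair crosses in the pattern required'') does not constitute an argument: it names the difficulty without resolving it, and you say so yourself. There is also a small imprecision in the final picture: a grid model is not obtained by making paths merely \emph{cross}; one needs a system of ``horizontal'' and ``vertical'' paths whose intersection pattern is controlled so that contracting the pieces between consecutive crossings yields the grid, which is a stronger combinatorial requirement than pairwise crossing and is where the real work (tangles/meshes in Robertson--Seymour, or the iterative path-of-sets constructions in later proofs) happens. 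So the proposal is a correct roadmap for where a proof would go, but as a proof it has a gap coextensive with the theorem itself. Since the paper treats this as a citation, the right move is to do the same rather than attempt to reprove it.
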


\begin{figure}[t!]
\centering

\begin{tikzpicture}[scale=2,auto=left]
\tikzstyle{every node}=[inner sep=1.2pt, fill=black,circle,draw]
\centering

\node (s10) at (0,1.2) {};
\node(s12) at (0.6,1.2){};
\node(s14) at (1.2,1.2){};
\node(s16) at (1.8,1.2){};
\node(s18) at (2.4,1.2){};

\node (s20) at (0,0.9) {};
\node (s21) at (0.3,0.9) {};
\node(s22) at (0.6,0.9){};
\node (s23) at (0.9,0.9) {};
\node(s24) at (1.2,0.9){};
\node (s25) at (1.5,0.9) {};
\node(s26) at (1.8,0.9){};
\node (s27) at (2.1,0.9) {};
\node(s28) at (2.4,0.9){};
\node (s29) at (2.7,0.9) {};

\node (s30) at (0,0.6) {};
\node (s31) at (0.3,0.6) {};
\node(s32) at (0.6,0.6){};
\node (s33) at (0.9,0.6) {};
\node(s34) at (1.2,0.6){};
\node (s35) at (1.5,0.6) {};
\node(s36) at (1.8,0.6){};
\node (s37) at (2.1,0.6) {};
\node(s38) at (2.4,0.6){};
\node (s39) at (2.7,0.6) {};

\node (s40) at (0,0.3) {};
\node (s41) at (0.3,0.3) {};
\node(s42) at (0.6,0.3){};
\node (s43) at (0.9,0.3) {};
\node(s44) at (1.2,0.3){};
\node (s45) at (1.5,0.3) {};
\node(s46) at (1.8,0.3){};
\node (s47) at (2.1,0.3) {};
\node(s48) at (2.4,0.3) {};
\node (s49) at (2.7,0.3) {};

\node (s51) at (0.3,0.0) {};
\node (s53) at (0.9,0.0) {};
\node (s55) at (1.5,0.0) {};
\node (s57) at (2.1,0.0) {};
\node (s59) at (2.7,0.0) {};

\foreach \from/\to in {s10/s12, s12/s14,s14/s16,s16/s18}
\draw [line width=0.28mm] (\from) -- (\to);

\foreach \from/\to in {s20/s21, s21/s22, s22/s23, s23/s24, s24/s25, s25/s26,s26/s27,s27/s28,s28/s29}
\draw [line width=0.28mm] (\from) -- (\to);

\foreach \from/\to in {s30/s31, s31/s32, s32/s33, s33/s34, s34/s35, s35/s36,s36/s37,s37/s38,s38/s39}
\draw [line width=0.28mm] (\from) -- (\to);

\foreach \from/\to in {s40/s41, s41/s42, s42/s43, s43/s44, s44/s45, s45/s46,s46/s47,s47/s48,s48/s49}
\draw [line width=0.28mm] (\from) -- (\to);

\foreach \from/\to in {s51/s53, s53/s55,s55/s57,s57/s59}
\draw [line width=0.28mm] (\from) -- (\to);

\foreach \from/\to in {s10/s20, s30/s40}
\draw [line width=0.28mm] (\from) -- (\to);

\foreach \from/\to in {s21/s31,s41/s51}
\draw [line width=0.28mm] (\from) -- (\to);

\foreach \from/\to in {s12/s22, s32/s42}
\draw [line width=0.28mm] (\from) -- (\to);

\foreach \from/\to in {s23/s33,s43/s53}
\draw [line width=0.28mm] (\from) -- (\to);

\foreach \from/\to in {s14/s24, s34/s44}
\draw [line width=0.28mm] (\from) -- (\to);

\foreach \from/\to in {s25/s35,s45/s55}
\draw [line width=0.28mm] (\from) -- (\to);

\foreach \from/\to in {s16/s26,s36/s46}
\draw [line width=0.28mm] (\from) -- (\to);

\foreach \from/\to in {s27/s37,s47/s57}
\draw [line width=0.28mm] (\from) -- (\to);

\foreach \from/\to in {s18/s28,s38/s48}
\draw [line width=0.28mm] (\from) -- (\to);

\foreach \from/\to in {s29/s39,s49/s59}
\draw [line width=0.28mm] (\from) -- (\to);

\end{tikzpicture}

\caption{$W_{5 \times 5}$}
\label{fig:5x5wall}
\end{figure}

Theorem \ref{wallminor} also holds if ``subgraph'' is replaced by ``minor'' (in that case ``subdivision'' will not be necessary anymore). Recently, there has been growing interest in understanding the unavoidable \emph{induced} subgraphs of graphs with large treewidth. For instance, subdivided walls, complete graphs, and complete bipartite graphs are easily observed to have arbitrarily large treewidth. Line graphs of subdivided walls form another family of graphs with unbounded treewidth (recall that the \emph{line graph} $L(F)$ of a graph $F$ is the graph with vertex set $E(F)$, such that two vertices of $L(F)$ are adjacent if the corresponding edges of $F$ share an end). Since these four types of graphs do not contain each other as induced subgraphs, they must all be included in any potential family of induced subgraph obstructions to bounded treewidth; hence, they are often referred to as the ``basic obstructions''.

A full characterization of the induced subgraph obstructions to bounded treewidth remains unknown. Specifically, there are various constructions \cite{every-graph-essential, tw9, tw12, deathstar, complete-bipartite-minors, layeredwheel2, davies, pohoata2014unavoidable, layered-wheels} showing that forbidding the basic obstructions as induced subgraphs does not guarantee bounded treewidth. Furthermore, a recent result of Alecu, Bonnet, Bureo Villafana, and Trotignon \cite{every-graph-essential} suggests that an induced counterpart of the Grid Theorem is unlikely; nonetheless, there remains interest in understanding the obstructions to bounded treewidth from the induced standpoint. For example, we now know that every non-basic obstruction falls in one of only two categories: those that are ``complete bipartite induced minor models'' and those that are ``linear complete minor models''. Let us define these terms precisely. Given a graph $G$, a \emph{model in\/ $G$} is a graph $H$ with vertex set $\{X_1,\ldots,X_n\}$ such that the following hold.
\begin{enumerate}[{\rm (i)}]
    \item \label{model-connected} For all $i$ with $1 \leq i \leq n$, we have $X_i \subseteq V(G)$ and $G[X_i]$ is connected.
    \item \label{model-disjoint} For all $i$ and $j$ with $1 \leq i < j \leq n$, we have $X_i \cap X_j = \varnothing$.
    \item \label{model-edges} For all $i$ and $j$ with $1 \leq i < j \leq n$, if $X_i$ and $X_j$ are adjacent in $H$, then there exist $v_i \in X_i$ and $v_j \in X_j$ such that $v_iv_j\in E(G)$.
\end{enumerate}
We say that the model $H$ in $G$ is \emph{linear} if $G[X_i]$ is a path for all $i$ with $1\leq i\leq n$, and that $H$ is \emph{induced} if the converse to (\ref{model-edges}) is also true; that is, for all $i$ and $j$ with $1 \leq i < j \leq n$, if $X_i$ and $X_j$ are not adjacent in $H$, then $X_i$ and $X_j$ are anticomplete in $G$. Note that for any choice of the subsets $\{X_1,\ldots,X_n\}$ satisfying (\ref{model-connected}) and (\ref{model-disjoint}), there is exactly one induced model in $G$ with $\{X_1,\ldots,X_n\}$ as its vertex set, which we call the \emph{model in\/ $G$ induced by\/ $\{X_1,\ldots,X_n\}$}. It is easy to observe that a graph $F$ is a minor of $G$ if and only if $F$ is isomorphic to a model in $G$, and $F$ is an induced minor of $G$ if and only if $F$ is isomorphic to an induced model in $G$.

In \cite{complete-bipartite-minors}, three of us proved the following.

\begin{theorem}[Chudnovsky, Hajebi, Spirkl \cite{complete-bipartite-minors}]\label{thm:comp_model_general}
    For all\/ $r,s,t\in \poi$, there is a constant\/ $c=c(r,s,t)\in \poi$ such that for every graph\/ $G$ with\/ $\tw(G)>c$, one of the following holds.
    \begin{itemize}
        \item There is an induced subgraph of\/ $G$ isomorphic to one of\/ $K_{r+1}$, $K_{r,r}$, some subdivision of\/ $W_{r\times r}$ or the line graph of some subdivision of\/ $W_{r\times r}$.
        \item There is an induced model in\/ $G$ isomorphic to\/ $K_{s,s}$.
        \item There is a linear model in\/ $G$ isomorphic to\/ $K_t$.
    \end{itemize}
\end{theorem}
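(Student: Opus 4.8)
The plan is to derive this from the Grid Theorem (Theorem~\ref{wallminor}) by a ``cleaning'' analysis of a large subdivided wall that is forced to appear in $G$ as a subgraph.

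\textbf{Reduction and the Grid Theorem.} Fix $r,s,t\in\poi$, and let $c=c(r,s,t)$ be a suitably large value to be pinned down at the end. We may assume that $G$ has no induced subgraph isomorphic to $K_{r+1}$, to $K_{r,r}$, to a subdivision of $W_{r\times r}$, or to the line graph of such a subdivision, and no induced model isomorphic to $K_{s,s}$; under these assumptions the task is to exhibit a linear model isomorphic to $K_t$. In particular $\omega(G)\le r$. Since $\tw(G)>c$, Theorem~\ref{wallminor} provides a subgraph $H$ of $G$ that is a subdivision of $W_{N\times N}$ with $N$ as large as we like (this is where $c$ must be taken large). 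Call the degree-$3$ vertices of $H$ its \emph{branch vertices} and the maximal subpaths of $H$ all of whose internal vertices have degree $2$ in $H$ its \emph{legs}; there are $\Theta(N^2)$ legs, organized in the grid-like pattern of the wall.

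\textbf{The main dichotomy.} The heart of the proof is to control how the rest of $G$ meets $H$: the chords of $H$ in $G$, together with the components of $G-V(H)$ and their neighbourhoods on $H$. Assign to each vertex $v\in V(G)$ (and to each component of $G-V(H)$) its \emph{trace}, the set of legs on which it has a neighbour, and run several rounds of Ramsey-type pigeonholing over legs and over components. The outcome should be a dichotomy. On one side, the pigeonholing passes to a large sub-wall on which every vertex and every outside component has a \emph{local} trace, confined to boundedly many nearby legs; a further round of cleaning then extracts an induced subdivision of a large wall from it, or -- when the uncleanable attachments instead cluster near branch vertices, mutually joining the incident legs in the triangulating pattern -- an induced line graph of one, either way contradicting our hypotheses. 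On the other side, the pigeonholing exposes, after passing to a sub-wall, one of a short explicit list of \emph{rich} configurations: an independent set of $s$ vertices (or a family of components) whose traces all contain the same $s$ pairwise far apart legs; or a single leg, or a single row, carrying chords dense enough that it splits into $t$ induced subpaths every two of which are joined by an edge; or $t$ pairwise far apart chordless legs that are pairwise joined by edges. It is precisely $\omega(G)\le r$ and the absence of an induced $K_{r,r}$ and of an induced $K_{s,s}$-model that keep the rich configurations confined to this list -- they force the witnessing substructures to be path-like rather than fanning out into a forbidden induced minor.

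\textbf{Extracting the conclusion.} From a rich configuration one reads off the answer. In the ``aligned traces'' case one in fact builds an induced model of $K_{s,s}$: put the $s$ independent hubs as the singleton branch sets on one side and take $s$ pairwise anticomplete connected pieces cut out of the $s$ common far legs on the other side, noting that every hub has a neighbour in every such leg and that far-apart legs are pairwise anticomplete after cleaning. This contradicts the hypothesis, so this case does not arise, and every remaining rich configuration is ``clique-like''. In each of those we select $t$ pairwise far apart induced subpaths $P_1,\dots,P_t$ of $H$, each containing a prescribed endpoint of each relevant joining edge; since the subpaths lie in distinct legs (or in disjoint intervals of one leg) they are vertex-disjoint and each $G[P_i]$ is a path, and by construction every pair $P_i,P_j$ is joined by an edge. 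Using that $H$, being a wall, admits many vertex-disjoint connections between any prescribed regions, the comparatively scarce cross edges can be distributed so as to cover all pairs $\{i,j\}$. The result is a linear model of $K_t$ in $G$, as required, and it only remains to trace back the parameter bounds to obtain an admissible $c$.

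\textbf{The main difficulty.} The genuinely hard part is the dichotomy: pinning down the finite list of rich configurations, proving that everything off the list can be cleaned away to leave an induced subdivided wall or its line graph, and tracking the parameter loss so that each successive sub-wall stays large enough. Two points demand care. First, separating the ``line graph of a wall'' outcome from the ``wall'' outcome, which hinges on whether the uncleanable attachments cluster near branch vertices or run across the wall; identifying the triangulating pattern as exactly a line graph of a smaller subdivided wall is the delicate piece. Second, making the Ramsey arguments uniform, so that a single pruned sub-wall is clean simultaneously for all vertices and all outside components, rather than one at a time -- this is what dictates the order and the strength of the successive pigeonholing rounds.
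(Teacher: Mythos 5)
The paper does not prove Theorem~\ref{thm:comp_model_general}; it cites it as a black box from \cite{complete-bipartite-minors}, where its proof occupies a substantial portion of that paper and relies on the machinery developed across the earlier entries of the ``Induced subgraphs and tree decompositions'' series. So there is no in-paper proof to compare your attempt against; what you have written is a proposal for the proof of the external result.

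As such a proposal, your outline is plausible at the level of folklore strategy (start from the Grid Theorem, analyze attachments to a subdivided wall via traces, pigeonhole, then clean or find rich structure), but it does not constitute a proof: you explicitly defer the central dichotomy, and two of the steps you do describe have genuine gaps as stated. First, the ``aligned traces $\Rightarrow$ induced $K_{s,s}$ model'' step is not as clean as you suggest: an induced model requires the $s+s$ branch sets to be pairwise anticomplete except along the prescribed bipartite edges, and $s$ hubs with large traces plus $s$ common far-apart legs generically violate this (the hubs may be adjacent to each other, and pieces cut from legs may see other hubs or each other through chords); you would need an additional cleaning round to certify anticompleteness, and that is exactly where the work lies. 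Second, the extraction of the linear $K_t$ model is underspecified to the point of being incorrect as written. You say you ``select $t$ pairwise far apart induced subpaths $P_1,\dots,P_t$ of $H$\dots by construction every pair $P_i,P_j$ is joined by an edge,'' and then invoke disjoint wall connections to distribute ``scarce cross edges'' over missing pairs. But if the chosen paths are pairwise far apart and you want to witness an edge of $K_t$ between two of them by routing through the wall, that routing must be absorbed into a branch set, and then that branch set is no longer a single leg-subpath; keeping it an induced path while also remaining anticomplete to the other $t-1$ branch sets (except for the required adjacency) is a delicate planarity/laminarity argument, not an afterthought. In short, the hard parts you flag as hard really are the entire theorem, and the proposal does not supply them.
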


Moreover, the main result of \cite{complete-bipartite-minors} characterizes the unavoidable induced subgraphs of graphs containing a large complete bipartite induced minor. Therefore, in order to establish a full analog of Theorem~\ref{wallminor} for induced subgraphs, it remains to understand those obstructions that are linear complete models which do not contain basic obstructions of large treewidth as induced subgraphs, and do not contain large complete bipartite graphs as induced minors.

We call these obstructions \emph{layered-wheel-like}, and the naming is explained by the fact that the only known examples of such obstructions are (variations of) the so-called \emph{layered wheels}. Indeed, there are several constructions of layered wheels, by Sintiari and Trotignon \cite{layered-wheels}, by Chudnovsky and Trotignon \cite{layeredwheel2} and by Alecu, Bonnet, Villafana and Trotignon \cite{every-graph-essential}. But they are all quite intricate. In this paper, we present a new construction of layered-wheel-like graphs with arbitrarily large treewidth. Our construction is substantially simpler than all previous ones and simultaneously achieves several key properties, no two of which are attained individually by any earlier construction. Explicitly, our main result is as follows.

\begin{theorem}\label{main}
    There exist\/ $L,t_0\in \poi$ such that for all\/ $g,k\in \poi$, there is a graph\/ $G_k^g$ with the following properties.
    \begin{enumerate}[{\rm (i)}]
        \item \label{main-1} There is a linear model in\/ $G^g_k$ isomorphic to\/ $K_k$. In particular, $\tw(G_k^g)\geq k-1$.
        \item \label{main-2} $G^g_k$ does not contain\/ $W_{t_0 \times t_0}$ or\/ $K_{t_0,t_0}$ as an induced minor.
        \item \label{main-3} If\/ $u,v\in V(G^g_k)$ both have degree at least four in\/ $G^g_k$ and\/ $u \neq v$, then\/ $\dist_{G^g_k}(u,v) \geq 2^g$.
        \item \label{main-4} $G^g_k$ has girth at least\/ $g$.
        \item \label{main-5} If\/ $H$ is an induced subgraph of\/ $G^g_k$ and\/ $H$ is an outerstring graph, then\/ $\tw(H) \leq L$.
    \end{enumerate}
\end{theorem}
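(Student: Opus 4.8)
Put $r:=2^g$, fix any $N\ge (k+1)r$, and take pairwise vertex‑disjoint paths $B_1,\dots,B_k$, each on exactly $N$ vertices. On each $B_\ell$ fix $k$ ``slots'' $s_\ell^0,s_\ell^1,\dots,s_\ell^{k-1}$, occurring in this order along $B_\ell$ and pairwise at distance at least $r$ along $B_\ell$ (possible since $N\ge kr$). For $2\le\ell\le k$ call $c_\ell:=s_\ell^0$ the \emph{hub} of $B_\ell$, and for $1\le j<\ell$ set $u_j^{(\ell)}:=s_j^{\,\ell-j}$. Let $G^g_k$ have vertex set $\bigcup_\ell V(B_\ell)$ and edge set $\bigcup_\ell E(B_\ell)\cup\{c_\ell u_j^{(\ell)}:2\le\ell\le k,\ 1\le j<\ell\}$. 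Then no edge of $G^g_k$ has both ends on a common $B_\ell$ beyond the edges of $B_\ell$; the vertices of degree at least three are exactly the hubs $c_2,\dots,c_k$ (of degrees $3,\dots,k+1$) and the attachment points $u_j^{(\ell)}$ (of degree $3$), all of which are slots; and $G^g_k$ with all hubs deleted is a linear forest.

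\textbf{Properties (i), (iii), (iv).}
Each $B_\ell$ induces a path (it has no chords), the $B_\ell$ are disjoint, and $B_i,B_j$ with $i<j$ are joined by $c_ju_i^{(j)}$; hence $\{B_1,\dots,B_k\}$ is a linear model of $K_k$ in $G^g_k$, so $\tw(G^g_k)\ge\tw(K_k)=k-1$, giving (i). For (iii): the vertices of degree at least four are $c_3,\dots,c_k$, and one checks — using that any two slots on a common $B_\ell$ are at distance at least $r$ along $B_\ell$, and that every walk in $G^g_k$ passing from one $B_\ell$ to another must traverse a full inter‑slot segment of some $B_m$ — that any two hubs are at distance at least $r=2^g$ in $G^g_k$. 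For (iv): every cycle of $G^g_k$ uses at least one edge $c_\ell u_j^{(\ell)}$ (else it lies in the linear forest $\bigcup_\ell B_\ell$), and in fact it must use such edges at two distinct hubs, since a cycle meeting only the hub $c_\ell$ would, after leaving $c_\ell$ along two of its edges into distinct $B_j$'s, have to return from one to another without revisiting $c_\ell$, forcing another hub; so every cycle contains two distinct hubs and hence has length at least $2\cdot 2^g\ge g$.

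\textbf{Property (ii) — the main work.}
We prove the stronger claim that for an absolute constant $t_0$ the graph $G^g_k$ contains neither $W_{t_0\times t_0}$ nor $K_{t_0,t_0}$ as an induced minor, for all $g,k$. Let $\{X_1,\dots,X_m\}$ be an induced model of $H$ in $G^g_k$. Since $G^g_k$ minus the hubs is a linear forest, every branch set avoiding all hubs is an induced subpath of a single $B_\ell$; call the other branch sets \emph{heavy}. Two hub‑free branch sets inside the same $B_\ell$ are $H$‑adjacent only if they are consecutive subpaths, and two hub‑free branch sets lying in different $B_\ell$'s are never $H$‑adjacent (every edge between distinct $B_\ell$'s is incident to a hub). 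Hence deleting the heavy branch sets from $H$ leaves a graph of treewidth at most one, so $\tw(H)\le(\#\text{heavy})+1\le k$. The point — and this is where the chosen slot arrangement enters, and where I expect essentially all the difficulty of the paper to lie — is to improve this to an \emph{absolute} bound when $H$ is $W_{t_0\times t_0}$ or $K_{t_0,t_0}$. The mechanism is that a heavy branch set $X_r\ni c_\ell$ with bounded degree in $H$ can involve only boundedly many of the attachment points $u_1^{(\ell)},\dots,u_{\ell-1}^{(\ell)}$ without either sending many edges to distinct branch sets or absorbing long, widely separated stretches of many different $B_j$'s; a careful analysis of how heavy branch sets can be laid out along the $B_\ell$'s (using that the $u_j^{(\ell)}$ occupy increasingly remote slots) should then show that the heavy branch sets cannot be arranged so as to realise the high local connectivity packed into a large wall or a large complete bipartite graph — forcing $t_0$ to be bounded. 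Making this precise is the heart of the argument.

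\textbf{Property (v).}
Let $H$ be an induced subgraph of $G^g_k$ that is an outerstring graph, and let $L$ be a sufficiently large absolute constant; suppose $\tw(H)>L$. By Theorem~\ref{thm:comp_model_general} (applied to $H$ with parameters absolute and at least $t_0$), one of its three outcomes holds. The first is impossible: $G^g_k$ is triangle‑free (girth at least $g$), which kills the $K_{t_0+1}$ and the line‑graph‑of‑subdivided‑wall options, while an induced subdivided $W_{t_0\times t_0}$ contracts to $W_{t_0\times t_0}$ as an induced minor and an induced $K_{t_0,t_0}$ is a $K_{t_0,t_0}$ induced minor — both contradicting (ii). The second outcome, a $K_{t_0,t_0}$ induced minor of $H$, is a $K_{t_0,t_0}$ induced minor of $G^g_k$, again contradicting (ii). So $H$ has a linear model of $K_{t_0}$. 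Its branch paths are induced paths of $G^g_k$ joined in pairs by single edges; arguing as in (ii), at most two of them lie in any one $B_\ell$, so the model activates $\Omega(t_0)$ hubs, each attached to $\Omega(t_0)$ pairwise far, mutually ``unlinked'' vertices occurring in the model. The final step — genuinely requiring that $H$ is an outerstring graph, and topological/extremal in flavour — is to show that no representation of $H$ by strings anchored on the boundary of a disk can accommodate such a configuration once $t_0$ is large; this contradiction gives $\tw(H)\le L$ and finishes the proof. I expect the two flagged steps (the induced‑minor analysis for (ii), and the outerstring obstruction for the $K_{t_0}$‑model case of (v)) to be the substantive content, the rest being bookkeeping.
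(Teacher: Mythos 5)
Your construction is not the one used in the paper, and, more importantly, it does not satisfy property (\ref{main-2}): it contains $K_{m,m}$ as an induced minor for $m$ roughly $k/2$, so no absolute $t_0$ works. Concretely, for $m\le k/2$, take the singleton branch sets $\{c_{k-m+1}\},\dots,\{c_k\}$ together with the branch sets $X_j := B_j\bigl[\,s_j^{\,k-m+1-j}\;:\;s_j^{\,k-j}\,\bigr]$ for $j=1,\dots,m$. Each $X_j$ is a subpath of $B_j$ whose only cross-edges go to the hubs $c_{k-m+1},\dots,c_k$ (via the attachment points $u_j^{(i)}=s_j^{\,i-j}$ for $i=k-m+1,\dots,k$, which lie in $X_j$ and nowhere else); the hubs are pairwise non-adjacent; the $X_j$ are pairwise anticomplete; and $c_j=s_j^0$ is not in $X_j$. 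This is an induced model of $K_{m,m}$. The reason this happens is structural: you hang each hub $c_\ell$ off slots $s_j^{\,\ell-j}$ whose positions shift monotonically with $\ell$, so a fixed window of consecutive slots on each $B_j$ is simultaneously adjacent to a whole block of consecutive hubs, which is exactly the ``high local connectivity'' pattern $K_{m,m}$ needs. The paper avoids this by attaching each big vertex $P_i^x$ to the \emph{same} index $x$ on every lower layer, with the dyadic spacing $x=b\cdot 2^{k-i+g}$ ($b$ odd) forcing a nested interval structure; this is what makes the ``contract hub-to-medium edges'' graph series-parallel, which drives both the wall-exclusion and the $K_{t,t}$-exclusion (the latter via the bound of at most seven internally anticomplete paths between any two vertices, hence no wide theta of width $8$).

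Even setting the construction aside, the two places you explicitly flag as ``the heart of the argument'' (the induced-minor exclusion in (\ref{main-2}) and the outerstring step in (\ref{main-5})) are not proved in your proposal, and for (\ref{main-5}) your route differs from the paper's. The paper does not directly argue about string representations of a large-$K_t$ model. Instead it uses two facts: that $\ell$-long thetas are not outerstring graphs for $\ell\ge 4$, and the balanced-separator characterization of treewidth. Starting from the series-parallel contraction $H'$ of an induced subgraph $H$, it extracts a small balanced separator $K'$, and then analyzes a further induced minor $H''$ whose only possible failure to have bounded degree produces a long induced theta in $H$ (via a three-paths lemma), contradicting outerstringness; bounded degree plus wall-exclusion then bounds $\tw(H'')$, which translates back into a uniformly small balanced separator for $H$. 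If you want to pursue your own line, the long-theta obstruction is the concrete tool you are missing; but you would first need to replace your construction with one (such as the paper's dyadic one) that actually excludes large complete bipartite induced minors.
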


No preexisting construction of layered wheels satisfies (\ref{main-3}) (and in fact they all contain every tree as induced subgraphs). Moreover, only one construction, namely the ``theta-free'' one \cite{layered-wheels}, allows for arbitrarily large girth, and the most recent construction of layered wheels \cite{every-graph-essential} (which is also the most complicated one) is the only one that achieves (\ref{main-5}) -- in particular, no other construction satisfies (\ref{main-4}) and (\ref{main-5}) at once.

Property (\ref{main-5}) is closely related to the following conjecture of Trotignon \cite{conjecture}. A \emph{string representation} of a graph $G$ is an assignment of the vertices of $G$ to curves in the plane such that the curves corresponding to two vertices $u,v$ intersect if and only if $uv \in E(G)$. An \emph{outerstring representation} of $G$ is a string representation of $G$ in which all curves lie in the upper half of the plane and each curve has exactly one endpoint on the $x$-axis. A graph $G$ is a \emph{string graph} if it has a string representation, and it is an \emph{outerstring graph} if it has an outerstring representation.

\begin{conjecture}[Trotignon \cite{conjecture}] \label{original-conjecture}
    For all\/ $r,t\in \poi$, there exists\/ $c = c(r,t)\in \poi$ such that for every graph\/ $G$, if\/ $G$ does not contain\/ $W_{t \times t}$ or\/ $K_{t,t}$ as an induced minor, and every induced subgraph\/ $H$ of\/ $G$ that is an outerstring graph satisfies $\tw(H) \leq r$, then\/ $\tw(G) \leq c$.
\end{conjecture}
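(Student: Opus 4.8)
The plan is to derive the conjecture from Theorem~\ref{thm:comp_model_general}. Given $r,t\in\poi$, pick integers $r'=r'(r,t)$, $s'=s'(t)$, and $t'=t'(r,t)$, all large enough for the estimates below, and set $c=c(r',s',t')$, the constant from Theorem~\ref{thm:comp_model_general}. Let $G$ be a graph with $\tw(G)>c$ that contains no $W_{t\times t}$ or $K_{t,t}$ induced minor and in which every outerstring induced subgraph has treewidth at most~$r$; we must reach a contradiction. By Theorem~\ref{thm:comp_model_general} at least one of its three conclusions holds for $G$, and the entire argument consists of ruling each of them out.

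\textbf{The basic-obstruction and $K_{s,s}$-model conclusions.}
The first conclusion yields an induced subgraph $J$ of $G$ isomorphic to $K_{r'+1}$, to $K_{r',r'}$, to a subdivision of $W_{r'\times r'}$, or to the line graph of a subdivision of $W_{r'\times r'}$. If $J\cong K_{r'+1}$: every complete graph is an outerstring graph (represent the $i$-th vertex by a vertical segment of height $h_i$ over $(i,0)$ followed by a horizontal segment at height $h_i$, where $h_1>\dots>h_{r'+1}>0$, so that any two curves cross), so with $r'\ge r+1$ this contradicts $\tw(K_{r'+1})=r'>r$ and the outerstring hypothesis. If $J\cong K_{r',r'}$ with $r'\ge t$, then $K_{t,t}$ is an induced subgraph, hence an induced minor, of $G$ — contradiction. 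If $J$ is a subdivision of $W_{r'\times r'}$, then contracting the subdivision paths and passing to a sub-wall exhibits $W_{t\times t}$ as an induced minor of $G$ once $r'$ is large in terms of $t$ — contradiction. If $J$ is the line graph of a subdivision $F$ of $W_{r'\times r'}$, a similar short argument applies: for $r'$ large in terms of $t$ one finds inside $F$ an induced subgraph that is a subdivision $F'$ of $W_{t\times t}$ with every edge subdivided at least once, so that the branch-vertex triangles of $L(F')\subseteq L(F)$ are pairwise disjoint, and contracting them yields $W_{t\times t}$ as an induced minor — contradiction. Finally, the second conclusion of Theorem~\ref{thm:comp_model_general} gives a $K_{s',s'}$ induced minor of $G$, which for $s'\ge t$ contradicts the hypothesis outright.

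\textbf{The linear-clique conclusion.}
This leaves the third conclusion: $G$ carries a linear model isomorphic to $K_{t'}$, i.e.\ pairwise vertex-disjoint paths $P_1,\dots,P_{t'}$ with $G[V(P_i)]$ a path and $V(P_i)$ adjacent to $V(P_j)$ for all $i\ne j$. Here the plan is structural. Using that $G$ has no large $K_{t,t}$ induced minor, one would control, for each pair $i<j$, the edges between $P_i$ and $P_j$, and then pass to long sub-paths and a sub-collection of the $P_i$'s until the attachment pattern between the surviving paths is ``monotone'' — arranged in a controlled left-to-right order along the paths. From such a normalised configuration one would hope to read off a large induced subgraph that is either visibly an outerstring graph of treewidth exceeding $r$ (a long nested ``sausage'' of the $P_i$'s, which admits an outerstring representation directly) or a large subdivided wall, returning us to the previous case.

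\textbf{The main obstacle.}
This last step is where all the difficulty is concentrated; it is essentially a restatement of the conjecture. Since a linear $K_{t'}$ model already forces $\tw(G)\ge t'-1$, disposing of this case means proving that \emph{no} graph can simultaneously carry a large linear clique model, avoid $W_{t\times t}$ and $K_{t,t}$ as induced minors, and have only bounded-treewidth outerstring induced subgraphs. The difficulty is that forbidding $K_{t,t}$ as an induced \emph{minor} (rather than as an induced subgraph) is a weak constraint — the paths $P_i$ may still attach to one another along long intervals — and forbidding large walls as induced minors does not obviously prevent the attachment pattern of the $P_i$'s from encoding a ``grid-like'' gadget that is neither a wall nor an outerstring graph. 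The known constructions of layered wheels \cite{layered-wheels} are built from exactly such linear clique models, so any proof of the conjecture would have to rule out every such ``layered-wheel-like'' configuration — equivalently, to show that a large linear clique model together with the excluded induced minors always produces a large-treewidth outerstring induced subgraph — and it is far from clear that this is possible; this is the main obstacle.
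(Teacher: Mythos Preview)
The statement you are attempting to prove is \emph{false}; the paper does not prove Conjecture~\ref{original-conjecture} but refutes it. As the paper says explicitly just after the conjecture, the construction from \cite{every-graph-essential} already disproves it, and the main result of the present paper (Theorem~\ref{main}) gives a second, simpler family of counterexamples: for fixed $t_0$ and $L$, the graphs $G_k^g$ have treewidth at least $k-1$, contain no $W_{t_0\times t_0}$ or $K_{t_0,t_0}$ as an induced minor, and yet every outerstring induced subgraph has treewidth at most $L$. Taking $t=t_0$ and $r=L$, this shows no constant $c(r,t)$ can exist.

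To your credit, your analysis locates the failure point precisely. The first two outcomes of Theorem~\ref{thm:comp_model_general} can indeed be eliminated as you describe, and you are right that the entire content of the conjecture collapses onto the linear $K_{t'}$-model outcome. But your ``main obstacle'' is not a technical gap to be closed; it is a genuine impossibility. The graphs $G_k^g$ of Construction~\ref{themainconstruction} are exactly linear $K_k$-models (the paths $P_1,\dots,P_k$), they exclude large walls and large complete bipartite graphs as induced minors (Lemmas~\ref{no-wall-im} and~\ref{no-bipartite-im}), and Lemma~\ref{outerstring-isg-bound} shows that every outerstring induced subgraph nonetheless has bounded treewidth. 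So the hoped-for dichotomy in your final paragraph --- that a normalised linear clique model must yield either a large-treewidth outerstring subgraph or a large subdivided wall --- is simply false. The attachment pattern of the $P_i$'s in $G_k^g$ is already perfectly monotone (vertex $P_i^x$ attaches to $P_j^x$), yet neither alternative materialises.
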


This conjecture is already refuted by the construction from \cite{every-graph-essential}. Our main construction provides a different counterexample to Conjecture \ref{original-conjecture}, which has the advantage of being much simpler in structure, having arbitrarily large girth, and not containing all trees as induced subgraphs. In fact, our proof gives a slightly stronger property than (\ref{main-5}); see Theorem~\ref{stronger}.

\subsection{Definitions and notation} \label{defs}

We write $\poi$ for the set of positive integers. Let $G$ be a graph. An \emph{induced subgraph of\/ $G$} is a graph $H$ obtained from $G$ by deleting vertices. For $X \subseteq V(G)$, we let $G[X]$ denote the induced subgraph of $G$ with vertex set $X$, and $G \setminus X$ denotes $G[V(G) \setminus X]$. We often use $X$ to denote both the set $X$ of vertices and the induced subgraph $G[X]$. We say that two subsets $X,Y \subseteq V(G)$ are \emph{anticomplete} if $X \cap Y = \varnothing$ and there is no edge of $G$ with an end in $X$ and an end in $Y$. For $v \in V(G)$, we let $N_G(v)$ denote the set of all vertices in $G$ adjacent to $v$, and we write $N_G[v]=N_G(v) \cup \{v\}$. The \emph{degree of\/ $v$ in\/ $G$}, denoted by $\deg_G(v)$, is $|N_G(v)|$. The \emph{maximum degree of\/ $G$} is $\max_{v \in V(G)} \deg_G(v)$, and the \emph{minimum degree of\/ $G$} is $\min_{v \in V(G)} \deg_G(v)$. For $X \subseteq V(G)$, we let $N_G(X)$ denote the set of all vertices in $G\setminus X$ with at least one neighbor in $X$, and we define $N_G[X]=N_G(X) \cup X$.

A \emph{path} $P$ is a graph with vertex set $\{v_1,\ldots,v_n\}$ and edge set $\{v_iv_{i+1}\colon 1 \leq i \leq n-1 \}$. Such a path is denoted by $v_1 \dd v_2 \dd \cdots \dd v_{n-1} \dd v_n$. We say that $v_1$ and $v_n$ are the \emph{ends} of $P$, and that $P$ is a \emph{path from\/ $v_1$ to\/ $v_n$}. The \emph{interior of\/ $P$}, denoted by $P^*$, is $P \setminus \{v_1,v_n\}$. The \emph{length} of $P$ is given by $|E(P)| = n - 1$. A \emph{path in\/ $G$ from\/ $u$ to\/ $v$} is an induced subgraph of $G$ that is a path from $u$ to $v$. If $G$ is connected, the \emph{distance from\/ $u$ to\/ $v$ in\/ $G$}, denoted by $\dist_G(u,v)$, is the length of the shortest path from $u$ to $v$ in $G$. For $u,v \in P$, the \emph{subpath of\/ $P$ from\/ $u$ to\/ $v$} is the (unique) path in $P$ with ends $u$ and $v$. Two paths $P_1$ and $P_2$ in $G$ are \emph{internally anticomplete} if $P_1^*$ and $P_2^*$ are anticomplete.

The $k$-vertex cycle $C_k$ is obtained from a path on $k$ vertices by adding an edge between its two ends. The \emph{girth} of a graph $G$ is the smallest $k$ such that $G$ contains $C_k$ as an (induced) subgraph (and is infinite if $G$ does not contain any cycles). A \emph{hole in\/ $G$} is an induced subgraph of $G$ isomorphic to $C_k$ for some $k \geq 4$.

\section{The main construction} \label{sec:construction}

In this section, we give the description of the graph family that proves Theorem~\ref{main}.

\begin{construction}
    \label{themainconstruction}
    For\/ $g,k \in \poi$, let\/ $G_k^g$ be the graph constructed as follows.
    \begin{enumerate}[{\rm (i)}]
        \item $V(G_k^g)$ is partitioned into\/ $k$ paths\/ $P_1,\ldots,P_k$ of length\/ $2^{k+g}$, where\/ $P_i=P_i^0\dd \cdots\dd P_i^{2^{k+g}}$ for each\/ $i$ with\/ $1 \leq i \leq k$.
        \item For all\/ $i$ and\/ $j$ with\/ $1 \leq i < j \leq k$, the vertices\/ $P_i^x$ and\/ $P_j^y$ are adjacent in\/ $G^g_k$ if\/ $x = y = b \cdot 2^{k-i+g}$ for some odd integer\/ $b$; namely, $b \in \{1,3,\ldots,2^i - 1\}$.
    \end{enumerate}
\end{construction}

See Figure \ref{fig:G34}. We say that the vertex $P_\ell^x \in V(G_k^g)$ has \emph{layer\/ $\ell$} and \emph{index\/ $x$}. For $1 \leq \ell \leq k$ and $0 \leq x_1,x_2 \leq 2^{k+g}$, we let $P_\ell[x_1 : x_2]$ denote the subpath of $P_\ell$ with ends $P_\ell^{x_1}$ and $P_\ell^{x_2}$. For the purposes of analysis, we partition $V(G_k^g)$ into three sets: \emph{big}, \emph{medium}, and \emph{small vertices}; where a vertex $P_i^x \in V(G_k^g)$ is \emph{big} if $x = b \cdot 2^{k - i + g}$ for some odd $b$, \emph{medium} if $x = b \cdot 2^{k - j + g}$ for some odd $b$ and some $1 \leq j < i$, and \emph{small} otherwise. The sets of big, medium, and small vertices of $G_k^g$ are denoted by $B(G_k^g)$, $M(G_k^g)$, and $S(G_k^g)$, respectively. We note that all big vertices in $V(G_k^g) \setminus V(P_k)$ have degree at least $3$, all medium vertices have degree exactly $3$, and all small vertices have degree at most $2$.

\begin{figure}[t!]
\centering

\begin{tikzpicture}[scale=1,auto=left, line width=0.28mm]
\tikzstyle{every node}=[inner sep=1.2pt, fill=black,circle,draw]
\centering

    \draw (1, 1) node[draw=none,fill=none,left] {$P_5^0$} -- (12, 1) node[draw=none,fill=none,right] {$P_5^{64}$};
    \draw (1, 2) node[draw=none,fill=none,left] {$P_4^0$} -- (12, 2) node[draw=none,fill=none,right] {$P_4^{64}$};
    \draw (1, 3) node[draw=none,fill=none,left] {$P_3^0$} -- (12, 3) node[draw=none,fill=none,right] {$P_3^{64}$};
    \draw (1, 4) node[draw=none,fill=none,left] {$P_2^0$} -- (12, 4) node[draw=none,fill=none,right] {$P_2^{64}$};
    \draw (1, 5) node[draw=none,fill=none,left] {$P_1^0$} -- (12, 5) node[draw=none,fill=none,right] {$P_1^{64}$};

    \foreach \i in {1,...,5} {
        \node() at (6.5, \i) {};
        \node() at (1,\i) {};
        \node at (12,\i) {};
    }
    \draw (6.5, 5) edge[line width=0.28mm, line width=0.28mm, out = -90, in = 90] (6.5, 4);
    \draw (6.5, 5) edge[line width=0.28mm, line width=0.28mm, out = -105, in = 105] (6.5, 3);
    \draw (6.5, 5) edge[line width=0.28mm, out = -112, in = 112] (6.5, 2);
    \draw (6.5, 5) edge[line width=0.28mm, out = -120, in = 120] (6.5, 1);

    \foreach \i in {1,...,4} {
        \node() at (3.75,\i) {};
        \node() at (9.25,\i) {};
    }
    \draw (3.75, 4) edge[line width=0.28mm, out = -90, in = 90] (3.75, 3);
    \draw (3.75, 4) edge[line width=0.28mm, out = -105, in = 105] (3.75, 2);
    \draw (3.75, 4) edge[line width=0.28mm, out = -112, in = 112] (3.75, 1);
    \draw (9.25, 4) edge[line width=0.28mm, out = -90, in = 90] (9.25, 3);
    \draw (9.25, 4) edge[line width=0.28mm, out = -105, in = 105] (9.25, 2);
    \draw (9.25, 4) edge[line width=0.28mm, out = -112, in = 112] (9.25, 1);

    \foreach \i in {1,...,3} {
        \node() at (2.375, \i) {};
        \node() at (5.125,\i) {};
        \node() at (7.875,\i) {};
        \node() at (10.625,\i) {};
    }
    \draw (2.375, 3) edge[line width=0.28mm, out = -90, in = 90] (2.375, 2);
    \draw (2.375, 3) edge[line width=0.28mm, out = -112, in = 112] (2.375, 1);
    \draw (5.125, 3) edge[line width=0.28mm, out = -90, in = 90] (5.125, 2);
    \draw (5.125, 3) edge[line width=0.28mm, out = -112, in = 112] (5.125, 1);
    \draw (7.875, 3) edge[line width=0.28mm, out = -90, in = 90] (7.875, 2);
    \draw (7.875, 3) edge[line width=0.28mm, out = -112, in = 112] (7.875, 1);
    \draw (10.625, 3) edge[line width=0.28mm, out = -90, in = 90] (10.625, 2);
    \draw (10.625, 3) edge[line width=0.28mm, out = -112, in = 112] (10.625, 1);

    \foreach \i in {1,2} {
        \node() at (1.6875,\i) {};
        \node() at (3.0625,\i) {};
        \node() at (4.4375,\i) {};
        \node() at (5.8125,\i) {};
        \node() at (7.1875,\i) {};
        \node() at (8.5625,\i) {};
        \node() at (9.9375,\i) {};
        \node() at (11.3125,\i) {};
    }

    \draw(1.6875,2) edge[line width=0.28mm, out = -90, in = 90] (1.6875,1);
    \draw(3.0625,2) edge[line width=0.28mm, out = -90, in = 90] (3.0625,1);
    \draw(4.4375,2) edge[line width=0.28mm, out = -90, in = 90] (4.4375,1);
    \draw(5.8125,2) edge[line width=0.28mm, out = -90, in = 90] (5.8125,1);
    \draw(7.1875,2) edge[line width=0.28mm, out = -90, in = 90] (7.1875,1);
    \draw(8.5625,2) edge[line width=0.28mm, out = -90, in = 90] (8.5625,1);
    \draw(9.9375,2) edge[line width=0.28mm, out = -90, in = 90] (9.9375,1);
    \draw(11.3125,2) edge[line width=0.28mm, out = -90, in = 90] (11.3125,1);

\end{tikzpicture}

\caption{$G_5^1$ (internal small vertices are not drawn)}
\label{fig:G34}
\end{figure}

Some basic properties of the graph $G^g_k$ are listed below (the proofs are easy and we leave the details to the reader to check).

\begin{lemma}\label{props}
    For all\/ $g,k\in \poi$, the graph\/ $G_k^g$ has the following properties.
    \begin{enumerate}[{\rm (i)}]
        \item \label{props-6} There is a linear model in\/ $G^g_k$ isomorphic to\/ $K_k$. In particular, $\tw(G_k^g)\geq k-1$.
        \item \label{props-1} $G_k^g$ is triangle-free.
        \item \label{props-5} $G_k^g$ has girth at least\/ $g$.
        \item \label{props-2} If\/ $u,v \in V(G_k^g)$ are adjacent, then\/ $\deg_{G_k^g}(u) \leq 3$ or\/ $\deg_{G_k^g}(v) \leq 3$, and\/ $\{u,v\} \nsubseteq B(G_k^g)$.
        \item \label{props-3} If\/ $u,v \in B(G_k^g)$ with\/ $u \neq v$, then\/ $\dist_{G_k^g}(u,v) \geq 2^g$. In particular, if\/ $u, v \in V(G_k^g)$ with\/ $u \neq v$ are non-adjacent and both have degree at least\/ $4$, then\/ $\dist_{G_k^g}(u,v) \geq 2^g$.
        \item \label{props-4} If\/ $u \in V(G_k^g)$ with\/ $\deg_{G_k^g}(u) = 3$, then there is at most one\/ $v \in N_{G_k^g}(u)$ such that\/ $\deg_{G_k^g}(v) \geq 3$.
    \end{enumerate}
\end{lemma}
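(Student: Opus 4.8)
The proof turns on one structural observation that I would set up first. Every edge of $G_k^g$ is either \emph{horizontal}, of the form $P_i^xP_i^{x+1}$, or \emph{vertical}, of the form $P_i^xP_j^x$ with $i<j$ and $x=b\cdot 2^{k-i+g}$ for some odd $b$. Call such an $x$ a \emph{big index of layer $i$}. The key point is that $x$ can be a big index of at most one layer: if $x=b\cdot 2^{k-i+g}=b'\cdot 2^{k-i'+g}$ with $b,b'$ odd and $i<i'$, then $b=b'\cdot 2^{i'-i}$ is even, a contradiction (equivalently, compare $2$-adic valuations). Consequently, for a fixed index $x$ the vertical edges at $x$ share a common endpoint $P_a^x$, where $a$ is the unique layer with $x$ a big index of $a$ (if there is one) --- i.e.\ they form a star. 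The same computation yields the degrees: a small vertex has degree at most $2$; a medium vertex has degree exactly $3$ (two path-neighbours plus one vertical neighbour in a lower layer); and a big vertex $P_i^x$ has degree $2+(k-i)$ if $i<k$ and degree $2$ if $i=k$. In particular a vertex has degree at least $4$ if and only if it is big and lies in a layer $i\le k-2$. Finally, since $2^{k-i+g}\ge 2^g$ for all $i\le k$, every big index --- hence every index of an endpoint of a vertical edge --- is divisible by $2^g$, and since $g\ge 1$ every big index is even.

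Given this, most of the lemma is bookkeeping. For (\ref{props-6}) take branching sets $V(P_1),\dots,V(P_k)$: each induces a path (there are no horizontal chords), and $P_i^{2^{k-i+g}}P_j^{2^{k-i+g}}$ (use $b=1$) witnesses adjacency of $V(P_i)$ and $V(P_j)$ for $i<j$, so this is a linear model of $K_k$ and $\tw(G_k^g)\ge\tw(K_k)=k-1$ by minor-monotonicity. For (\ref{props-1}) I would show no edge $uv$ has a common neighbour $w$: a cross-layer edge forces its ends to have equal index and a horizontal edge forces indices differing by $1$, and chasing these constraints together with ``$x$ is a big index of at most one layer'' kills every configuration. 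For (\ref{props-2}): the two ends of a horizontal edge have consecutive indices, so not both even, so not both big; the upper end of a vertical edge is medium; in either case one end has degree $\le 3$ and the two are not both big. For (\ref{props-3}): two distinct big vertices have distinct indices (a shared index would force a shared layer), both divisible by $2^g$, hence differing by at least $2^g$; since a horizontal step changes an index by $1$ and a vertical step by $0$, every path between them has length at least $2^g$, and the ``in particular'' clause follows because degree $\ge 4$ forces bigness. For (\ref{props-4}): a degree-$3$ vertex $P_i^x$ is medium or big in layer $k-1$, so $x$ is even, so its two path-neighbours have odd index, hence are small of degree $\le 2$, leaving at most one neighbour (the vertical one) of degree $\ge 3$.

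The one item needing a genuine argument is the girth bound (\ref{props-5}), which I expect to be the main obstacle, though it is short. Let $C$ be a cycle. It uses a vertical edge, since otherwise it lies inside a single path $P_i$. I claim it uses vertical edges at two distinct indices. If all vertical edges of $C$ had the same index $x$, they would all lie in the star centred at $P_a^x$; a cycle meets a star in at least one and at most two edges, so deleting the centre $P_a^x$ from $C$ leaves a path with no vertical edge whose two ends lie in distinct layers --- impossible, since a walk using only horizontal edges stays inside one $P_i$. Hence $C$ has vertical edges at indices $x<x'$, both multiples of $2^g$, so $x'-x\ge 2^g$. Reading the indices around $C$ gives a closed walk in $\mathbb{Z}$ attaining both values $x$ and $x'$, each step being $0$ (vertical edge) or $\pm 1$ (horizontal edge); such a closed walk uses at least $2(x'-x)\ge 2^{g+1}$ horizontal edges, so $C$ has length at least $2^{g+1}\ge g$. (This in fact gives girth at least $2^{g+1}$, a little more than stated.)
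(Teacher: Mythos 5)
The paper omits the proof of this lemma entirely (``the proofs are easy and we leave the details to the reader to check''), so there is no argument to compare against; what matters is whether your write-up is correct, and it is. Your preliminary structural observations are exactly the right ones: the $2$-adic valuation argument shows that each index $x$ is a big index of at most one layer (so the vertical edges at a fixed index form a star centred at the unique big vertex there), and the degree classification (small $\le 2$, medium $=3$, big $P_i^x$ of degree $2+(k-i)$) together with the fact that every big index is a positive multiple of $2^g$, hence even, drives every item. Items (i), (ii), (iv), (v), (vi) follow cleanly from these facts as you describe; for (vi) it is worth noting, as you implicitly do, that a degree-$3$ vertex cannot sit at index $0$ or $2^{k+g}$, so both path-neighbours exist and both have odd index. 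Your girth argument is also correct: a cycle must contain vertical edges at two distinct indices (the star argument rules out a single index), those indices differ by at least $2^g$, and reading off indices around the cycle as a closed walk with steps in $\{-1,0,1\}$ forces at least $2\cdot 2^g$ horizontal edges, giving girth at least $2^{g+1}\ge g$ --- in fact strictly more than the paper claims, and it also subsumes item (ii) as a corollary.
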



Therefore, in order to prove Theorem~\ref{main}, it remains to show that for all $g,k\in \poi$, there is no large wall in $G^g_k$ as an induced minor, there is no large complete bipartite graph in $G^g_k$ as an induced minor, and all outerstring induced subgraphs of $G^g_k$ have small treewidth. We will prove the latter in the next three sections, and then we will complete the proof of Theorem \ref{main} in the final section.

\section{No large wall as an induced minor} \label{no-large-wall}

The main result of this section is the following lemma.

\begin{lemma} \label{no-wall-im}
    There exists\/ $h_0\in \poi$ such that for all\/ $g,k\in \poi$, the graph\/ $G_k^g$ does not contain\/ $W_{h_0 \times h_0}$ as an induced minor.
\end{lemma}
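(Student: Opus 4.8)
I will argue by contradiction: suppose $W_{h\times h}$ is an induced minor of $G_k^g$ for some large $h$, fix branch sets $\{X_v:v\in V(W_{h\times h})\}$ realizing it, and aim for a contradiction. The structural facts I plan to use are all immediate from Construction~\ref{themainconstruction} (and are essentially recorded in Lemma~\ref{props}): writing $B$ for the set of big vertices, every edge of $G_k^g$ joining distinct layers has an endpoint in $B$, so $G_k^g\setminus B$ is a linear forest; $B$ is independent; distinct big vertices are at distance $\ge 2^g\ge 2$; and, since $g\ge 1$, every big or medium vertex has only small (degree $\le 2$) neighbours along its path, so the subgraph of $G_k^g$ induced by the vertices of degree $\ge 3$ is a disjoint union of stars, each centred at a big vertex $P_i^x$ ($i<k$) with leaves exactly the medium vertices $P_{i+1}^x,\dots,P_k^x$. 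Consequently every connected induced subgraph of $G_k^g$ is obtained from a disjoint union of paths by gluing some of them at big vertices, and in particular any branch set $X_v$ that meets two different layers must contain a big vertex.

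The engine of the proof is the recursive structure of $G_k^g$ at its central column. Let $m=2^{k-1+g}$; then $c:=P_1^m$ is the \emph{unique} big vertex of layer $1$, the set $L:=\{P_1^m,\dots,P_k^m\}$ is a separator of $G_k^g$ inducing the star with centre $c$ and leaves $P_2^m,\dots,P_k^m$, and deleting $L$ leaves precisely the two induced subgraphs $G_k^g[\{x<m\}]$ and $G_k^g[\{x>m\}]$, each of which is isomorphic to $G^g_{k-1}$ together with a pendant path (the layer-$1$ stub). The crucial point about the \emph{induced} model is a dichotomy. If $c$ lies in some branch set $X_v$, then every vertex of $L\setminus\{c\}$ is adjacent to $c$, hence by induced-ness lies in $X_v$ or in one of the at most three wall-neighbours of $X_v$; so at most four branch sets meet $L$. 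If instead $c$ is used by no branch set, then the model lives in $G_k^g\setminus c$, whose only component that is not a path is $G_k^g[\text{layers }2,\dots,k]$. In either case, after deleting a bounded number of branch sets the wall model splits between the two $G^g_{k-1}$-like halves, or is pushed entirely into one of them; using that removing $O(1)$ vertices from $W_{h\times h}$ leaves a $W_{(h-O(1))\times(h-O(1))}$, and that a bounded-size separator of a large wall can isolate only a bounded piece, this yields a wall only slightly smaller than $W_{h\times h}$ as an induced minor of a graph with one fewer layer. The base case $k\le 2$ is trivial because $G_2^g$ is a tree and hence has no $W_{2\times2}$ induced minor.

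The step that needs the most care — and what I expect to be the main obstacle — is that a naive layer-by-layer application of this reduction only gives $h=O(k)$, whereas Lemma~\ref{no-wall-im} asks for $h_0$ independent of $g$ and $k$. To upgrade to an absolute bound one must control how often a \emph{costly} reduction (one in which the central star centre is genuinely used by the model) can occur: intuitively each costly reduction consumes branch sets of the wall and re-confines the remainder to strictly fewer layers, and the interplay of the far-apart property of $B$ with the disjoint-stars structure should prevent this more than boundedly many times — equivalently, a large wall would force a genuinely two-dimensional grid of ``junctions'' in $G_k^g$, which the essentially one-dimensional arrangement of big vertices along the binary hierarchy of central columns cannot accommodate. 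Making this last point precise, by tracking how a wall model can be distributed across the layers and showing it must become trivial after a bounded number of layers regardless of $k$, is where the real work of the section lies; once it is in place, the induction above closes and gives the desired absolute constant $h_0$.
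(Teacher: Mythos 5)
Your proposal contains a genuine gap that you yourself flag: the recursion on the central column $L=\{P_1^m,\dots,P_k^m\}$ gives, at best, that a $W_{h\times h}$ induced minor in $G_k^g$ yields a slightly smaller wall induced minor in a graph with one fewer layer, which only rules out $h=\Omega(k)$ rather than producing an absolute constant $h_0$. The paragraph where you assert that ``the interplay of the far-apart property of $B$ with the disjoint-stars structure should prevent this more than boundedly many times'' is exactly the missing lemma, and it is not clear it is even true in the form you state it: in the ``non-costly'' case ($c=P_1^m$ unused), the component $G_k^g[\text{layers }2,\dots,k]$ is \emph{not} isomorphic to $G_{k-1}^g$ --- its bottom layer (layer~$2$ of $G_k^g$) already carries two big vertices, not one, so the layer count drops but the big-vertex hierarchy does not reset, and the intended induction hypothesis does not apply as written. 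Without a precise statement and proof of why only boundedly many ``costly'' steps can occur, the argument does not close.

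The paper takes an entirely different and shorter route that sidesteps this recursion. It first invokes a known result (Lemma~3.6 of~\cite{stone-wall}) to reduce ``no large wall as an induced minor'' to ``no subdivision of a fixed wall as an induced subgraph'' --- the line-graph alternative is killed immediately because $G_k^g$ is triangle-free. Then, in place of your central-column separator, it contracts each big vertex together with its medium neighbours (Lemma~\ref{H'-SP}) and shows by an elementary induction on the number of big vertices (Lemma~\ref{general-series-parallel}) that the resulting graph is series-parallel, i.e.\ has treewidth at most~$2$, \emph{uniformly} in $g$ and $k$. Finally (Lemma~\ref{no9wall}), any induced subdivision of $W_{5\times5}$ contains a subdivision of $W_{3\times3}$ with branch vertices pairwise at distance at least~$3$; since the contraction only merges vertices at distance at most~$2$, this $W_{3\times3}$ survives into the contracted graph, forcing treewidth greater than~$2$, a contradiction. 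The uniform treewidth-$2$ bound on the contraction is precisely the global invariant your approach is missing, and it is what lets the constant $h_0$ be absolute rather than grow with $k$. You may want to look for an analogous global structural invariant (e.g.\ bounded treewidth of some canonical minor) rather than trying to control the recursion step by step.
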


The proof involves series-parallel graphs, which are exactly the graphs with treewidth at most $2$. The definition given here is adapted from \cite{series-parallel}. A \emph{two-terminal graph} is a graph $G$ where two distinct vertices $s,t \in V(G)$ are designated as the \emph{terminals} of $G$, where $s$ is the \emph{source} and $t$ is the \emph{sink}. If $G$ is a two-terminal graph with source $s$ and sink $t$, and $H$ is a two-terminal graph with source $s'$ and sink $t'$, then the \emph{series-composition of\/ $G$ and\/ $H$} is the two-terminal graph obtained by combining $G$ and $H$ via identifying $t$ and $s'$ into a single vertex (that is, replacing them by a single vertex with neighbor set $N_G(t) \cup N_H(s')$), and declaring $s$ as the source and $t'$ as the sink of the resulting graph. The \emph{parallel-composition of\/ $G$ and\/ $H$} is the two-terminal graph obtained by combining $G$ and $H$ via identifying $s$ and $s'$ into a single vertex and declaring that vertex as the source of the resulting graph, and identifying $t$ and $t'$ into a single vertex and declaring that vertex as the sink of the resulting graph. A two-terminal graph $G$ with source $s$ and sink $t$ is \emph{$(s,t)$-series-parallel} if $G$ can be obtained from copies of $K_2$ via a sequence of series-compositions and parallel-compositions, where each copy of $K_2$ begins with one of its vertices as the source and the other as the sink. A graph $G$ is \emph{series-parallel} if it is a subgraph of some $(s,t)$-series-parallel graph.

We now return to the proof of Lemma \ref{no-wall-im}. It is a consequence of Lemma 3.6 in \cite{stone-wall} that, for every constant $t > 0$, there is a constant $h_t > 0$ such that every graph containing $W_{h_t \times h_t}$ as an induced minor contains a subdivision of $W_{t \times t}$ or the line graph of a subdivision of $W_{t \times t}$ as an induced subgraph. Recall that for all $g,k\in \poi$, the graph $G_k^g$ is triangle-free, and so it does not contain the line graph of a subdivision of any wall. Thus, to show the existence of the desired $h_0$, it suffices to show that $G_k^g$ does not contain a subdivision of a large wall as an induced subgraph. This will be accomplished in Lemma~\ref{no9wall}, but we first need two more lemmas. Our goal is to show that for every induced subgraph of $G_k^g$, if we contract all edges not contained in $P_1, \dots, P_k$ (the vertical edges in Figure \ref{fig:G34}), then the resulting graph is series-parallel.

\begin{lemma} \label{general-series-parallel}
    Let\/ $F$ be a two-terminal graph with source\/ $s$ and sink\/ $t$, and the following specifications.
    \begin{enumerate}[{\rm (i)}]
        \item There are paths\/ $P_1,\ldots,P_k$ from\/ $s$ to\/ $t$ such that\/ $V(F) = V(P_1) \cup \ldots \cup V(P_k)$ and\/ $V(P_i^*) \cap V(P_j^*) = \varnothing$ for\/ $i \neq j$.
        \item For\/ $1 \leq i \leq k$ and every\/ $v \in P_i^*$, exactly one of the following holds.
        \begin{itemize}
            \item $N_F(v) \setminus V(P_i) = \varnothing$; we let\/ $S'(F)$ denote the set of all vertices in\/ $F$ for which this outcome holds.
            \item $i < k$, the vertex\/ $v$ has exactly one neighbor in\/ $P_j^*$ for every\/ $j$ with\/ $i < j \leq k$, and\/ $v$ has no other neighbors apart from its neighbors in\/ $P_i$; in this case, we write\/ $N_F'[v] = (N_F(v) \setminus V(P_i)) \cup \{v\}$. We let\/ $B'(F)$ denote the set of all vertices in\/ $F$ for which this outcome holds.
            \item $i > 1$, and\/ $v$ has exactly one neighbor in\/ $P_j^*$ for some\/ $j < i$, and\/ $v$ has no other neighbors outside\/ $P_i$.
        \end{itemize}
        \item Let\/ $v_1$ and\/ $v_2$ be distinct vertices in\/ $B'(F) \cap V(P_i^*)$ for some\/ $i$ with\/ $1 \leq i < k$ such that\/ $\dist_{P_i}(s,v_1) < \dist_{P_i}(s,v_2)$. Then, for every\/ $j$ with\/ $i < j \leq k$, we have\/ $\dist_{P_j}(s,u_1) < \dist_{P_j}(s,u_2)$, where\/ $u_1$ is the unique neighbor of\/ $v_1$ in\/ $P_j^*$ and\/ $u_2$ is the unique neighbor of\/ $v_2$ in\/ $P_j^*$.
    \end{enumerate}
    Then the model in\/ $F$ induced by the set
    \[ \{ N_F'[v]\colon v \in B'(F) \} \cup \{\{w\}\colon w \in S'(F)\} \cup \{\{s\},\{t\}\}, \]
    denoted by\/ $c(F)$, is an\/ $(s,t)$-series-parallel graph.
\end{lemma}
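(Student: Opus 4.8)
The natural approach is induction on $|V(F)|$ (equivalently, on the total length $\sum_i |E(P_i)|$). The base case is when every $P_i$ has length $1$, so $F$ consists of $k$ parallel edges from $s$ to $t$; then $c(F)$ is the parallel-composition of $k$ copies of $K_2$, hence $(s,t)$-series-parallel. For the inductive step, one wants to ``peel off'' a small piece of $F$ so that the remainder still satisfies (i)--(iii), apply induction, and then reattach the piece via a series- or parallel-composition. I would look at the highest-numbered path $P_k$: since vertices of $P_k^*$ can only have neighbors in some $P_j^*$ with $j < k$ (the second bullet of (ii) requires $i < k$, and the big-vertex endpoints are in lower layers), there is flexibility there. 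Concretely, let $v$ be the neighbor of $s$ on $P_k$. If $\deg_F(v) = 2$ (it lies in $S'(F)$), then $s\dd v$ is a pendant-style ``series'' piece: contract it, i.e. delete $v$ and $\{v\}$ from the model, making the neighbor of $v$ on $P_k$ the new ``first vertex'' of $P_k$ — but this shortens $P_k$ and we recurse. The cleaner bookkeeping is: treat $s\dd v$ as a $K_2$ with source $s$, sink $v$, and recognize $c(F)$ as the parallel composition of (the $K_2$-piece $s\dd v\dd(\text{rest of }P_k)$, viewed with terminals $s,t$) with $c(F - (V(P_k)\setminus\{s,t\}))$; but the first factor is itself a series composition of $K_2$'s only if $P_k^*$ has no big vertices.

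The key structural observation that makes this work is hypothesis (iii): the big vertices on a given path $P_i^*$ are linearly ordered by distance from $s$, and this order is preserved under passing to the unique neighbor on any later path $P_j^*$. This ``laminar/nested'' condition is exactly what lets one recursively split $F$ at a big vertex. So the main case of the induction is: let $i$ be the largest index such that $B'(F)\cap V(P_i^*)\neq\varnothing$ and pick the big vertex $v\in B'(F)\cap V(P_i^*)$ closest to $s$ along $P_i$. By (iii), on each later path $P_j$ ($i<j\le k$) the corresponding neighbor $u_j$ is the big vertex closest to $s$ — actually one must check $u_j$ is the $P_j$-vertex closest to $s$ among all the attachment points, which follows from the order-preservation in (iii) together with the fact that $i$ was chosen maximal. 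Then I would split $F$ at the ``vertical'' set $N'_F[v] = \{v,u_{i+1},\dots,u_k\}$ together with $s$: the part of $F$ between $s$ and this set should be a parallel composition of paths (no big vertices strictly between, by minimality of $v$ and maximality of $i$), hence series-parallel by a simpler sub-argument or by the induction hypothesis applied to a smaller instance; and the part from this set to $t$ satisfies the hypotheses of the lemma with fewer big vertices (or is smaller), so by induction $c$ of it is $(s',t)$-series-parallel where $s'$ is the contracted vertex $N'_F[v]$; finally $c(F)$ is the series-composition of these two $(s,\cdot)$- and $(\cdot,t)$-series-parallel graphs. One has to be careful that the ``left part'' genuinely has all of $s$ as a single source: the branching set $N'_F[v]$ is contracted to one vertex in $c(F)$, which becomes the shared terminal of the series composition, so this is consistent.

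I expect the main obstacle to be the bookkeeping in the inductive step: precisely cutting $F$ into two pieces each of which again satisfies specifications (i)--(iii) with the correct source/sink, and verifying that no ``stray'' adjacencies (e.g. a small vertex on $P_j^*$ with a neighbor on some $P_{j'}^*$ with $j' < j$, the third bullet of (ii)) cross the cut in a way that breaks series-parallelism. The third bullet of (ii) is the annoying one — it allows ``downward'' edges that are not recorded in the model $c(F)$ at all (only $B'(F)$ contributes branching sets, not the down-neighbors), so I'd want to argue these vanish in $c(F)$: each such vertex $v\in V(P_i^*)$ has a neighbor $u\in V(P_j^*)$ with $j<i$, and $u$ must itself be a big vertex lying in $B'(F)$ (its up-neighbor $v$ being on $P_i$), so the edge $uv$ is already accounted for by the branching set $N'_F[u]$ which contains $v$; hence $v$ itself just lies in $S'(F)$ or gets absorbed, and contributes nothing new. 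Once that is cleanly stated, the splitting goes through. An alternative to the ad hoc cut is a cleaner double induction (on $k$, then on path lengths) where one first reduces to $k$ paths with at most one ``level'' of big vertices, but I believe the single induction on $|V(F)|$ with the cut at the $s$-closest big vertex on the highest active layer is the most direct route.
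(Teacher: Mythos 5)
Your overall plan --- induct, cut $F$ at a ``vertical'' attachment set $N'_F[v]$ of one big vertex $v$, recurse on the two sides, and glue them back with a series-composition --- is the same strategy the paper uses. But you have chosen the cut vertex on the wrong end of the path hierarchy, and as stated the decomposition does not exist. You take $i$ to be the \emph{largest} index with $B'(F)\cap V(P_i^*)\ne\varnothing$ and cut at a big vertex $v\in V(P_i^*)$. The set $N'_F[v]=\{v,u_{i+1},\dots,u_k\}$ meets only $P_i,\dots,P_k$ and misses $P_1,\dots,P_{i-1}$ entirely. Nothing in the hypotheses prevents those lower paths from also carrying big vertices, and if they do, each of them still runs intact from $s$ to $t$ in $F\setminus N'_F[v]$, so this set is not a separator and there is no well-defined ``part of $F$ between $s$ and this set.'' Your justification (``no big vertices strictly between, by minimality of $v$ and maximality of $i$'') only controls $P_i$ and $P_{i+1},\dots,P_k$; it says nothing about $P_1,\dots,P_{i-1}$. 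For the same reason, the claim that the attachment $u_j$ of $v$ is the $s$-closest attachment point on $P_j$ does not follow: condition (iii) only compares attachments of big vertices lying on the \emph{same} path, while big vertices on $P_1,\dots,P_{i-1}$ can also have attachments on $P_j$.

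The paper does the opposite and this is the essential point. It first reduces to the case where $P_1$ itself carries a big vertex: if $t$ is the \emph{smallest} index with $B'(F)\cap V(P_t^*)\ne\varnothing$, then --- exactly by the observation you yourself make when discussing the third bullet of (ii), namely that any internal vertex with an upward neighbor must lie in $B'(F)$ --- the interiors of $P_1,\dots,P_{t-1}$ have no neighbors outside their own paths, so these paths are anticomplete to the rest and can be parallel-composed off. After this reduction, the big vertex $b\in B'(F)\cap V(P_1^*)$ with $\dist_{P_1}(s,b)$ minimal has a neighbor on every $P_j^*$ for $j\ge 2$, so $N'_F[b]$ meets every path and one can split and recurse; the paper measures progress by $|B'(F)|$ rather than by $|V(F)|$, but that difference is cosmetic. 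Replace ``largest'' by ``smallest'' and add the parallel-composition reduction, and your argument becomes the paper's.
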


\begin{proof}
    We proceed by induction on $|B'(F)|$. If $|B'(F)| = 0$, then $c(F)$ is isomorphic to $F$ and consists of $k$ pairwise internally anticomplete paths from $s$ to $t$, and so it is $(s,t)$-series-parallel. We now assume that $|B'(F)| > 0$. We may further assume that $P_1$ contains a vertex in $B'(F)$; if this is not the case, then we may apply the argument on $F[V(P_r) \cup \ldots \cup V(P_k)]$, where $r$ is minimal such that $P_r$ contains a vertex in $B'(F)$, and then take the parallel-composition of this graph with $P_1,\ldots,P_{r-1}$ to show that $F$ is $(s,t)$-series-parallel. Now, let $b \in B'(F) \cap V(P_1^*)$ with $\dist_{P_1}(s,b)$ minimal. Since $b$ has a neighbor in each of $P^*_2,\ldots,P^*_k$ in $F$, it follows that $F \setminus N'[b]$ has two components, one containing $s$ and the other containing $t$. Let $C_s$ and $C_t$, respectively, denote these components.

    We now observe that the model $F_s$ in $F$ induced by $\{ \{v\}\colon v \in V(C_s) \} \cup \{N'[b]\}$ satisfies the requirements of the lemma, with terminals $s$ and $N'[b]$ and the paths from $s$ to $N'[b]$ being the truncated versions of the paths $P_1,\ldots,P_k$. Clearly $B'(F_s) \subseteq B'(F)$; furthermore, this containment is strict, since $b \in B'(F)$, and $b \notin B'(F_s)$ as it is part of the terminal vertex $N_F'[b]$ in $F_s$. By induction, we find that $c(F_s)$ is $(s,N'_F[b])$-series-parallel. Similarly, $c(F_t)$ is $(N_F'[b],t)$-series-parallel, where $F_t$ is the model in $F$ induced by $\{ \{v\}\colon v \in V(C_t) \} \cup \{N_F'[b]\}$. Since $c(F)$ is the series-composition of $c(F_s)$ and $c(F_t)$, it follows that $c(F)$ is series-parallel.
\end{proof}

\begin{lemma} \label{H'-SP}
    Let\/ $g,k\in \poi$, and let\/ $H$ be an induced subgraph of\/ $G_k^g$. For every vertex\/ $b \in B(G_k^g) \cap V(H)$, let\/ $N_H^M(b) = N_H(b) \cap M(G_k^g)$ and\/ $N_H^M[b] = N_H^M(b) \cup \{b\}$. Let $B' = \{ N_H^M[b]\colon b \in B(G_k^g) \cap V(H) \}$, and let\/ $H'$ be the model in\/ $H$ induced by\/ $B' \cup \{\{v\}\colon v \in V(H),\: v \notin \bigcup_{Y \in B'} Y\}$; equivalently, $H'$ is obtained from\/ $H$ by contracting every edge\/ $e \in E(H)$ that has both ends in\/ $N_H^M[b]$ for some\/ $b \in B(G_k^g) \cap V(H)$. Then\/ $H'$ is series-parallel (in particular, $\tw(H') \leq 2$).
\end{lemma}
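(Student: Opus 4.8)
The plan is to deduce Lemma~\ref{H'-SP} from Lemma~\ref{general-series-parallel}. Given an induced subgraph $H$ of $G_k^g$, I would build a two-terminal graph $F$ that satisfies the three specifications of Lemma~\ref{general-series-parallel} and whose contraction $c(F)$ contains $H'$ as a subgraph; since series-parallel graphs are closed under taking subgraphs, this yields $\tw(H')\le\tw(c(F))\le 2$. The point of passing through an auxiliary $F$ is that $H$ itself fails the hypotheses of Lemma~\ref{general-series-parallel}: its layer-paths are broken by deletions, it has no common source or sink, and a big vertex of $H$ may have lost some of the medium neighbors it needs in order to connect down to every lower layer.

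To construct $F$: recall the adjacency structure of $G_k^g$ --- a big vertex $P_i^x$ with $i<k$ is adjacent outside $P_i$ to exactly $P_j^x$ for each $j$ with $i<j\le k$; a medium vertex is adjacent outside its own path to exactly one vertex, which lies in a strictly higher layer; and small vertices (and big vertices of $P_k$) have no neighbors outside their own path. Starting from $H$, for each layer $\ell$ I relink the vertices of $V(H)\cap V(P_\ell)$, taken in order of index, into a single path $\tilde P_\ell$. Then I "repair" the big vertices: for every $b=P_i^x\in B(G_k^g)\cap V(H)$ with $i<k$ and every $j$ with $i<j\le k$ such that $P_j^x\notin V(H)$, I add a new dummy vertex $P_j^x$, insert it into $\tilde P_j$ at the position given by its index, and add the edge $b\dd P_j^x$; no other edges are added. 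Finally I add a source $s$ adjacent to the first vertex of each $\tilde P_\ell$ and a sink $t$ adjacent to the last vertex of each $\tilde P_\ell$ (discarding empty layers so that $F$ stays simple), and set $P_\ell:=s\dd\tilde P_\ell\dd t$.

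A routine case analysis then shows that $F$ meets the specifications of Lemma~\ref{general-series-parallel}: specification~(i) is immediate; each internal vertex satisfies exactly one outcome of specification~(ii) --- small vertices and medium vertices of $H$ whose unique higher-layer neighbor was deleted land in $S'(F)$, the repaired big vertices of $H$ with $i_b<k$ have exactly one neighbor in every higher-layer path (original or dummy) and nothing else outside their own path, so they land in $B'(F)$, and the remaining medium vertices and all dummies have exactly one neighbor, in a strictly lower layer; and specification~(iii) holds because each $\tilde P_\ell$ is ordered by index, so the required monotonicity of distances from $s$ is inherited from the monotonicity of indices in the adjacency rule of $G_k^g$. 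Hence $c(F)$ is $(s,t)$-series-parallel. It remains to embed $H'$ in $c(F)$: here $B'(F)=\{b\in B(G_k^g)\cap V(H):i_b<k\}$ and for such $b$ the branch set $N_F'[b]$ is the full star $\{b\}\cup\{P_j^x:i_b<j\le k\}$, which contains the restricted star $N_H^M[b]$ used to define $H'$. Mapping each branch set $N_H^M[b]$ of $H'$ to $N_F'[b]$, and each singleton of $H'$ (including $\{b\}$ for $b$ big in $P_k$) to the corresponding singleton of $c(F)$, one checks this is well defined and injective --- the key being that a medium vertex of $H$ whose big partner lies outside $H$ remains a singleton in both graphs, and that no vertex which is a singleton of $H'$ is swallowed by a full star of $F$ --- and that every edge of $H'$, being an edge of $H\subseteq F$ between two distinct branch sets, survives in $c(F)$.

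The main obstacle is the "repair" step together with the bookkeeping it forces. One must add exactly enough dummy vertices so that every surviving big vertex of $H$ outside $P_k$ becomes a $B'(F)$-vertex and is contracted in $c(F)$ precisely as in $H'$, while simultaneously guaranteeing that big vertices of $P_k$ and medium vertices orphaned by deletions are \emph{not} contracted, and that relinking the broken layer-paths and inserting the dummies neither creates spurious high-degree vertices nor disturbs the index-monotonicity needed for specification~(iii). Establishing that $B'(F)$ is exactly the set of big vertices of $H$ outside $P_k$, and that the natural map from $H'$ into $c(F)$ is an injective homomorphism, is where essentially all the work lies; the series-parallel conclusion is then immediate from Lemma~\ref{general-series-parallel}.
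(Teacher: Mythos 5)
Your plan is correct and rests on the same key lemma as the paper, namely Lemma~\ref{general-series-parallel}, but your construction of the auxiliary two-terminal graph $F$ is genuinely different and, as you yourself note, forces a lot of bookkeeping that the paper avoids. You build $F$ ``from below'': start from $H$, relink the surviving vertices of each layer-path, graft dummy medium vertices onto every surviving big vertex that lost some of its upward neighbors, discard emptied layers, and re-index; you then have to verify that the re-indexing preserves specification~(iii), that dummies and relinking don't produce collisions, and that the natural map from $H'$ into $c(F)$ is an injective homomorphism. The paper instead builds $F$ ``from above'': it takes $F$ to be all of $G_k^g$ (not $H$) with $s$ and $t$ appended, and deletes only the edges from each deleted big vertex $b\in X_B$ to $M(G_k^g)$, so that such $b$ falls into $S'(F)$ rather than $B'(F)$. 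Since no vertex of $G_k^g$ is removed, the layer-paths $P_1,\ldots,P_k$ remain intact, specifications~(i)--(iii) of Lemma~\ref{general-series-parallel} hold immediately, there are no dummies and no re-indexing, and the relation between $H'$ and $c(F)$ becomes a short explicit list of vertex and edge deletions turning $c(F)$ into $H'$. In short, both routes apply the same lemma and then exhibit $H'$ as a subgraph of $c(F)$, but keeping all of $G_k^g$ in $F$ sidesteps essentially all of what you identify as ``where the work lies,'' so while your route can be carried through, you may want to adopt the paper's simpler choice of $F$.
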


\begin{proof}
    Let $X \subseteq V(G_k^g)$ be such that $H = G_k^g \setminus X$. Put $X_B = X \cap B(G_k^g)$, $X_M = X \cap M(G_k^g)$, and $X_S = X \cap S(G_k^g)$. Let $F$ be obtained from $G_k^g$ by removing, for each $b \in X_B$, every edge incident with $b$ whose other end is in $M(G_k^g)$. Next, we add auxiliary terminal vertices $T^-$ and $T^+$ to $F$, where $T^-$ is incident to $P_\ell^0$ for $1 \leq \ell \leq k$ and $T^+$ is incident to $P_\ell^{2^{k+g}}$ for $1 \leq \ell \leq k$. Applying Lemma \ref{general-series-parallel} gives that $c(F)$ is $(T^-,T^+)$-series-parallel, where $c(F)$ is as defined there. We now show that $H'$ is a subgraph of $c(F)$. We observe that $H'$ can be obtained from $c(F)$ as follows.
    \begin{enumerate}[{\rm (i)}]
        \item Delete $T^-$ and $T^+$.
        \item For $v \in X_S$, delete the vertex $\{v\}$ from $c(F)$.
        \item For $m \in X_M$, let $b$ be the unique neighbor (in $G_k^g$) of $m$ in $B(G_k^g)$, and let $s_1$ and $s_2$ be the two neighbors of $m$ in $S(G_k^g)$.
        \begin{itemize}
            \item If $b \in X_B$, delete $\{m\}$ from $c(F)$ (since $b \in X_B$, there is no edge between $b$ and $m$ in $F$, and so $\{ m \}$ is a vertex of $c(F)$).
            \item If $b \notin X_B$, there is a vertex $S$ in $c(F)$ such that $b,m \in S$. If $s_1 \notin X_S$, delete the edge between $S$ and $\{s_1\}$ in $c(F)$. Similarly, if $s_2 \notin X_S$, delete the edge between $S$ and $\{s_2\}$ in $c(F)$.
        \end{itemize}
        \item For $b \in X_B$, delete $\{b\}$ from $c(F)$.
    \end{enumerate}
    Thus $H'$ is a subgraph of the series-parallel graph $c(F)$, and so $H'$ is series-parallel.
\end{proof}

\begin{lemma}\label{no9wall}
    For all\/ $g,k\in \poi$, the graph\/ $G_k^g$ has no induced subgraph isomorphic to a subdivision of\/ $W_{5 \times 5}$.
\end{lemma}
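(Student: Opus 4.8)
The plan is to combine Lemma~\ref{H'-SP} with the classical fact that a subdivision of $W_{5\times 5}$ has treewidth $5$, hence cannot embed into a series-parallel graph (treewidth $\le 2$). So suppose for contradiction that some induced subgraph $H$ of $G_k^g$ is a subdivision of $W_{5\times 5}$. Form $H'$ from $H$ by contracting, for each big vertex $b$ that survives in $H$, all edges with both ends in $N_H^M[b]$; by Lemma~\ref{H'-SP}, $H'$ is series-parallel, so $\tw(H')\le 2$. The key point is that contraction cannot decrease treewidth, and in fact I want to argue that $H'$ still contains a subdivision of $W_{5\times 5}$ (or at least a graph of treewidth $\ge 3$), contradicting $\tw(H')\le 2$.

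First I would pin down what the contracted sets $N_H^M[b]$ look like inside $H$. Each such set consists of a big vertex $b$ together with some of its medium neighbors, and by Lemma~\ref{props}\eqref{props-4} and the degree bounds stated after Construction~\ref{themainconstruction}, every medium vertex has degree exactly $3$ with exactly one neighbor (namely $b$) of degree $\ge 3$; its other two neighbors are small (degree $\le 2$). Moreover distinct big vertices are far apart (Lemma~\ref{props}\eqref{props-3}), so the sets $N_H^M[b]$ are pairwise disjoint and pairwise anticomplete, and each induces a star in $H$ (center $b$, leaves a subset of its medium neighbors). Since $H$ is a subdivision of $W_{5\times 5}$, which has maximum degree $3$, any vertex of $H$ of degree $3$ in $H$ is a branch vertex of the subdivision, and every other vertex of $H$ has degree $\le 2$. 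In particular, within one star $N_H^M[b]$: the leaves (medium vertices) have degree $2$ in $H$ once we also count their two small neighbors, so they are internal subdivision vertices; contracting the star just merges $b$ with some subset of these degree-$2$ path-vertices that are all adjacent to $b$.

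The heart of the argument is then to show that contracting such a star in a subdivision of $W_{5\times 5}$ either leaves a subdivision of $W_{5\times 5}$ or creates a minor of it of treewidth $\ge 3$; doing this for all the (disjoint, anticomplete) stars simultaneously yields a graph of treewidth $\ge 3$, contradicting series-parallelness of $H'$. Here the cleanest route is probably: $H'$ is obtained from $H=$ (a subdivision of $W_{5\times 5}$) by a sequence of edge contractions, so $W_{5\times 5}$ is a minor of $H$ and hence $W_{5\times 5}$ is a minor of $H'$ unless some contraction destroys connectivity of a branch set of the $W_{5\times 5}$-model in $H$ — but edge contraction never destroys connectivity, so in fact $W_{5\times 5}$ is a minor of $H'$, giving $\tw(H')\ge \tw(W_{5\times 5})=5>2$, the desired contradiction. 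The main obstacle to watch for is a subtlety in Lemma~\ref{H'-SP} versus what we need: the contractions there are of edges of $H$, and contracting edges of $G$ can only keep $H$-minors or turn them into proper minors, never introduce new obstructions — so I should double-check that the model of $W_{5\times 5}$ inside $H$ interacts correctly with the identified vertices (in particular, that two branch sets of that model are not forced to merge, which cannot happen since they are vertex-disjoint and contraction respects that). Once that is verified, the contradiction $5\le\tw(W_{5\times5})\le\tw(H')\le 2$ completes the proof.
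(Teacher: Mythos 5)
Your overall framework matches the paper's: form $H'$ from $H$ by the contractions in Lemma~\ref{H'-SP}, note $\tw(H')\le 2$, and seek a contradiction by showing $H'$ still has treewidth at least $3$. But the execution contains a genuine error at the key step. You write ``contraction cannot decrease treewidth'' and conclude that ``$W_{5\times 5}$ is a minor of $H$ and hence $W_{5\times 5}$ is a minor of $H'$'' -- this is backwards. When $H'$ is obtained from $H$ by contracting edges, $H'$ is a \emph{minor} of $H$, so $\tw(H')\le\tw(H)$, and minors of $H$ have no reason to be minors of $H'$. (Contracting a $K_4$ to a point kills the $K_4$-minor and drops treewidth from $3$ to $0$.) So there is no contradiction available just from $\tw(H)=5$ and $\tw(H')\le 2$; the latter is perfectly consistent with the former. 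The observation that the branch sets are vertex-disjoint does not help: contractions can still merge adjacent branch sets, destroying the model.

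The paper closes this gap differently. Because each contracted set $N_H^M[b]$ has diameter at most $2$ in $H$, two vertices $u,v$ with $p(u)=p(v)$ are at distance at most $2$ in $H$. The paper then chooses a subdivision $J$ of $W_{3\times 3}$ inside $H$ (which exists since $H$ is a subdivision of $W_{5\times 5}$) whose branch vertices are pairwise at distance at least $3$ in $J$. Under $p$, no two branch vertices of $J$ are identified, so the image of $J$ in $H'$ is still a subdivision of $W_{3\times 3}$, giving $\tw(H')\ge 3$, the needed contradiction. The crucial idea you are missing is exactly this ``shrink to $W_{3\times 3}$ and spread the branch vertices out'' step, which is what guarantees the wall structure survives contraction; simply appealing to minor-monotonicity of treewidth does not work. (As a smaller note, your claim that each medium vertex has degree $2$ in $H$ is also not forced -- it could have degree $3$ if all of $b,s_1,s_2$ survive -- but that point is secondary to the minor-direction error.)
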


\begin{proof}
Suppose that $J$ is an induced subgraph of $G_k^g$ that is isomorphic to a subdivision of $W_{5 \times 5}$. Say a vertex $v \in V(J)$ is a \emph{branch vertex} of $J$ if $v$ is also a vertex of $W_{5 \times 5}$; that is, $v$ is not a vertex that was created in the subdivision process. Let $H$ be a subdivision of $W_{3 \times 3}$ in $J$ such that every two branch vertices of $H$ have distance at least $3$ in $H$. Let $H'$ be as defined in Lemma \ref{H'-SP}, and for each vertex $u$ of $H$, let $p(u)$ be the unique vertex of $H'$ such that $u \in p(u)$. Observe that, for vertices $u,v \in V(H)$, $p(u) = p(v)$ can only hold if $u$ and $v$ have distance at most two in $H$; in particular, for every pair of distinct branch vertices $u,v \in V(H)$ we have $p(u) \neq p(v)$. This implies that $H'$ contains a subdivision of $W_{3 \times 3}$ (whose branch vertices are exactly the branch vertices of $H$). But now $\tw(H')>2$, which contradicts Lemma~\ref{H'-SP}.
\end{proof}

\section{No large complete bipartite induced minor}

In this section we prove the following result.

\begin{lemma} \label{no-bipartite-im}
    There exists\/ $r_0\in \poi$ such that for all\/ $g,k\in \poi$, the graph\/ $G_k^g$ does not contain\/ $K_{r_0,r_0}$ as an induced minor.
\end{lemma}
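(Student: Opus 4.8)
The plan is to lift a hypothetical large complete bipartite induced minor to the series-parallel graph $H'$ of Lemma~\ref{H'-SP}, after first controlling how the branch sets can accumulate around the high-degree vertices of $G_k^g$.

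Suppose for contradiction that $K_{r,r}$ is an induced minor of some induced subgraph $H$ of $G_k^g$, with $r$ large (to be fixed), witnessed by branch sets $X_1,\dots,X_r,Y_1,\dots,Y_r$ chosen so that $\sum_Z |Z|$ is minimum. First I would record the standard consequences of minimality: working with a spanning tree of each $G_k^g[Z]$, every leaf of that tree must be the unique vertex of $Z$ realizing some required edge to the opposite side of the bipartition; in particular, once $r\ge 4$, no branch set is a single vertex (a single-vertex branch set would need degree at least $r$, whereas by Lemma~\ref{props} only big vertices $P_i^x$ with $i\le k-2$ have degree at least $4$), and every branch set $Z$ satisfies $|N_{G_k^g}(Z)|\ge r$. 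I would also record the basic dichotomy: since in $G_k^g$ every edge between two distinct paths has a big endpoint, a connected subset of $V(G_k^g)$ with no big vertex lies inside a single path $P_\ell$, and then, each of its vertices having at most two neighbours inside it, it is a sub-path of $P_\ell$.

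The heart of the matter is the claim that there is an absolute constant $C$ such that, in any induced $K_{r,r}$-model as above with $r\ge C$, every star $N_H^M[b]$ (for $b\in B(G_k^g)\cap V(H)$) meets fewer than $C$ of the branch sets. I would prove this by contradiction. If at least $C$ branch sets meet $N_H^M[b]=\{b,m_1,\dots,m_d\}$, then at most one of them contains $b$, and each of the others meets the star only in medium leaves $m_\ell=P_{j_\ell}^{x}$, which (via the edges $b\,m_\ell$) forces it onto one fixed side of the bipartition; passing to that side, we obtain many pairwise-anticomplete branch sets $W_1,\dots,W_{C'}$, each meeting the star in a leaf $P_{j_t}^x$ on a distinct layer $j_t$, each adjacent to all $r$ branch sets on the other side, and each able to leave $P_{j_t}^x$ only along $P_{j_t}$ (it cannot use $b$). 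One then argues that for $W_t$ to be adjacent to $r$ distinct branch sets it must contain vertices of $P_{j_t}$ at indices spread out by at least the spacing of the cross-edges between $P_{j_t}$ and any other $P_{j_{t'}}$; since all the $W_t$ also contain a common index $x$, their index-intervals on the respective paths overlap, and the overlap must then contain an index carrying a cross-edge between $P_{j_t}$ and $P_{j_{t'}}$, producing an edge between $W_t$ and $W_{t'}$ and contradicting that they are anticomplete. The laminar order of the connectors (hypothesis (iii) of Lemma~\ref{general-series-parallel}) keeps the possible detours of the $W_t$ under control. I expect this step to be the main obstacle, since the $W_t$ need not be clean sub-paths -- they may detour through big vertices onto higher layers -- and one must check that every such detour still forces the desired cross-edge, or is itself blocked by the treewidth-$2$ structure of $H'$; it may be cleanest to run this part as an induction on $k$.

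Granting the claim, the argument concludes quickly. For $r\ge C$, the contraction $\pi$ of Lemma~\ref{H'-SP} maps the branch sets to connected subgraphs of the series-parallel graph $H'$ with every vertex of $H'$ lying in fewer than $C$ of the images. Let $H''$ be obtained from $H'$ by replacing each vertex by $C$ mutually adjacent copies and joining copies of adjacent vertices; then $\tw(H'')\le C(\tw(H')+1)-1\le 3C-1$. Assigning distinct copies of each vertex of $H'$ to the (at most $C$) branch sets whose images contain it yields pairwise-disjoint connected subsets of $V(H'')$ realizing all the required adjacencies (superfluous adjacencies do no harm for an ordinary minor), so $K_{r,r}$ is a minor of $H''$; since treewidth is minor-monotone and $\tw(K_{r,r})=r$, this gives $r\le 3C-1$. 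Hence $G_k^g$ has no $K_{r_0,r_0}$ induced minor for $r_0:=3C$, which proves the lemma.
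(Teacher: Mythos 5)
Your approach is genuinely different from the paper's. The paper invokes a structural result from \cite{complete-bipartite-minors} (restated as Lemma~\ref{complete-induced-minor}) reducing the problem to showing $G_k^g$ contains no wide theta of width $8$ and no large wall induced minor; the former is then established by an elementary case analysis showing that any two vertices of $G_k^g$ admit at most seven pairwise internally anticomplete paths (at most four ``nonstandard'', at most two ``overpasses'', at most one ``non-overpass''). You instead try to lift a hypothetical $K_{r,r}$-induced-minor directly to the series-parallel contraction $H'$, control how many branch sets can pile up on a single contracted star $N_H^M[b]$, and then conclude by comparing treewidths in a bounded blow-up $H''$ of $H'$. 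The blow-up reduction at the end is correct and pleasant: granting your Claim that at most $C$ branch sets meet any star, the images are connected in $H'$, each vertex of $H'$ is covered at most $C$ times, so distinct copies in the $C$-blow-up $H''$ yield a (non-induced) $K_{r,r}$-minor, forcing $r\le\tw(H'')\le C(\tw(H')+1)-1\le 3C-1$. This is a legitimately different route and, if completed, would avoid citing the wide-theta machinery.

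However, the proposal has a real gap, and you correctly identify it yourself: the central Claim, that each star $N_H^M[b]$ meets fewer than $C$ branch sets, is not proven. Your sketch handles the clean case where each $W_t$ stays on its own path $P_{j_t}$ and is forced to span a long interval there, but you flag that the $W_t$ may detour through big vertices onto other layers, and you only gesture at ``the laminar order of the connectors'' and ``the treewidth-$2$ structure of $H'$'' to rule out such detours. That is precisely where the difficulty lies, and neither the laminarity condition (iii) of Lemma~\ref{general-series-parallel} nor $\tw(H')\le 2$ alone obviously blocks a branch set from escaping through a distant big vertex, wandering to a higher layer, and collecting its required neighbours there while remaining anticomplete to the other $W_{t'}$. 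Until this is actually argued, the proof does not go through. A smaller inaccuracy worth noting: the claim that ``once $r\ge 4$, no branch set is a single vertex'' is false as stated --- big vertices $P_i^x$ with small layer $i$ have degree as large as $k-i+2$, so a single big vertex could in principle dominate $r$ branch sets for moderate $r$; fortunately your main argument does not appear to rely on this remark.
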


We need to prepare for the proof. A graph $T$ is a \emph{wide theta of width\/ $m$} if for some distinct vertices $a,b \in V(T)$ (called the \emph{ends of\/ $T$}), there are $m$ pairwise internally anticomplete paths $P_1,\ldots,P_m$ from $a$ to $b$ in $T$, each of length at least $2$, and $T$ has no other vertices or edges. The following is an immediate corollary of 1.3 in \cite{complete-bipartite-minors}.

\begin{lemma}[Chudnovsky, Hajebi, Spirkl \cite{complete-bipartite-minors}]\label{complete-induced-minor}
    For all\/ $h\in \poi$, there exists\/ $r = r(h)\in \poi$ such that if\/ $G$ does not contain a wide theta of width\/ $8$ as an induced subgraph or\/ $W_{h \times h}$ as an induced minor, then\/ $G$ does not contain\/ $K_{r,r}$ as an induced minor.
\end{lemma}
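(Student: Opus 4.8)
To prove Lemma~\ref{no-bipartite-im}, the plan is to reduce it to the statement that $G_k^g$ contains no wide theta of width $8$ as an induced subgraph, and then to prove that by a direct analysis of how paths traverse the ``column'' structure of $G_k^g$. For the reduction, let $h_0\in\poi$ be as in Lemma~\ref{no-wall-im} and put $r_0=r(h_0)$, with $r(\cdot)$ as in Lemma~\ref{complete-induced-minor}. By Lemma~\ref{no-wall-im}, no $G_k^g$ contains $W_{h_0\times h_0}$ as an induced minor, so by Lemma~\ref{complete-induced-minor} it suffices to show that no $G_k^g$ contains a wide theta of width $8$ as an induced subgraph.

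Suppose for contradiction that $T$ is an induced subgraph of $G_k^g$ isomorphic to a wide theta of width $m\ge 8$, with ends $a,a'$ and internally anticomplete paths $Q_1,\dots,Q_m$ from $a$ to $a'$, each of length at least $2$. Since $\deg_T(a)=\deg_T(a')=m\ge 8$, and every small vertex of $G_k^g$ has degree at most $2$ while every medium vertex has degree exactly $3$, the ends $a=P_i^x$ and $a'=P_{i'}^{x'}$ are big vertices, with $2+(k-i)\ge m$ and $2+(k-i')\ge m$; hence $i,i'\le k-6$, and we may assume $i\le i'$. As each $Q_t$ has length at least $2$, the ends are non-adjacent, which forces $x\ne x'$ (if $x=x'$ then either $i<i'$ and $a$ is adjacent to $P_{i'}^x=a'$, or $i=i'$ and $a=a'$); we may assume $x<x'$, and since $x$ and $x'$ are both multiples of $2^{k-i'+g}\ge 2^{7}$ we get $x'-x\ge 2^{7}$.

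The heart of the argument is a careful study of the $m$ paths, based on the following features of $G_k^g$. Every vertical edge joins two vertices of a common \emph{active} column (a column $c$ with $c$ divisible by $2^g$ and $1\le k+g-v_2(c)\le k$, where $v_2$ is the $2$-adic valuation), and within any column $c$ the vertices $P_1^c,\dots,P_k^c$ induce a star centered at the big vertex $P_{j(c)}^c$ of that column, with leaves $P_j^c$ for $j(c)<j\le k$ (the vertices $P_j^c$ with $j<j(c)$ are small and isolated within the column). Thus a path can change layers only by passing through such a center. Two consequences drive the proof: \textbf{(A)} for a column $c$ with $x<c<x'$, the center $P_{j(c)}^c$ is distinct from $a$ and $a'$, so if it lies in $V(T)$ it is internal to a unique $Q_t$, whence at most two of its leaves lie in $V(T)$ and each such leaf lies on that same $Q_t$; and \textbf{(B)} if some $Q_t$ contains $P_\ell^{c-1},P_\ell^c,P_\ell^{c+1}$ consecutively for a big-$\ell$ column $c$ with $x<c<x'$, then $P_j^c\notin V(T)$ for every $j$ with $\ell<j\le k$. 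Now at least $m-2\ge 6$ of the $Q_t$ leave $a$ through a medium ``downward'' neighbour $P_j^x$ with $j>i$ (into pairwise distinct layers), and similarly at $a'$. Tracing these downward paths across the family of big columns lying between $x$ and $x'$ --- which is organised as a binary-tree-like hierarchy of nested stars --- and repeatedly applying (A) and (B), one shows that all of these paths cannot be accommodated: each sufficiently deep big column either forces (by (A)) all but one of the crossing paths into the few layers above its center, or forces (by (B)) whole blocks of lower layers out of $V(T)$, and chaining these constraints through the hierarchy eventually yields $m\le 3$, a contradiction. (Wide thetas of width exactly $3$ do occur in $G_k^g$, so the constant here really is at least $3$; but the bound $8$ leaves plenty of slack.)

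The step I expect to be the main obstacle is this last one --- in particular, controlling paths that leave the column-interval $[x,x']$ and later return to it, and managing the interplay between the constraints produced near $a$ and those produced near $a'$. I anticipate organising it as an induction on $k$: if no vertex of $T$ lies in layer $1$, then $T$ is contained in $G_k^g[V(P_2)\cup\dots\cup V(P_k)]$, which is two copies of $G_{k-1}^g$ glued along a single column that is inactive in this subgraph, so by $2$-connectedness of $T$ either $T$ lies in one copy and we are done by induction, or $T$ meets the (pairwise non-adjacent) vertices of the gluing column in a way incompatible with a theta having only two vertices of degree at least $3$; and if some vertex of $T$ does lie in layer $1$, then $T$ must contain the unique big vertex $c_1$ of layer $1$, which confines the interval $[x,x']$ enough to push the dichotomy of (A) and (B) through to a contradiction.
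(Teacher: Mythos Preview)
The statement you were asked to prove, Lemma~\ref{complete-induced-minor}, is not proved in this paper at all: it is quoted from \cite{complete-bipartite-minors} as ``an immediate corollary of 1.3'' there, and the paper uses it as a black box. Your proposal does not attempt to prove it either---you invoke Lemma~\ref{complete-induced-minor} in your very first paragraph and then spend the rest of the write-up arguing that $G_k^g$ contains no wide theta of width $8$. What you have written is a proof sketch for Lemma~\ref{no-bipartite-im}, not for Lemma~\ref{complete-induced-minor}.

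Reading your proposal as an attempt at Lemma~\ref{no-bipartite-im}: the reduction step matches the paper exactly, but the combinatorial step diverges. The paper does not use your column/star observations (A) and (B) or any induction on $k$. Instead it bounds, for any two big vertices $b_1,b_2$, the number of pairwise internally anticomplete paths between them by $7$ (Corollary~\ref{leq-seven-paths}) via a short classification: at most four paths are \emph{nonstandard} (each must use one of the four $P_{\ell_1}$- or $P_{\ell_2}$-edges incident with $b_1$ or $b_2$); among the remaining \emph{standard} paths, Lemma~\ref{non-overpass} shows at most one is a non-overpass (by choosing an internal big vertex of minimal layer) and Lemma~\ref{leq-2-overpasses} shows at most two are overpasses (by looking at the first big vertex each overpass meets in a layer above $\ell_1$ and arguing these must lie on opposite sides of $x_1$). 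This gives $4+1+2=7$ directly, with no recursion and no need to control paths that leave and re-enter the interval $[x_1,x_2]$.

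By contrast, your sketch leaves the hard part---``chaining these constraints through the hierarchy eventually yields $m\le 3$''---unproved, and you yourself flag the handling of paths that exit $[x,x']$ and the layer-$1$ case of the induction as unresolved obstacles. The paper's standard/nonstandard and overpass/non-overpass partition sidesteps all of that; I would recommend adopting it rather than trying to complete the inductive column argument.
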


Accordingly, in what follows, we will show that for all $g,k\in \poi$ and every pair of distinct vertices $u,v \in G_k^g$, there can be at most seven pairwise internally anticomplete paths between $u$ and $v$ in $G_k^g$. This will be achieved in Corollary \ref{leq-seven-paths}, which, combined with Lemmas \ref{no-wall-im} and \ref{complete-induced-minor}, gives a proof of Lemma \ref{no-bipartite-im}. We remark that through further casework the proof shown here can be extended to obtain a bound of at most three paths, which is tight.

Note that for all $g,k\in \poi$ and every pair of distinct vertices $u,v \in V(G_k^g)$, there can be at most $\min\{ \deg_{G_k^g}(u),\deg_{G_k^g}(v) \}$ pairwise internally anticomplete paths between $u$ and $v$ in $G_k^g$. In particular, if $u$ and $v$ are not both in $B(G_k^g)$, there can be no more than three pairwise internally anticomplete paths between the two. Thus, we need only consider paths between two big vertices in $G_k^g$. We recall the notation used in defining $G_k^g$. Suppose $b_1 = P_{\ell_1}^{x_1} \in B(G_k^g)$ and $b_2 = P_{\ell_2}^{x_2} \in B(G_k^g)$ are two distinct vertices of $G_k^g$; by the symmetry of $G_k^g$, we may assume that $\ell_1 \leq \ell_2$ and $x_1 < x_2$. If $R$ is a path from $b_1$ to $b_2$ in $G_k^g$, we say that \emph{$R$ switches layers at\/ $x$, from\/ $\ell$ to\/ $\ell'$}, if $E(R)$ contains an edge with ends $P_\ell^x$ and $P_{\ell'}^x$. We emphasize that if a path switches layers at $x$ then it contains a big vertex with index $x$.

Suppose $\mathcal{R}$ is a set of paths from $b_1$ to $b_2$ that are pairwise internally anticomplete. Observe that, for every $R \in \mathcal{R}$, $R^*$ does not contain any big vertex with index $x_1$ or $x_2$, as $b_1$ and $b_2$ are the unique big vertices in $G_k^g$ with indices $x_1$ and $x_2$, respectively; this fact will be used in the proofs of the following two lemmas. For $R \in \mathcal{R}$, let $R^-$ denote the unique neighbor of $b_1$ in $R$, and let $R^+$ denote the unique neighbor of $b_2$ in $R$. We say that a path $R \in \mathcal{R}$ is \emph{standard} if $R^- = P_{\ell_1'}^{x_1}$ for some $\ell_1' > \ell_1$ and $R^+ = P_{\ell_2'}^{x_2}$ for some $\ell_2' > \ell_2$. Otherwise, we say that $R$ is \emph{nonstandard} (see Figure \ref{fig:nonstandard-path}). Note that $\mathcal{R}$ can include at most four nonstandard paths, since every nonstandard path uses either an edge of $P_{\ell_1}$ incident with $P_{\ell_1}^{x_1}$ or an edge of $P_{\ell_2}$ incident with $P_{\ell_2}^{x_2}$, and there are only four such edges. In what follows, we show that there may be at most three standard paths in $\mathcal{R}$.

Let $\mathcal{R}' = \{R^*\colon R$ is a standard path in $\mathcal{R}\}$. For $R^* \in \mathcal{R}'$, say that $R$ is an \emph{overpass} if, for some $\ell$ with $1 \leq \ell \leq k$, we have $V(P_\ell[x_1:x_2]) \subseteq V(R^*)$ and $P_\ell[x_1:x_2] \cap B(G_k^g) = \varnothing$ (see Figure \ref{fig:overpass}). For ease of notation, we will also say that $R^*$ is (or is not) an overpass to mean that $R$ is (or is not) an overpass.

\begin{figure}[t!]
\centering

\begin{tikzpicture}[scale=1,auto=left, line width=0.28mm]
\tikzstyle{every node}=[inner sep=1.2pt, fill=black,circle,draw]
\centering

    \draw (1, 1) node[draw=none,fill=none,left] {$P_5^0$} -- (12, 1) node[draw=none,fill=none,right] {$P_5^{64}$};
    \draw (1, 2) node[draw=none,fill=none,left] {$P_4^0$} -- (12, 2) node[draw=none,fill=none,right] {$P_4^{64}$};
    \draw (1, 3) node[draw=none,fill=none,left] {$P_3^0$} -- (12, 3) node[draw=none,fill=none,right] {$P_3^{64}$};
    \draw (1, 4) node[draw=none,fill=none,left] {$P_2^0$} -- (12, 4) node[draw=none,fill=none,right] {$P_2^{64}$};
    \draw (1, 5) node[draw=none,fill=none,left] {$P_1^0$} -- (12, 5) node[draw=none,fill=none,right] {$P_1^{64}$};

    \draw[color=red] (3.75,3) -- (7.875, 3);

    \foreach \i in {1,...,5} {
        \node() at (6.5, \i) {};
        \node() at (1,\i) {};
        \node at (12,\i) {};
    }

    \draw (6.5, 5) edge[line width=0.28mm, line width=0.28mm, out = -90, in = 90] (6.5, 4);
    \draw (6.5, 5) edge[line width=0.28mm, line width=0.28mm, out = -105, in = 105] (6.5, 3);
    \draw (6.5, 5) edge[line width=0.28mm, out = -112, in = 112] (6.5, 2);
    \draw (6.5, 5) edge[line width=0.28mm, out = -120, in = 120] (6.5, 1);
    \node[draw=red,fill=red] at (6.5, 3) {};

    \foreach \i in {1,...,4} {
        \node() at (3.75,\i) {};
        \node() at (9.25,\i) {};
    }
    
    \node[draw=red,fill=red] at (3.75,3) {};
    \draw (3.75, 4) edge[line width=0.28mm, out = -90, in = 90,draw=red] (3.75, 3);
    
    \draw (3.75, 4) edge[line width=0.28mm, out = -105, in = 105] (3.75, 2);
    \draw (3.75, 4) edge[line width=0.28mm, out = -112, in = 112] (3.75, 1);
    \draw (9.25, 4) edge[line width=0.28mm, out = -90, in = 90] (9.25, 3);
    \draw (9.25, 4) edge[line width=0.28mm, out = -105, in = 105] (9.25, 2);
    \draw (9.25, 4) edge[line width=0.28mm, out = -112, in = 112] (9.25, 1);

    \node[draw=red,fill=red,scale=2] at (3.75,4) {};

    \foreach \i in {1,...,3} {
        \node() at (2.375, \i) {};
        \node() at (5.125,\i) {};
        \node() at (7.875,\i) {};
        \node() at (10.625,\i) {};
    }
    
    \draw (2.375, 3) edge[line width=0.28mm, out = -90, in = 90] (2.375, 2);
    \draw (2.375, 3) edge[line width=0.28mm, out = -112, in = 112] (2.375, 1);
    \draw (5.125, 3) edge[line width=0.28mm, out = -90, in = 90] (5.125, 2);
    \draw (5.125, 3) edge[line width=0.28mm, out = -112, in = 112] (5.125, 1);
    \draw (7.875, 3) edge[line width=0.28mm, out = -90, in = 90] (7.875, 2);
    \draw (7.875, 3) edge[line width=0.28mm, out = -112, in = 112] (7.875, 1);
    \draw (10.625, 3) edge[line width=0.28mm, out = -90, in = 90] (10.625, 2);
    \draw (10.625, 3) edge[line width=0.28mm, out = -112, in = 112] (10.625, 1);

    \node[draw=red,fill=red] at (5.125,3) {};

    \node[draw=red,fill=red,scale=2] at (7.875,3) {};

    \foreach \i in {1,2} {
        \node() at (1.6875,\i) {};
        \node() at (3.0625,\i) {};
        \node() at (4.4375,\i) {};
        \node() at (5.8125,\i) {};
        \node() at (7.1875,\i) {};
        \node() at (8.5625,\i) {};
        \node() at (9.9375,\i) {};
        \node() at (11.3125,\i) {};
    }

    \draw(1.6875,2) edge[line width=0.28mm, out = -90, in = 90] (1.6875,1);
    \draw(3.0625,2) edge[line width=0.28mm, out = -90, in = 90] (3.0625,1);
    \draw(4.4375,2) edge[line width=0.28mm, out = -90, in = 90] (4.4375,1);
    \draw(5.8125,2) edge[line width=0.28mm, out = -90, in = 90] (5.8125,1);
    \draw(7.1875,2) edge[line width=0.28mm, out = -90, in = 90] (7.1875,1);
    \draw(8.5625,2) edge[line width=0.28mm, out = -90, in = 90] (8.5625,1);
    \draw(9.9375,2) edge[line width=0.28mm, out = -90, in = 90] (9.9375,1);
    \draw(11.3125,2) edge[line width=0.28mm, out = -90, in = 90] (11.3125,1);

\end{tikzpicture}

\caption{A nonstandard path between two (enlarged) big vertices}
\label{fig:nonstandard-path}
\end{figure}

\begin{figure}[t!]
\centering

\begin{tikzpicture}[scale=1,auto=left, line width=0.28mm]
\tikzstyle{every node}=[inner sep=1.2pt, fill=black,circle,draw]
\centering

    \draw (1, 1) node[draw=none,fill=none,left] {$P_5^0$} -- (12, 1) node[draw=none,fill=none,right] {$P_5^{64}$};
    \draw (1, 2) node[draw=none,fill=none,left] {$P_4^0$} -- (12, 2) node[draw=none,fill=none,right] {$P_4^{64}$};
    \draw (1, 3) node[draw=none,fill=none,left] {$P_3^0$} -- (12, 3) node[draw=none,fill=none,right] {$P_3^{64}$};
    \draw (1, 4) node[draw=none,fill=none,left] {$P_2^0$} -- (12, 4) node[draw=none,fill=none,right] {$P_2^{64}$};
    \draw (1, 5) node[draw=none,fill=none,left] {$P_1^0$} -- (12, 5) node[draw=none,fill=none,right] {$P_1^{64}$};

    \draw[color=red] (3.75,4) -- (9.25,4);

    \foreach \i in {1,...,5} {
        \node() at (6.5, \i) {};
        \node() at (1,\i) {};
        \node at (12,\i) {};
    }

    \draw (6.5, 5) edge[line width=0.28mm, line width=0.28mm, out = -90, in = 90] (6.5, 4);
    \draw (6.5, 5) edge[line width=0.28mm, line width=0.28mm, out = -105, in = 105] (6.5, 3);
    \draw (6.5, 5) edge[line width=0.28mm, out = -112, in = 112] (6.5, 2);
    \draw (6.5, 5) edge[line width=0.28mm, out = -120, in = 120] (6.5, 1);

    \node[color=red] at (6.5,4) {};

    \foreach \i in {1,...,4} {
        \node() at (3.75,\i) {};
        \node() at (9.25,\i) {};
    }
    
    \draw (3.75, 4) edge[line width=0.28mm, out = -90, in = 90] (3.75, 3);
    \draw (3.75, 4) edge[line width=0.28mm, out = -105, in = 105] (3.75, 2);
    \draw (3.75, 4) edge[line width=0.28mm, out = -112, in = 112,color=red] (3.75, 1);
    \draw (9.25, 4) edge[line width=0.28mm, out = -90, in = 90] (9.25, 3);
    \draw (9.25, 4) edge[line width=0.28mm, out = -105, in = 105] (9.25, 2);
    \draw (9.25, 4) edge[line width=0.28mm, out = -112, in = 112,color=red] (9.25, 1);

    \foreach \i in {1,...,3} {
        \node() at (2.375, \i) {};
        \node() at (5.125,\i) {};
        \node() at (7.875,\i) {};
        \node() at (10.625,\i) {};
    }
    
    \draw (2.375, 3) edge[line width=0.28mm, out = -90, in = 90] (2.375, 2);
    \draw (2.375, 3) edge[line width=0.28mm, out = -112, in = 112] (2.375, 1);
    \draw (5.125, 3) edge[line width=0.28mm, out = -90, in = 90] (5.125, 2);
    \draw (5.125, 3) edge[line width=0.28mm, out = -112, in = 112] (5.125, 1);
    \draw (7.875, 3) edge[line width=0.28mm, out = -90, in = 90] (7.875, 2);
    \draw (7.875, 3) edge[line width=0.28mm, out = -112, in = 112] (7.875, 1);
    \draw (10.625, 3) edge[line width=0.28mm, out = -90, in = 90] (10.625, 2);
    \draw (10.625, 3) edge[line width=0.28mm, out = -112, in = 112] (10.625, 1);

    \foreach \i in {1,2} {
        \node() at (1.6875,\i) {};
        \node() at (3.0625,\i) {};
        \node() at (4.4375,\i) {};
        \node() at (5.8125,\i) {};
        \node() at (7.1875,\i) {};
        \node() at (8.5625,\i) {};
        \node() at (9.9375,\i) {};
        \node() at (11.3125,\i) {};
    }

    \draw[color=red] (9.25,1) -- (7.1875,1) {};

    \draw(1.6875,2) edge[line width=0.28mm, out = -90, in = 90] (1.6875,1);
    \draw(3.0625,2) edge[line width=0.28mm, out = -90, in = 90] (3.0625,1);
    \draw(4.4375,2) edge[line width=0.28mm, out = -90, in = 90] (4.4375,1);
    \draw(5.8125,2) edge[line width=0.28mm, out = -90, in = 90,color=red] (5.8125,1);
    \draw(7.1875,2) edge[line width=0.28mm, out = -90, in = 90,color=red] (7.1875,1);
    \draw(8.5625,2) edge[line width=0.28mm, out = -90, in = 90] (8.5625,1);
    \draw(9.9375,2) edge[line width=0.28mm, out = -90, in = 90] (9.9375,1);
    \draw(11.3125,2) edge[line width=0.28mm, out = -90, in = 90] (11.3125,1);

    \node[color=red] at (3.75,1) {};
    \node[color=red] at (3.75,4) {};

    \node[color=red] at (9.25,1) {};
    \node[color=red] at (9.25,4) {};

    \node[color=red] at (5.125,1) {};
    \node[color=red] at (7.875,1) {};

    \node[color=red] at (4.4375,1) {};
    \node[color=red] at (8.5625,1) {};

    \node[color=red] at (5.8125,1) {};
    \node[color=red,scale=2] at (5.8125,2) {};

    \draw[color=red] (3.75,1) -- (5.8125,1);

    \node[color=red] at (7.1875,1) {};
    \node[color=red,scale=2] at (7.1875,2) {};

\end{tikzpicture}

\caption{An overpass between two (enlarged) big vertices}
\label{fig:overpass}
\end{figure}

\begin{lemma} \label{non-overpass}
    There is at most one\/ $R^* \in \mathcal{R}'$ that is not an overpass.
\end{lemma}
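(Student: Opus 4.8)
The plan is to argue that being an ``overpass'' is essentially forced, and only the topmost layer available to a standard path can fail to be one. Fix the two big vertices $b_1 = P_{\ell_1}^{x_1}$ and $b_2 = P_{\ell_2}^{x_2}$ with $\ell_1 \le \ell_2$ and $x_1 < x_2$, and recall that for a standard path $R$ we have $R^- = P_{\ell_1'}^{x_1}$ with $\ell_1' > \ell_1$ and $R^+ = P_{\ell_2'}^{x_2}$ with $\ell_2' > \ell_2$. First I would track where such a path can live: since the interiors $R^*$ for $R \in \mathcal{R}'$ are pairwise anticomplete, and since the only edges of $G_k^g$ leaving a layer $P_\ell$ occur at indices that are multiples of $2^{k-\ell+g}$ (the big vertices of layer $\ell$), a standard path must travel horizontally along a few layers and switch between them only at shared big-vertex indices. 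The key structural observation I want to extract is: the \emph{highest} layer (smallest index $\ell$) that $R^*$ enters strictly between $x_1$ and $x_2$ governs everything — if $R^*$ reaches some layer $\ell$ with $1 \le \ell < \ell_1'$ and traverses a stretch of it, then because the big vertices of layer $\ell$ are spaced $2^{k-\ell+g}$ apart and $x_1, x_2$ are both big-vertex indices of layers $\ge \ell_1 \ge \ell$ (hence also multiples of $2^{k-\ell+g}$ when $\ell \le \ell_1$... careful: only if $\ell \le \ell_1$), the path is forced to either pass through a big vertex of layer $\ell$ strictly inside $(x_1,x_2)$ — which, being big, would have to be an endpoint of another path in $\mathcal{R}$ or create an internal adjacency, contradiction — or to traverse \emph{all} of $P_\ell[x_1:x_2]$ with no big vertex of that layer inside, i.e. to be an overpass.

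The heart of the argument, and the step I expect to be the main obstacle, is showing that two distinct standard paths cannot \emph{both} fail to be overpasses. Suppose $R_1^*, R_2^* \in \mathcal{R}'$ are both non-overpasses. Each of them, by the above, must ``get stuck'': each must contain, somewhere strictly between indices $x_1$ and $x_2$, a big vertex $c_i$ (of some layer traversed by $R_i^*$). These $c_i$ lie in the interiors $R_i^*$, which are pairwise anticomplete, so $c_1 \ne c_2$ and $c_1, c_2$ are non-adjacent. Now I would use the combinatorics of indices: a big vertex of layer $\ell$ has index a multiple of $2^{k-\ell+g}$; $b_1$ has index $x_1 = b_1' \cdot 2^{k-\ell_1+g}$ with $b_1'$ odd, and similarly for $x_2$. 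A standard path leaving $b_1$ along $P_{\ell_1'}$ (with $\ell_1' > \ell_1$) sits in layers of \emph{finer} granularity; to reach a big vertex of a coarser layer it would need to return to an index that is a multiple of a large power of two — but the interval $(x_1, x_2)$ need not contain such an index close to the path's reach. Making this quantitative (which powers of two divide which indices, and how the path is pinned) is the technical core. The cleanest route is: one of the two non-overpass paths is forced to use a layer that the other one also needs, or is forced through a big vertex that is an internal vertex of the other path or of $b_1, b_2$'s neighborhoods, contradicting pairwise internal anticompleteness or the fact that $b_1, b_2$ are the only big endpoints.

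Concretely, I would organize it as follows. \emph{Step 1:} Show that if a standard path $R^*$ enters a layer $P_\ell$ at a big vertex with index in $\{x_1, x_2\}$ and travels along it, then either $R^*$ is an overpass via that layer, or $R^*$ passes through a big vertex of some layer with index strictly between $x_1$ and $x_2$; this uses only the spacing of big vertices and the layer-switching definition. \emph{Step 2:} Observe that any big vertex with index strictly between $x_1$ and $x_2$ that lies on $R^* \in \mathcal{R}'$ must lie in the interior $R^*$; since the interiors are pairwise anticomplete and $b_1, b_2$ are the designated big endpoints, at most one such ``intermediate big vertex'' can exist among all paths in $\mathcal{R}'$ without a conflict — two such vertices $c_1 \ne c_2$ on different paths would be non-adjacent big vertices, which is fine on its own, so I need the sharper claim: the presence of an intermediate big vertex on $R_i^*$ pins $R_i^*$ to occupy a layer-stretch that a second non-overpass path would also need to occupy, violating anticompleteness. \emph{Step 3:} Conclude that at most one $R^* \in \mathcal{R}'$ is a non-overpass. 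I anticipate Step 2 to require the most care — possibly a short case analysis on whether the intermediate big vertex's layer is $\le \ell_1$, between $\ell_1$ and $\ell_2$, or $> \ell_2$ — but in each case the rigidity of the index arithmetic (everything a multiple of the relevant power of two, with $x_1/2^{k-\ell_1+g}$ and $x_2/2^{k-\ell_2+g}$ odd) should leave no room for two disjoint such configurations.
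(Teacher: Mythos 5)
Your Step 1 is essentially the paper's opening paragraph and is sound: a standard path that is not an overpass must contain a big vertex with index strictly between $x_1$ and $x_2$. (Your earlier phrasing that such an internal big vertex ``would have to be an endpoint of another path in $\mathcal{R}$ or create an internal adjacency, contradiction'' is wrong --- an internal big vertex on a single path causes no contradiction by itself, as you yourself correctly note a few lines later --- but the ``Concretely, I would organize it as follows'' version of Step 1 is stated correctly.)

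The genuine gap is in Step 2, which you acknowledge you have not carried out: ``the presence of an intermediate big vertex on $R_i^*$ pins $R_i^*$ to occupy a layer-stretch that a second non-overpass path would also need to occupy, violating anticompleteness'' is not what happens, and ``rigidity of index arithmetic should leave no room'' is not an argument. The two intermediate big vertices $c_1 \neq c_2$ on the two non-overpass interiors are indeed non-adjacent, as you observe, and nothing about them alone violates anything. The paper's actual mechanism is different and more indirect: among all intermediate big vertices on all non-overpasses, pick $b = P_{\ell_0}^{x_0}$ with $\ell_0$ minimal, lying on $R_0^*$. Any other non-overpass $R^*$ must visit index $x_0$ at some layer $\ell$, and since $b = P_{\ell_0}^{x_0}$ is a big vertex and the interiors are pairwise anticomplete, necessarily $\ell < \ell_0$ (otherwise $P_\ell^{x_0}$ is $b$ or a medium neighbor of $b$). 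That vertex $P_\ell^{x_0}$ is small, so $R^*$ switched onto layer $\ell$ at some index $x'$, passing through a big vertex of layer at most $\ell < \ell_0$ there; by minimality of $\ell_0$, $x'$ lies outside $(x_1,x_2)$. Repeating the same argument at the maximal index $x''$ up to which $R^*$ stays on $P_\ell$ then forces $R^*$ to contain all of $P_\ell[x_1:x_2]$ --- and by minimality of $\ell_0$ that stretch contains no big vertex, so $R^*$ is an overpass after all, a contradiction. So the second non-overpass is not shown to collide with the first; it is shown to be an overpass in disguise. Your proposal has the right target and the right first step, but it does not supply the extremal choice ($\ell_0$ minimal) nor the chain of forced layer-switches that makes the contradiction go through, so as written it is an outline with the key step missing.
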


\begin{proof}
    For $R^* \in \mathcal{R}'$, say $v \in V(R^*)$ is an \emph{internal big vertex of\/ $R^*$} if $v \in B(G_k^g)$ and $v$ has index greater than $x_1$ and smaller than $x_2$.

    We first show that if $R^* \in \mathcal{R}'$ is not an overpass, then $R^*$ contains some internal big vertex $b = P_\ell^x$. Indeed, for each $x'$ with $x_1 \leq x' \leq x_2$, the path $R^*$ includes a vertex $P_{\ell'}^{x'}$ for some $\ell'$ with $1 \leq \ell' \leq k$. If, for some $\ell'$, $R^*$ contains all of $P_{\ell'}[x_1:x_2]$, then one of the vertices of $P_{\ell'}[x_1:x_2]$ is a big vertex as otherwise $R^*$ would be an overpass. On the other hand, if there is no $\ell'$ such that $R^*$ contains $P_{\ell'}^{x'}$ for all $x'$ with $x_1 \leq x' \leq x_2$, then $R^*$ switches layers at some index $x$ that is strictly between $x_1$ and $x_2$, and so $R^*$ contains a big vertex with index~$x$.

    Now assume that there is at least one element of $\mathcal{R}'$ that is not an overpass; let $b = P_{\ell_0}^{x_0}$ be a big vertex contained in some non-overpass of $\mathcal{R}'$ such that $x_1 < x_0 < x_2$ and $\ell_0$ is minimal. Let $R_0^*$ be the element of $\mathcal{R}'$ containing $b$, and suppose there is some other $R^* \in \mathcal{R}'$ that is not an overpass. The path $R^*$ contains some vertex with index $x_0$, say $v = P_{\ell}^{x_0}$. It follows that $\ell < \ell_0$, as otherwise $b$ and $v$ would be adjacent or equal. Note that $v$ is a small vertex as $b$ is a big vertex and $v$ has strictly smaller layer than $b$. Thus, there is some index $x'$ such that $R^*$ switches layers to $\ell$ at $x'$ and such that $P_\ell[x' : x_0]$ is contained in the subpath of $R^*$ from $R^-$ to $v$. It follows that $R^*$ includes a big vertex with layer at most $\ell$ and index $x'$. By the choice of $b$ and the fact that $\ell < \ell_0$, we deduce that either $x' < x_1$ or $x' > x_2$. The two cases are analogous; we show in detail how to handle the case where $x' < x_1$.

    We now show that $R^*$ contains $P_\ell[x_0:x_2]$ (and thus all of $P_\ell[x_1:x_2]$). Indeed, suppose this is not the case, and let $x''$ be maximal such that $P_\ell[x_0:x'']$ is contained in $R^*$, where $x_0 \leq x'' < x_2$. Since $R^*$ does not contain $P_\ell^{x'' + 1}$, $R^*$ switches layers at index $x''$ from $\ell$ to some other layer, and thus $R^*$ contains a big vertex with layer at most $\ell$ and index $x''$. But $\ell < \ell_0$ and $x_1 < x'' < x_2$, so this contradicts the choice of $b$.
\end{proof}

\begin{lemma} \label{leq-2-overpasses}
    $\mathcal{R}'$ contains at most two overpasses.
\end{lemma}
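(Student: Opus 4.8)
The plan is to pin down the structure of a standard overpass and then show that the two ways in which such a path can reach $b_2$ are each used at most once. Keep the notation $b_1=P_{\ell_1}^{x_1}$, $b_2=P_{\ell_2}^{x_2}$ with $\ell_1\le \ell_2$ and $x_1<x_2$; since $b_1,b_2$ are big, $x_1$ is an odd multiple of $2^{k-\ell_1+g}$ and $x_2$ an odd multiple of $2^{k-\ell_2+g}$, so $x_1,x_2$ are even and every vertex of index $x_1\pm1$ or $x_2\pm1$ is small. For a standard $R\in\mathcal R$ that is an overpass realised by a layer $\ell$, I would first record: since $b_1$ is an endpoint of $R$, its only neighbour in $R$ is $R^-$, so $R$ (and any standard path) avoids $P_{\ell_1}^{x_1\pm1}$ and $P_{\ell_2}^{x_2\pm1}$; since $b_1=P_{\ell_1}^{x_1}\in V(P_{\ell_1}[x_1:x_2])$ and $b_2=P_{\ell_2}^{x_2}\in V(P_{\ell_2}[x_1:x_2])$ while $b_1,b_2\notin V(R^*)$, we get $\ell\notin\{\ell_1,\ell_2\}$; in fact $\ell<\ell_2$, because if $\ell>\ell_2$ then, as $x_1$ is a multiple of $2^{k-\ell+g}$ and $x_2-x_1$ is a positive multiple of $2^{k-\ell_2+g}>2^{k-\ell+g}$, the interval $[x_1,x_2]$ contains an odd multiple of $2^{k-\ell+g}$, i.e.\ a big vertex on layer $\ell$, contradicting that $R$ is an overpass realised by $\ell$; and if $\ell>\ell_1$ then $P_\ell^{x_1}$ is a medium vertex with big neighbour $b_1$, and since $P_\ell^{x_1}\in V(P_\ell[x_1:x_2])\subseteq V(R^*)$ and $R$ is induced, necessarily $R^-=P_\ell^{x_1}$.

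Next I would describe the shape of $R$. Note $P_\ell^{x_2}$ is small: it would be big on layer $\ell$ only if $x_2$ were an odd multiple of $2^{k-\ell+g}$, forcing $\ell=\ell_2$, and medium only if $x_2$ were an odd multiple of $2^{k-j+g}$ for some $j<\ell$, forcing $j=\ell_2<\ell$; both contradict $\ell<\ell_2$. Since $P_\ell^{x_2}$ and $P_\ell^{x_2+1}$ are small, $R$ does not leave $P_\ell$ at $x_2$, so if $y_2$ denotes the largest index with $x_2\le y_2$ and $P_\ell[x_2:y_2]\subseteq R$ then $y_2>x_2$ and $P_\ell^{y_2}$ is big or medium on layer $\ell$; symmetrically on the $b_1$-side the flat block of $R$ reaches a big-or-medium $P_\ell^{y_1}$ with $y_1<x_1$, unless $R^-=P_\ell^{x_1}$ (i.e.\ $\ell>\ell_1$), in which case its left end is $P_\ell^{x_1}$, adjacent to $b_1$. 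Thus $R$ splits into a \emph{flat part} $P_\ell[y_1:y_2]$ on layer $\ell$ and two \emph{tails} — the subpath from $b_1$ to $P_\ell^{y_1}$ and the subpath from $P_\ell^{y_2}$ to $b_2$ — which are pairwise internally disjoint and internally disjoint from every flat part, and which leave $b_1$ / enter $b_2$ along spokes. Writing $R^+=P_{\ell_2'}^{x_2}$ with $\ell_2'>\ell_2$, the vertex preceding $R^+$ on the right tail is a neighbour of the medium vertex $P_{\ell_2'}^{x_2}$ other than $b_2$, hence $P_{\ell_2'}^{x_2-1}$ or $P_{\ell_2'}^{x_2+1}$; accordingly I will say $R$ \emph{approaches $b_2$ from the left} or \emph{from the right}. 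In particular the right tail is always non-trivial.

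The main step — and the one I expect to be the real obstacle — is to show that at most one overpass approaches $b_2$ from the left and at most one from the right, which yields the bound $2$. Suppose $R,R'$ are overpasses realised by layers $\ell,\ell'$, both approaching $b_2$ from the left, via distinct spokes $P_\mu^{x_2},P_{\mu'}^{x_2}$ with $\mu,\mu'>\ell_2$ and, say, $\mu<\mu'$. Following each right tail from $b_2$, it goes to its spoke, then to $P_\mu^{x_2-1}$ (resp.\ $P_{\mu'}^{x_2-1}$), and then — since these vertices are small and the tail avoids $P_{\ell_2}^{x_2\pm1}$ — it runs leftward along $P_\mu$ (resp.\ $P_{\mu'}$) until it hits the first big vertex of that layer, at index $x_2-2^{k-\mu+g}$ (resp.\ $x_2-2^{k-\mu'+g}$), and from there must climb up to layer $\ell<\ell_2$ (resp.\ $\ell'<\ell_2$). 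I would track the climb using the recursive powers-of-two structure of $G_k^g$: ascending from a layer $\nu$ to $\nu-1$ forces the tail to a medium vertex of layer $\nu$, i.e.\ a controlled further shift of the index to the left; carried out greedily, $R'$'s tail, which must pass through every layer strictly between $\mu'$ and $\ell'$ and in particular layer $\mu$ (as $\ell'<\ell_2<\mu<\mu'$), lands on $P_\mu$ precisely at the big vertex $P_\mu^{x_2-2^{k-\mu+g}}$ already used by $R$'s tail, a contradiction — equivalently, the two tails together with $P_{\ell_2},P_\mu,P_{\mu'}$ form a planar-type subconfiguration in which two disjoint curves would have to cross. The case of two overpasses approaching $b_2$ from the right is symmetric.

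The hard part, where most of the work lies, is to rule out \emph{non-greedy} tails: a tail that at some big vertex descends along a spoke to a still lower layer rather than ascending. I would argue that such a detour can only move the tail further from its target layer $\ell<\ell_2$ and confine it to an ever narrower band of indices around $x_2$, so that it can never return to layer $\ell$ without meeting another tail or flat part; this should again reduce to the powers-of-two bookkeeping, but making it airtight (and handling the symmetric roles of the two sides of $x_2$, and the case $\ell_1=\ell_2$ where all overpasses lie below $\ell_1$) is the delicate step, and is what I expect to occupy the bulk of the proof.
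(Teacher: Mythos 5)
Your approach is genuinely different from the paper's, and it has a real gap that you yourself flag. You try to anchor each overpass on the direction (left/right at index $x_2$) from which its right tail enters $b_2$, and then argue that two overpasses sharing a direction must collide by tracking their tails as they descend through the layers. This tracking does not go through as written. Your ``greedy'' case relies on the assertion that $R'$'s tail, descending from layer $\mu'$ to the overpass layer $\ell'$, ``must pass through every layer strictly between $\mu'$ and $\ell'$,'' and in particular through layer $\mu$. That is false in $G_k^g$: a medium vertex $P_{\mu'}^y$ is adjacent to the \emph{single} big vertex $P_j^y$, where $j$ is the unique layer for which $y$ is an odd multiple of $2^{k-j+g}$, so a tail can jump from layer $\mu'$ directly to a much lower layer $j$, skipping $\mu$ entirely. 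And beyond that, you explicitly leave the ``non-greedy'' case (tails that detour upward before descending) unresolved and identify it as where the bulk of the work would lie; there is no evidence the pigeonhole on approach direction at $b_2$ can be closed at all, let alone cheaply.

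The paper avoids trajectory tracking with a cleaner anchor and a single adjacency observation. Every overpass $R_\alpha^*$ contains some big vertex with layer $<\ell_1$ (the flat part on layer $\ell<\ell_1$ ends at a big or medium vertex of $P_\ell$; in the medium case, the vertex it jumps to on a lower layer is big). Let $b_\alpha=P_{\ell_\alpha}^{x_\alpha}$ be the one closest to $R_\alpha^-$ along $R_\alpha^*$, and note $x_\alpha\neq x_1$ (otherwise $b_\alpha$ would be adjacent to $b_1$, forcing $b_\alpha=R_\alpha^-$, which has layer $>\ell_1$). If two overpasses $R_\alpha,R_\beta$ both have their anchors on the same side of $x_1$, say $x_\beta<x_\alpha<x_1$, then the subpath of $R_\beta^*$ from $R_\beta^-$ (index $x_1$) to $b_\beta$ (index $x_\beta$) contains a vertex $v$ of index $x_\alpha$; by minimality of $b_\beta$, every vertex of that subpath other than $b_\beta$ has layer $\geq\ell_1>\ell_\alpha$ (to reach any layer $<\ell_1$ the path would first have to pass through a big vertex of such a layer), so $v$ has layer $>\ell_\alpha$ and hence is adjacent to $b_\alpha$ --- contradicting internal anticompleteness. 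This one-step ``same-side anchors force a crossing'' argument is the idea your proposal is missing, and it makes the whole layer-by-layer bookkeeping you anticipate unnecessary.
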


\begin{proof}
Let $R^* \in \mathcal{R}'$ be an overpass, and let $\ell \in \mathbb{N}$ with $1 \leq \ell \leq k$ be such that $V(P_\ell[x_1:x_2]) \subseteq V(R^*)$ and $P_\ell[x_1:x_2] \cap B(G_k^g) = \varnothing$.
    Note that, due to the construction of $G_k^g$, $P_\ell[x_1:x_2] \cap B(G_k^g) = \varnothing$ implies that $\ell \leq \ell_1$. Furthermore, as $R^*$ is the interior of a path from $b_1$ to $b_2$, it does not contain any edge incident to $b_1$, so it follows that $\ell \neq \ell_1$. Thus, for every overpass $R^* \in \mathcal{R}'$, there is some $\ell \in \mathbb{N}$ with $\ell < \ell_1$ such that $P_\ell^x \in V(R^*)$ for all $x$ with $x_1 \leq x \leq x_2$, and none of these vertices are big. It follows that $R^*$ contains a big vertex $b = P^{x_0}_{\ell'}$ for some $\ell' < \ell$, where $x_0 < x_1$.

    Now suppose that $R_\alpha^*$ and $R_\beta^*$ are two distinct elements of $\mathcal{R}'$. Let $b_\alpha = P_{\ell_\alpha}^{x_\alpha}$ be the big vertex in $R_\alpha^*$ with minimal distance to $R_\alpha^-$ in $R_\alpha^*$, subject to the condition $\ell_\alpha < \ell_1$. Similarly, let $b_\beta = P_{\ell_\beta}^{x_\beta}$ be the big vertex in $R_\beta^*$ with minimal distance to $R_\beta^-$ in $R_\beta^*$, subject to the condition $\ell_\beta < \ell_1$.

    We now show that either $x_\alpha < x_1 < x_\beta$ or $x_\beta < x_1 < x_\alpha$ holds. Suppose instead that both $x_\alpha < x_1$ and $x_\beta < x_1$ are true (the case where both are greater than $x_1$ is analogous). Without loss of generality, we assume that $x_\alpha > x_\beta$. Then the subpath of $R_\beta^*$ from $R_\beta^-$ to $b_\beta$ contains some vertex $v$ with index $x_\alpha$ and layer strictly larger than $\ell_\alpha$. But then $b_\alpha$ and $v$ are adjacent, which is a contradiction.

    Now suppose that $\mathcal{R}'$ contains three or more overpasses. Then there exist $R_\alpha^*, R_\beta^* \in \mathcal{R}'$ such that either $x_\alpha < x_1$ and $x_\beta < x_1$ or $x_\alpha > x_1$ and $x_\beta > x_1$, contrary to the claim of the previous paragraph. Thus, $\mathcal{R}'$ contains at most two overpasses.
\end{proof}

\begin{corollary} \label{leq-seven-paths}
    $|\mathcal{R}| \leq 7$.
\end{corollary}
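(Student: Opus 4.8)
The plan is to obtain the bound by simply adding up the partial counts already assembled before the statement together with the two preceding lemmas. Every path in $\mathcal{R}$ is either standard or nonstandard, and these two classes are disjoint by definition, so $|\mathcal{R}|$ equals the number of standard paths plus the number of nonstandard paths; hence no double-counting can occur between the two tallies. For the nonstandard paths, I would invoke the observation made right before Lemma~\ref{non-overpass}: each nonstandard path must use one of the (at most) four edges of $P_{\ell_1}$ or $P_{\ell_2}$ incident with $b_1$ or $b_2$, and two internally anticomplete paths cannot share such an edge because its non-endpoint end is an interior vertex; so $\mathcal{R}$ contains at most four nonstandard paths.

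It then remains to bound the number of standard paths by $3$. First I would note that $R \mapsto R^*$ is injective on the standard paths in $\mathcal{R}$: the interior $R^*$ is itself a path, and the only way to complete it to a path from $b_1$ to $b_2$ is $b_1 \dd R^* \dd b_2$, so $R$ is recovered from $R^*$; consequently the number of standard paths equals $|\mathcal{R}'|$. Now every element of $\mathcal{R}'$ either is or is not an overpass, so this is a genuine partition of $\mathcal{R}'$; Lemma~\ref{non-overpass} bounds the number of non-overpasses by $1$ and Lemma~\ref{leq-2-overpasses} bounds the number of overpasses by $2$, giving $|\mathcal{R}'| \leq 3$. Combining, $|\mathcal{R}| \leq 4 + 3 = 7$.

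I do not expect any real obstacle, as all the substantive work is done in Lemmas~\ref{non-overpass} and~\ref{leq-2-overpasses} and in the counting of nonstandard paths. The only mild points to make explicit are that standard/nonstandard is a partition of $\mathcal{R}$ and overpass/non-overpass is a partition of $\mathcal{R}'$, so that the three bounds can be summed without overlap; everything else is immediate arithmetic.
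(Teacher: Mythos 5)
Your argument is correct and matches the paper's proof: both count at most four nonstandard paths (from the observation preceding Lemma~\ref{non-overpass}), at most one non-overpass standard path (Lemma~\ref{non-overpass}), and at most two overpasses (Lemma~\ref{leq-2-overpasses}), then add to get seven. The extra remarks you make about disjointness of the classes and injectivity of $R \mapsto R^*$ are harmless and just make explicit what the paper leaves implicit.
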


\begin{proof}
    $\mathcal{R}$ contains at most four nonstandard paths. Of the standard paths in $\mathcal{R}$, there are at most two overpasses by Lemma \ref{leq-2-overpasses}, and at most one non-overpass by Lemma \ref{non-overpass}. Thus, there are at most seven paths in $\mathcal{R}$.
\end{proof}

\section{No outerstring induced subgraph of large treewidth}

In this section, we prove the following.

\begin{lemma} \label{outerstring-isg-bound}
    There exists\/ $L\in \poi$ such that for all\/ $g,k\in \poi$, every induced subgraph\/ $H$ of\/ $G_k^g$ that is an outerstring graph satisfies\/ $\tw(H) \leq L$.
\end{lemma}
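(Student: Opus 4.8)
The plan is to show that every outerstring induced subgraph $H$ of $G_k^g$ has bounded treewidth by bounding the number of vertices of degree at least $4$ that can appear in $H$, and then exploiting the sparse, near-series-parallel structure established in Lemma~\ref{H'-SP}. Recall from Lemma~\ref{props} that in $G_k^g$ the only vertices of degree at least $4$ are big vertices, that distinct big vertices are at distance at least $2^g$, and that every vertex of degree $\geq 3$ has at most one neighbor of degree $\geq 3$ (so the big vertices together with their medium neighbors form a collection of ``stars'' that are pairwise far apart). The graph $H'$ obtained from $H$ by contracting each such star is series-parallel, hence has $\tw(H')\leq 2$; so the only obstruction to $H$ having small treewidth is the presence of many big vertices (each contraction can lower treewidth, but only by an amount controlled by the star sizes, which are bounded by $k$ — so that alone is not enough). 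The key point must therefore be: \emph{if $H$ is an outerstring graph, then $H$ contains only boundedly many big vertices of $G_k^g$.}

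To prove this, I would argue that a large set of big vertices in $H$ forces a structure that no outerstring graph can contain. First I would recall (this is the standard fact behind outerstring graphs, and presumably the content of Theorem~\ref{stronger} alluded to in the introduction) that outerstring graphs are closed under induced subgraphs and that they do not contain arbitrarily complicated ``grid-like'' or ``theta-like'' induced substructures after cleaning — more usefully, outerstring graphs have the property that one can find a balanced separation respecting the linear order on the $x$-axis. Concretely, I would take the curves of an outerstring representation of $H$, order their feet along the $x$-axis, and consider the big vertices $b_1,\dots,b_N$ of $G_k^g$ lying in $H$; each $b_i$ together with its surviving medium neighbors forms a connected set $Y_i$, and the $Y_i$ are pairwise far apart in $G_k^g$ (distance $\geq 2^g - 2k$, say), hence pairwise anticomplete in $H$ once $g$ is large — but we need the bound uniform in $g$, so instead I would use that the $Y_i$ are pairwise non-adjacent and each is connected, and that in $G_k^g$ each big vertex $b_i=P_{\ell_i}^{x_i}$ with $\ell_i<k$ sends a medium-edge to \emph{every} layer below it at index $x_i$, so the $Y_i$'s realize a large complete-bipartite-type or large-theta-type pattern with the paths $P_1,\dots,P_k$ once we pick several $b_i$ on a common low layer. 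This would contradict Lemma~\ref{no-bipartite-im} (no large $K_{r_0,r_0}$ induced minor) together with the fact — which I would need to establish — that an outerstring graph with many far-apart ``hub'' vertices each dominating many disjoint paths contains a large complete bipartite induced minor, or alternatively violates a known string-graph Ramsey-type bound.

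The cleanest route, and the one I would actually pursue, is: (1) let $\mathcal{B}$ be the set of big vertices of $G_k^g$ contained in $H$; (2) show that if $|\mathcal{B}| > L_0$ for an appropriate absolute constant, then $G_k^g[N_H[\mathcal{B}] \cup (\text{short connecting paths})]$, and hence $H$, contains a wide theta of width $8$ or a $W_{h_0\times h_0}$ induced minor — but $H$ being an induced subgraph of $G_k^g$ already excludes both by Lemmas~\ref{no-wall-im} and~\ref{complete-induced-minor}; this does not immediately use that $H$ is outerstring, so (3) the correct statement is that $H$ being \emph{outerstring} forbids the specific planar-embeddability pattern that forces $\tw$ up even when there is no large wall or theta, e.g. $H$ outerstring plus many big vertices yields a large ``independent set of paths crossing a common window'' which is exactly an outerstring obstruction (a large grid-of-curves), contradicting the fact that outerstring graphs exclude large walls and large $K_{t,t}$ as induced \emph{subgraphs} combined with a Gyárfás-type bound for outerstring graphs. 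I expect the main obstacle to be pinning down precisely which of these mutually exclusive substructures a large $|\mathcal{B}|$ produces — that is, correctly identifying the outerstring obstruction hiding among far-apart big vertices and phrasing the reduction so that it is genuinely the outerstring hypothesis, not merely the $G_k^g$-structure, doing the work. Once $|\mathcal{B}| \leq L_0$ is in hand, I finish by deleting $N_H[\mathcal{B}]$ (at most $L_0(k+1)$ vertices — wait, that is not bounded, so instead delete only the $\leq L_0$ big vertices themselves): $H \setminus \mathcal{B}$ has maximum degree at most $3$, and in fact, by Lemma~\ref{H'-SP} applied with this $\mathcal B$ removed, $(H\setminus\mathcal B)'=H\setminus\mathcal B$ has no big vertices of $G_k^g$, so it is series-parallel and has $\tw \leq 2$; adding back $|\mathcal B|\le L_0$ vertices gives $\tw(H)\leq L_0+2=:L$, an absolute constant, as desired.
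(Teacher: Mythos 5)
Your proposal hinges on the claim that an outerstring induced subgraph $H$ of $G_k^g$ can contain only boundedly many big vertices, and this claim is false. Consider the induced subgraph $H$ on $V(P_{k-2})$ together with, for each big vertex $P_{k-2}^x$ of $P_{k-2}$ (there are $2^{k-3}$ of them, at indices $x=b\cdot 2^{g+2}$ for odd $b$), the two vertices $P_{k-1}^x$ and $P_k^x$. One checks directly from Construction~\ref{themainconstruction} that no further edges arise, so $H$ is a caterpillar: the spine $P_{k-2}$ with two leaves attached at each of its $2^{k-3}$ big vertices. Caterpillars are interval graphs, interval graphs are outerstring, and here $H$ has $2^{k-3}$ vertices of degree four, which tends to infinity with $k$. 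So there is no absolute constant $L_0$ as in step (1)--(3) of your ``cleanest route'', and your final step (delete $\mathcal{B}$, add it back) cannot get off the ground. You half-notice the difficulty yourself when you observe that your substructure arguments ``do not immediately use that $H$ is outerstring''; the truth is that no amount of repair makes this plan work, because the conclusion it is trying to establish is simply not true.

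What the outerstring hypothesis actually buys, and what the paper uses, is much more local: $\ell$-long thetas are not outerstring for $\ell\geq 4$ (Lemma~\ref{long-theta}, from~\cite{polygon-visibility-graphs}), so an outerstring $H$ contains no long theta. This is \emph{not} used to bound the number of big vertices in $H$ or the degree of $H$. Instead, the paper runs a balanced-separator argument (via Lemmas~\ref{sep-to-tw} and~\ref{tw-to-sep}). First, as you correctly noted, $H'$ from Lemma~\ref{H'-SP} has $\tw(H')\leq 2$ and hence a $w'$-balanced separator $K'$ of size at most~$3$; and as you also correctly noted, lifting $K'$ to $H$ fails because a bag $N_H^M[b]$ can have size up to $k$. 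The paper's fix is to build a second auxiliary graph $H''$, whose vertices are the medium neighbors (as singletons) of the ``troublesome'' high-degree big vertices in $K'$, together with the components of $H'\setminus K'$ contracted to single vertices. The no-long-theta property of $H$ (this is the only place the outerstring hypothesis enters) is used in Lemma~\ref{H''-bounded-degree} to show that $H''$ has maximum degree less than~$9$: a high-degree vertex of $H''$ would, via Lemma~\ref{three-vertices}, force either a violation of Lemma~\ref{props-H}\eqref{props-H-4} or a long theta in $H$. Since $H''$ is an induced minor of $G_k^g$ with bounded degree and no large wall (Lemma~\ref{no-wall-im}), Korhonen's Theorem~\ref{bound-no-wall-isg} gives $\tw(H'')\leq L''$. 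A small $w''$-balanced separator of $H''$, suitably augmented (Lemma~\ref{H''-balanced-sep}), then lifts to a $w$-balanced separator of $H$ of size $21+9(L''+1)$ (Lemma~\ref{H-balanced-sep}), and Lemma~\ref{sep-to-tw} finishes. So the missing idea in your proposal is precisely this two-level separator construction together with the long-theta exclusion; the quantity that the outerstring hypothesis bounds is the degree of $H''$, not the count of big vertices in $H$.
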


We need several results from the literature. Let $G$ be a graph, and let $w\colon V(G) \to [0,1]$. For $X \subseteq V(G)$, we write $w(X)=\sum_{x \in X} w(x)$, and for a subgraph $H$ of $G$ (not necessarily induced), we write $w(H)$ for $\sum_{x \in V(H)} w(x)$. We say that $w$ is a \emph{weight function on\/ $G$} if $w(G) = 1$, and a \emph{weak weight function on\/ $G$} if $w(G) \leq 1$ (so all weight functions are weak weight functions). A set $X \subseteq V(G)$ is a \emph{$w$-balanced separator in\/ $G$} if $w(D) \leq \frac{1}{2}$ for every component $D$ of $G \setminus X$. Treewidth and balanced separators are closely related through the following lemmas.

\begin{lemma}[\cite{wallpaper, induced-subgraphs-5,params-tied-to-tw, RS-GMII}]
\label{sep-to-tw}
    Let\/ $m\in \poi$, and let\/ $G$ be a graph such that for every weight function\/ $w$ on\/ $G$, there is a\/ $w$-balanced separator\/ $X_w$ in\/ $G$ with\/ $|X_w| \leq m$. Then\/ $\tw(G) \leq 2m$.
\end{lemma}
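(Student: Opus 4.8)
The result is a classical fact; the plan is to prove it by recursively constructing a tree decomposition of $G$ of width at most $2m$, using the small balanced separators as the splitters. One first records that the hypothesis is inherited by induced subgraphs: if $H$ is an induced subgraph of $G$ and $w$ is a weight function on $H$, extend $w$ to a weight function on $G$ by putting weight $0$ on $V(G)\setminus V(H)$, take a $w$-balanced separator $X$ of $G$ with $|X|\le m$, and observe that $X\cap V(H)$ is a $w$-balanced separator of $H$ with at most $m$ vertices, since every component of $H\setminus(X\cap V(H))$ lies inside a component of $G\setminus X$ and so has $w$-weight at most $\tfrac12$. Thus every induced subgraph of $G$ satisfies the same hypothesis, and we may recurse on induced subgraphs freely.

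The statement to prove by induction on $|V(G)|$ is: for every $W\subseteq V(G)$ with $|W|\le 2m$, the graph $G$ has a tree decomposition of width at most $2m$ with some bag containing $W$; the lemma is the case $W=\varnothing$. If $|V(G)|\le 2m+1$ a single bag suffices; if $G$ is disconnected, decompose its components separately and glue their decompositions at a new bag equal to $W$. Otherwise $G$ is connected with $|V(G)|\ge 2m+2$. Take the weight function $w$ uniform on $W$ (uniform on $V(G)$ if $W=\varnothing$) and let $X$ be a $w$-balanced separator with $|X|\le m$; since $w(G)=1>\tfrac12$ and $G$ is connected, $X\ne\varnothing$. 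For each component $C$ of $G\setminus X$ the $\tfrac12$-balanced property gives $|W\cap C|\le|W|/2\le m$, so the pair $\bigl(G[C\cup X],\,(W\cap C)\cup X\bigr)$ again meets the inductive hypothesis, as $|(W\cap C)\cup X|\le m+m=2m$. Recurse on each such pair, then assemble the resulting decompositions into one for $G$ by inserting bags that contain $X$ (so that the copies of $X$ occupy a single subtree) together with one further bag, built from $W$ and the small set $X$, placed so that all of $W$ sits in one bag.

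Two points require the real work. The first is termination: the recursion must descend to strictly smaller graphs, which is automatic as soon as $G\setminus X$ has at least two components but needs a short extra argument when $G\setminus X$ is connected, handled by shrinking $W$ or perturbing $w$ as in the references. The second, and the main obstacle, is the bookkeeping on bag sizes: one must carry out the assembly so that every newly created bag still has at most $2m+1$ vertices. This is exactly where the constant $2$ comes from --- it is the reflection of the factor $\tfrac12$ in the definition of a balanced separator (a $\tfrac23$-balanced hypothesis yields only $\tw(G)\le 3m$ by the same scheme), and the interface sets $(W\cap C)\cup X$ stay of size at most $2m$ precisely because $|W\cap C|\le|W|/2$ and $|X|\le m$. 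For this bookkeeping I would follow the arguments in \cite{wallpaper, induced-subgraphs-5, params-tied-to-tw, RS-GMII}. (Alternatively, via the Seymour--Thomas duality between treewidth and brambles, it suffices to produce from any bramble of order at least $2m+2$ a weight function admitting no $\tfrac12$-balanced separator of size $\le m$, using that whenever $X$ has fewer vertices than the order of the bramble, a single component of $G\setminus X$ contains every bramble element disjoint from $X$.)
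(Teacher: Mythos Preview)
The paper does not prove this lemma; it is quoted as a known fact with citations to the literature, so there is no in-paper argument to compare your attempt against.

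Your outline is the standard recursive construction and is sound as a plan: the hereditary step is correct, the strengthened inductive hypothesis (a tree decomposition of width at most $2m$ with a prescribed set $W$, $|W|\le 2m$, contained in a bag) is the right target, and you correctly isolate the two nontrivial points --- termination when $G\setminus X$ has only one component, and the bag-size bookkeeping during assembly --- before deferring them to the cited references. As written this is a sketch rather than a proof. On the bookkeeping in particular, note that the obvious assembly via a root bag $W\cup X$ only yields width $3m$, and a star rooted at the bag $W$ with children $X\cup(W\cap C)$ violates the connectivity axiom for vertices of $X\setminus W$ (such a vertex lies in every child but not in the root). Achieving the constant $2m$ genuinely requires the more careful gluing carried out in the cited papers, or a fully worked-out version of the bramble/duality argument you allude to at the end; your parenthetical identifies the right ingredient (for $|X|$ below the bramble order, a single component of $G\setminus X$ swallows all bramble sets missing $X$), but turning that into a weight function with no small $\tfrac12$-balanced separator is itself a short argument that you have not supplied.
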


\begin{lemma}[\cite{induced-subgraphs-5, cygan, RS-GMII}]
\label{tw-to-sep}
    For every graph\/ $G$ and every weak weight function\/ $w$ on\/ $G$, there is a\/ $w$-balanced separator in\/ $G$ of size at most\/ $\tw(G)+1$.
\end{lemma}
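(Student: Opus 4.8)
The plan is to combine Lemma~\ref{H'-SP} with the two separator lemmas. Fix $g,k$ and an outerstring induced subgraph $H$ of $G_k^g$, and let $H'$ be the series-parallel graph produced from $H$ as in Lemma~\ref{H'-SP}, obtained by contracting, for each $b\in B(G_k^g)\cap V(H)$, the induced star $N_H^M[b]$ to a single vertex; write $\pi\colon V(H)\to V(H')$ for the quotient map, so each fiber $\pi^{-1}(x)$ is either a single vertex of $H$ or a set of the form $N_H^M[b]$. (These sets are pairwise disjoint and each induces a star in $H$, since every medium vertex of $G_k^g$ has exactly one big neighbour and no two medium neighbours of a big vertex are adjacent.) By Lemma~\ref{H'-SP}, $\tw(H')\le 2$.

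By Lemma~\ref{sep-to-tw} it suffices to find an absolute constant $m$ such that every weight function $w$ on $H$ admits a $w$-balanced separator of size at most $m$ in $H$; then $\tw(H)\le 2m=:L$. So fix a weight function $w$ on $H$ and push it forward to a weight function $w'$ on $H'$ via $w'(x)=w(\pi^{-1}(x))$; note $w'(H')=w(H)=1$. By Lemma~\ref{tw-to-sep}, $H'$ has a $w'$-balanced separator $X'$ with $|X'|\le\tw(H')+1\le 3$. Its preimage $\pi^{-1}(X')$ is a separator of $H$, because the preimage of a separator under a contraction is again a separator; and it is $w$-balanced, because $\pi$ sends each component of $H\setminus\pi^{-1}(X')$ into a single component of $H'\setminus X'$, so the former has $w$-weight at most the $w'$-weight of the latter, which is at most $\tfrac12$. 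The only defect of $\pi^{-1}(X')$ is its size: a star-fiber $N_H^M[b]$ may contain up to $k$ vertices, and $X'$ can contain as many as three star-fibers.

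The crux is to prune each star-fiber appearing in $X'$ down to its center together with only boundedly many of its medium leaves, without spoiling balance. The relevant local fact is that a medium leaf $m=P_j^y\in N_H^M[b]$ touches the rest of $H$ only through its (at most two) path-neighbours $P_j^{y-1}$, $P_j^{y+1}$ on the layer-path $P_j$, so removing $m$ from the separator merely re-attaches $m$, and nothing else, to the component(s) containing those two vertices. Accordingly, call a medium leaf $m$ of a star-fiber $S_i\in X'$ \emph{relevant} if removing $m$ from $\pi^{-1}(X')$ would either merge two components or raise some component's weight above $\tfrac12$, and let $X$ consist of the preimages of the singleton blobs of $X'$, the centers $b_1,\dots,b_q$ of the star-fibers $S_1,\dots,S_q$ of $X'$ (with $q\le 3$), and all relevant medium leaves of $S_1,\dots,S_q$. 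Then $X$ is again a $w$-balanced separator of $H$ (a routine additional precaution, which we suppress, controls the cumulative weight of the many low-weight leaves that get discarded into a common component, e.g.\ by first absorbing the finitely many high-weight vertices into the separator), and $|X|\le 3+3C$ as soon as each star-fiber has at most $C$ relevant medium leaves; by Lemma~\ref{sep-to-tw} this gives $\tw(H)\le 2(3+3C)=:L$.

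Bounding by an absolute constant the number of relevant medium leaves of a single big vertex is the heart of the proof, and the only place the outerstring hypothesis enters -- it fails for $H=G_k^g$ itself, where a big vertex may have $\Omega(k)$ medium leaves each carrying a long layer-path on both sides. Concretely, one must rule out the configuration in which a big vertex $b=P_\ell^y$ has many medium leaves $m_1,\dots,m_t$ such that, for each $i$, the two halves of the layer-path through $m_i$ persist in $H$ and lie in distinct components of $H'\setminus X'$; equivalently, $H$ contains $b$ together with many neighbours $m_1,\dots,m_t$, each $m_i$ issuing two long ``arms'' along its layer-path, with all $2t$ arms pairwise separated by a bounded vertex set. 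I would obtain a contradiction for $t$ exceeding an absolute constant from an outerstring representation of $H$: the curve $c_b$ must cross every $c_{m_i}$; from each $c_{m_i}$ emanate two long path-curves; these $2t$ arm-curves are pairwise non-crossing; and every one of them must have an endpoint on the $x$-axis while being forbidden to cross $c_b$ (no arm vertex is adjacent to $b$). Reading off the order in which the grounding points of $c_b$, the $c_{m_i}$, and the arm-curves occur along the $x$-axis, one finds that for $t$ larger than a constant this collection of crossing and non-crossing constraints cannot be met. Turning this geometric picture into a clean combinatorial obstruction -- identifying precisely which medium leaves can be relevant and showing only a bounded number can coexist in an outerstring graph -- is the step I expect to require the most care; everything preceding it is bookkeeping with the separator lemmas.
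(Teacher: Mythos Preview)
Your proposal does not address the stated lemma at all. Lemma~\ref{tw-to-sep} is a standard fact from the literature (Robertson--Seymour, Cygan et al., etc.), quoted in the paper without proof: given a graph of treewidth $t$ and a weak weight function, some bag of an optimal tree decomposition serves as a balanced separator of size at most $t+1$. The paper does not prove it, and neither do you; in fact you \emph{invoke} Lemma~\ref{tw-to-sep} twice in your argument (once for $H'$ and implicitly once more in the role of Lemma~\ref{sep-to-tw}'s converse). What you have written is a sketch of a proof of Lemma~\ref{outerstring-isg-bound}, not of Lemma~\ref{tw-to-sep}.

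Even read as an attempt at Lemma~\ref{outerstring-isg-bound}, your approach diverges from the paper's and is incomplete at the crucial point. The paper does not try to prune the star-fibers by a direct geometric argument in the outerstring representation. Instead, it builds an auxiliary induced minor $H''$ whose vertices are the individual medium leaves of the high-degree big vertices in $K'$ together with the components of $H'\setminus K'$, and then argues combinatorially: if some component is adjacent to three medium leaves of the same big vertex $b$, Lemma~\ref{three-vertices} produces a long induced theta in $H$ (using the distance property of big vertices), which by Lemma~\ref{long-theta} is forbidden in outerstring graphs. Hence $H''$ has bounded degree, and since it is an induced minor of $G_k^g$ it excludes a fixed wall; Korhonen's theorem then bounds $\tw(H'')$, and Lemma~\ref{tw-to-sep} applied to $H''$ yields the bounded separator you were trying to construct by hand. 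Your ``crux'' paragraph -- reading off crossing constraints of arm-curves along the $x$-axis -- is precisely the step the paper avoids, and you yourself flag it as unfinished; the parenthetical about ``a routine additional precaution, which we suppress'' for cumulative weights of discarded leaves is another genuine gap, since many low-weight leaves can indeed land in the same component and push it over $\tfrac12$.
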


We also need the following.

\begin{theorem}[Korhonen \cite{grid-induced-minor-theorem}] \label{bound-no-wall-isg}
    For all\/ $d,t\in \poi$, there exists\/ $L\in \poi$ such that if\/ $G$ is a graph of maximum degree at most\/ $d$ that does not contain any subdivision of\/ $W_{t \times t}$ or the line graph of any subdivision of\/ $W_{t \times t}$ as an induced subgraph, then\/ $\tw(G)\leq L$.
\end{theorem}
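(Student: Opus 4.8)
The plan is to prove the contrapositive: if $\Delta(G)\le d$ and $\tw(G)$ is large, then $G$ contains, as an induced subgraph, a subdivision of $W_{t\times t}$ or the line graph of a subdivision of $W_{t\times t}$. The starting point is Theorem~\ref{wallminor}: once $\tw(G)\ge f(m)$, the graph $G$ contains a subdivision of $W_{m\times m}$ as a subgraph. Restricting such a subdivision to a suitably spaced sub-wall — keeping roughly every $(m/t')$-th horizontal and vertical line of bricks — turns each super-edge into a concatenation of $\Theta(m/t')$ edges of the original subdivision. Hence, for any target dimension $t'$ and any target length $\ell$, we obtain a subgraph $W\subseteq G$ that is a subdivision of $W_{t'\times t'}$ in which every subdivision path has length at least $\ell$, provided $m\ge C\,t'\ell$ for an absolute constant $C$. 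Here $t'$ and $\ell$ will be chosen as appropriate functions of $d$ and $t$, so this step only costs $\tw(G)\ge f(C\,t'\ell)$.

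Vertices of $G$ outside $V(W)$ play no role: to exhibit an induced subdivided wall (or induced line graph of one) it suffices to find a set $S\subseteq V(W)$ with $G[S]$ of the desired isomorphism type, and then simply discard every vertex not in $S$. Thus the entire difficulty lies in the \emph{chords} of $W$, i.e.\ the edges of $G$ with both ends in $V(W)$ that are not edges of $W$. One wants to \emph{clean} $W$: to reroute each super-edge of the wall through a chord-free portion of a long subdivision path, so that the induced subgraph on the rerouted paths is again a subdivision of a (slightly smaller) wall. Bounded degree is what makes this plausible, since each vertex of $W$ is an end of at most $d$ chords, so all but a $O(d/\ell)$ fraction of the vertices on any length-$\ell$ subdivision path are chord-free; the obstruction is that a single chord may have its two ends on subdivision paths lying far apart in $W$, and there may be many such ``long'' chords. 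The argument is that after passing to yet another sub-wall one may assume the chord behaviour is ``uniform'', and then either all chords can be simultaneously avoided — yielding an induced subdivision of $W_{t\times t}$ — or the chords are forced into a rigid local configuration (a synchronized ladder between consecutive near-parallel subdivision paths, or a cluster around a degree-$3$ branch vertex) whose induced structure is precisely that of the line graph of a subdivided wall — yielding an induced line graph of a subdivision of $W_{t\times t}$. Turning this dichotomy into a proof, via an iterated-pigeonhole / Ramsey-type argument on the chord pattern between near-parallel subdivision paths together with a uniformisation step, is the heart of Korhonen's argument~\cite{grid-induced-minor-theorem}, and it is the step I expect to be the main obstacle; the reduction to it — the Grid Minor Theorem plus the sub-wall trick — is routine.

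Once such an $S$ has been found, the proof is complete, since $G[S]$ is an induced subgraph of $G$ of the required form and the induced subdivided wall (resp.\ induced line graph of a subdivided wall) it realises can be taken to have dimension $t$. We remark that in the triangle-free setting relevant to this paper (the graphs $G_k^g$ are triangle-free), the second alternative is vacuous for $t\ge 3$, because the line graph of a subdivided $W_{t\times t}$ contains a triangle coming from each degree-$3$ branch vertex; so in that case the cleaning can only terminate in the first outcome, an induced subdivision of $W_{t\times t}$.
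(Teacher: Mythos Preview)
The paper does not prove this theorem at all: it is stated as a black-box citation of Korhonen~\cite{grid-induced-minor-theorem} and then invoked once, in the proof of Corollary~\ref{H''-tw}. There is therefore no ``paper's own proof'' to compare against.

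As for the proposal itself, what you have written is a proof \emph{plan}, not a proof. The reduction you describe (Grid Minor Theorem, then pass to a spaced sub-wall to make all subdivision paths long) is indeed routine and correct. But the substantive step --- cleaning the chords of a long-subdivided wall in a bounded-degree host so as to produce either an induced subdivided wall or an induced line graph of one --- is exactly the content of Korhonen's paper, and you explicitly defer it (``it is the step I expect to be the main obstacle''). So the proposal does not stand on its own; it is an accurate high-level summary of where the difficulty lies, together with a pointer back to the very reference the theorem is attributed to. If the intent is merely to explain why the cited theorem is plausible, the sketch is fine; if the intent is to supply an independent proof, the central argument is missing.
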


A \emph{theta} is a graph $T$ consisting of two non-adjacent vertices $a$ and $b$ and three internally anticomplete paths $P_1$, $P_2$, and $P_3$ from $a$ to $b$, each of length at least $2$, and no other vertices or edges. We call $a$ and $b$ the \emph{ends of\/ $T$}, and the \emph{length of\/ $T$} is $\dist_T(a,b)$. We say that $T$ is an \emph{$\ell$-long theta} if its length is at least $\ell$.

Next, we recall a result implicit in \cite{polygon-visibility-graphs}, that $\ell$-long thetas are not outerstring graphs for $\ell \geq 4$. Since the class of outerstring graphs is hereditary, it follows that every graph containing an
$\ell$-long theta for $\ell \geq 4$ as an induced subgraph is not an outerstring graph either. This will be the main tool in the proof that every induced subgraph of our construction either has small treewidth or is not an outerstring graph.

Let $G$ be a graph, and let $\prec$ be a linear order on $V(G)$. For $X \subseteq V(G)$, we let $\prec_X$ denote the restriction of $\prec$ to the set $X$. We say that the outerstring representation of $G$ is \emph{$\prec$-constrained} if for all $u,v \in V(G)$, we have $u \prec v$ if and only if the point at which the curve corresponding to $u$ intersects the $x$-axis is to the left of the point at which the curve corresponding to $v$ intersects the $x$-axis. It follows that, for every $X\subseteq V(G)$, the set of all curves in the representation corresponding to the vertices in $X$ forms a $\prec_X$-constrained outerstring representation of $G[X]$. In particular, we have the following.

\begin{lemma} \label{non-outerstring-criteria}
    Let\/ $G$ be a graph. Assume that for every linear order\/ $\prec$ on\/ $V(G)$, there exists\/ $X \subseteq V(G)$ such that\/ $G[X]$ admits no\/ $\prec_X$-constrained outerstring representation. Then\/ $G$ is not an outerstring graph.
\end{lemma}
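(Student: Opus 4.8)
The plan is to argue by contrapositive: assume $G$ \emph{is} an outerstring graph, fix an outerstring representation of $G$, and produce a single linear order $\prec$ on $V(G)$ that defeats the hypothesis, i.e.\ such that for \emph{every} $X \subseteq V(G)$ the graph $G[X]$ does admit a $\prec_X$-constrained outerstring representation. The natural candidate for $\prec$ is exactly the one read off from the chosen representation: for $u,v \in V(G)$, set $u \prec v$ precisely when the endpoint of the curve for $u$ on the $x$-axis lies to the left of the endpoint of the curve for $v$. (If two curves happen to share an endpoint on the $x$-axis, perturb one of them slightly along the axis without changing any intersections, so that all axis-endpoints become distinct; this is possible since there are finitely many curves.) By construction, $\prec$ is a linear order on $V(G)$ and the fixed representation of $G$ is $\prec$-constrained.

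The one substantive point to verify is the ``in particular'' sentence preceding the lemma: that restricting a $\prec$-constrained outerstring representation of $G$ to the curves of an arbitrary subset $X$ yields a $\prec_X$-constrained outerstring representation of $G[X]$. This is essentially immediate from the definitions. First, the restricted family of curves is an outerstring representation of $G[X]$: each curve still lies in the closed upper half-plane with exactly one endpoint on the $x$-axis, and for $u,v \in X$ the curves for $u$ and $v$ intersect iff $uv \in E(G)$ iff $uv \in E(G[X])$, since $G[X]$ is an \emph{induced} subgraph. Second, it is $\prec_X$-constrained: for $u,v \in X$ we have $u \prec_X v$ iff $u \prec v$ iff the axis-endpoint of $u$ is left of that of $v$ in the original representation, and these axis-endpoints are unchanged by restriction. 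So the restricted representation witnesses that $G[X]$ admits a $\prec_X$-constrained outerstring representation, for every $X \subseteq V(G)$.

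Putting the pieces together: if $G$ is an outerstring graph, then the order $\prec$ above is a linear order on $V(G)$ for which \emph{no} $X \subseteq V(G)$ gives a $G[X]$ without a $\prec_X$-constrained outerstring representation, directly contradicting the hypothesis. Hence $G$ is not an outerstring graph, as claimed. There is no real obstacle here; the only thing requiring a word of care is handling coincident axis-endpoints, which is dispatched by the finite perturbation argument above, and making explicit (as the paragraph before the lemma statement already does) that ``induced'' is what makes the restricted representation represent $G[X]$ rather than merely a spanning subgraph of it.
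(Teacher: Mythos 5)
Your proof is correct and takes essentially the same approach the paper has in mind: the paper does not give a formal proof of this lemma but instead derives it as an ``in particular'' from the observation in the preceding paragraph, namely that restricting a $\prec$-constrained outerstring representation of $G$ to the curves of any $X \subseteq V(G)$ yields a $\prec_X$-constrained outerstring representation of $G[X]$; you simply spell out the contrapositive and add the (harmless, finite) perturbation step to ensure distinct axis-endpoints give a genuine linear order.
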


The following is implicit in \cite{polygon-visibility-graphs}.

\begin{lemma} \label{long-theta}
    For all\/ $\ell \geq 4$, $\ell$-long thetas are not outerstring graphs.
\end{lemma}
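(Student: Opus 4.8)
The plan is to apply Lemma~\ref{non-outerstring-criteria}. Let $T$ be an $\ell$-long theta with ends $a,b$ and internally anticomplete paths $P_1,P_2,P_3$ from $a$ to $b$, each of length at least $\ell\geq 4$; it suffices to show that for every linear order $\prec$ on $V(T)$ the graph $T$ itself admits no $\prec$-constrained outerstring representation. Fix such a $\prec$; by the symmetry of $T$ exchanging $a$ and $b$ (together with the reflection of the $x$-axis), we may assume $a\prec b$. Suppose for contradiction that a $\prec$-constrained outerstring representation exists, assigning to each $v\in V(T)$ a grounded curve $\gamma_v$ with ground point $g_v$, where $g_u$ lies left of $g_v$ exactly when $u\prec v$. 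For each $i$ let $s_i$ (resp.\ $t_i$) be the neighbor of $a$ (resp.\ $b$) on $P_i$. Since $P_i$ has length at least $4$, we get $s_i\neq t_i$, $s_it_i\notin E(T)$, $s_i\not\sim b$, $t_i\not\sim a$, and there is a path $Q_i\subseteq P_i$ from $s_i$ to $t_i$ anticomplete to $\{a,b\}$ and to $Q_j$ for $j\neq i$. Writing $Y_i=\bigcup_{v\in V(Q_i)}\gamma_v$, each $Y_i$ is connected (consecutive curves along $Q_i$ meet), the $Y_i$ are pairwise disjoint, $Y_i$ meets $\gamma_a$ (inside $\gamma_{s_i}$ only) and meets $\gamma_b$ (inside $\gamma_{t_i}$ only), but $Y_i$ is disjoint from all other curves of $a$ and $b$.

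The next step is to convert this into a planarity obstruction. Replacing each $Y_i$ by an arc $\delta_i\subseteq Y_i$ from a point $p_i\in\gamma_a$ to a point $q_i\in\gamma_b$ produces five arcs drawn in the closed upper half-plane: the grounded arcs $\gamma_a,\gamma_b$ and the three ``bridges'' $\delta_1,\delta_2,\delta_3$, pairwise disjoint except that $\delta_i$ touches $\gamma_a$ at $p_i$ and $\gamma_b$ at $q_i$. Contracting $\gamma_a$ and $\gamma_b$ would give a theta-like plane graph; the extra rigidity is that $g_a,g_b$ must stay on the $x$-axis with $g_a$ left of $g_b$, while the feet $p_i,q_i$ occur along $\gamma_a,\gamma_b$ in some order and the ground points of the bridge vertices lie at prescribed positions relative to $g_a$ and $g_b$. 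The contradiction is then extracted by a Jordan-curve argument: form the closed curve bounded by two bridges together with the sub-arcs of $\gamma_a$ and of $\gamma_b$ between their feet, and locate the third bridge, as well as $g_a$ and $g_b$, with respect to its inside and outside; the grounding constraints force two of these arcs to cross, contradicting disjointness. The hypothesis $\ell\geq 4$ enters precisely here: it guarantees that each bridge attaches to $\gamma_a$ and to $\gamma_b$ through \emph{distinct} curves $\gamma_{s_i}\neq\gamma_{t_i}$ that do not directly join $\gamma_a$ to $\gamma_b$, whereas for a short theta the configuration degenerates to $K_{2,3}$, which does have an outerstring representation.

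I expect the main obstacle to be the topological bookkeeping in this last step: carefully justifying the passage from the connected sets $Y_i$ to pairwise disjoint bridging arcs (in particular, controlling the places where $\delta_i$ meets the $x$-axis, since the interior vertices of $Q_i$ carry their own ground points), and organizing the case analysis on the cyclic order along $\gamma_a$ and $\gamma_b$ of their feet and the relative positions of $g_a,g_b$ and the bridge ground points. A cleaner but more mechanical alternative is to avoid the global picture and instead repeatedly invoke the following elementary consequence of $\prec$-constrainedness: if $u\prec v\prec w$ with $uw\in E(T)$ but $uv,vw\notin E(T)$, then $\gamma_v$ is confined to the bounded region cut off by a sub-arc of $\gamma_u$, a sub-arc of $\gamma_w$, and the $x$-axis segment from $g_u$ to $g_v$ to $g_w$; tracking how these nested regions behave along $Q_1,Q_2,Q_3$ yields the same contradiction without ever leaving a purely local analysis. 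Since outerstring graphs form a hereditary class, the statement (and its use for induced subgraphs later in the paper) follows at once.
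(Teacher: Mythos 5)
Your proof is not complete, and the incompleteness is precisely at the step the paper outsources to a citation. The paper's proof is very short: it cites Proposition 6.2 of \cite{polygon-visibility-graphs}, which establishes that for every linear order $\prec$ on the vertices of a $4$-long theta $T$ there is a $4$-subset $X=\{x_1,x_2,x_3,x_4\}$ with $x_1\prec x_2\prec x_3\prec x_4$ and $E(T[X])=\{x_1x_3,x_2x_4\}$. That single interleaved pair of edges is the entire topological obstruction: two disjoint connected curve-unions whose ground points alternate on the $x$-axis must cross. The paper then invokes Lemma~\ref{non-outerstring-criteria} with that $4$-set as $X$.

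You instead take $X=V(T)$ and try to derive the Jordan-curve contradiction from the global picture of three ``bridges'' spanning from $\gamma_a$ to $\gamma_b$. Two concrete issues. First, invoking Lemma~\ref{non-outerstring-criteria} with $X=V(T)$ buys you nothing: showing that $T$ has no $\prec$-constrained outerstring representation for every $\prec$ is literally the statement that $T$ is not outerstring, so the lemma is used vacuously; its entire point in the paper is to shrink $X$ to a small, explicitly analyzable configuration. Second, and more importantly, the contradiction you need is never actually derived. You describe the intended case analysis (``organizing the case analysis on the cyclic order along $\gamma_a$ and $\gamma_b$ of their feet and the relative positions of $g_a,g_b$ and the bridge ground points'') and acknowledge it as ``the main obstacle,'' but you do not carry it out. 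The difficulty is real: the bridges $Y_i$ are not abstract arcs joining $\gamma_a$ to $\gamma_b$ -- every intermediate vertex on $Q_i$ carries its own ground point on the $x$-axis, so the bridges themselves are anchored to the boundary in many places, and the naive Jordan-curve picture of ``one arc inside, one outside'' does not immediately apply. Controlling this is exactly the content of Proposition 6.2 in \cite{polygon-visibility-graphs}. (There is also a small slip: the subpath $Q_i$ from $s_i$ to $t_i$ is not anticomplete to $\{a,b\}$, since $s_i\sim a$ and $t_i\sim b$; what you presumably mean is that the curves of $Q_i$ meet $\gamma_a$ and $\gamma_b$ only through $\gamma_{s_i}$ and $\gamma_{t_i}$ respectively.) As written, this is a proof sketch with the central step left as an IOU, whereas the paper closes that gap by citation.
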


\begin{proof}
    Let $T$ be an $\ell$-long theta for some $\ell \geq 4$. In Proposition 6.2 of \cite{polygon-visibility-graphs}, it is shown that for every linear order $\prec$ on $V(T)$, there exists a $4$-subset $X = \{x_1,x_2,x_3,x_4\}$ of vertices such that $x_1 \prec x_2 \prec x_3 \prec x_4$ and $E(T[X]) = \{x_1x_3,x_2x_4\}$. Clearly, this means there is no $\prec_X$-constrained outerstring representation of $T[X]$. Hence, by Lemma~\ref{non-outerstring-criteria}, the graph $T$ is not an outerstring graph.
\end{proof}

In view of Lemmas \ref{sep-to-tw} and \ref{long-theta}, in order to prove Lemma \ref{outerstring-isg-bound}, it suffices to show that, for every induced subgraph $H$ of $G^g_k$ with no induced long theta and every weight function $w$, there is a small $w$-balanced separator in $H$. We do this by finding small balanced separators with respect to certain weight functions on certain induced minors of $H$, which can then be translated back into a small $w$-balanced separator in $H$.

To this end, we fix $g,k\in \poi$ and put $G = G_k^g$, $B = B(G)$, $M = M(G)$, and $S = S(G)$. Let $H$ be an induced subgraph of $G$, and let $w$ be a weight function on $H$. Let $B_H = B \cap V(H)$, $M_H = M \cap V(H)$, and $S_H = S \cap V(H)$. The graph $H$ inherits from $G$ the properties (\ref{props-1}), (\ref{props-2}), (\ref{props-3}), and (\ref{props-4}) from Lemma \ref{props}, which are restated here.

\begin{lemma}\label{props-H}
    The following hold.
    \begin{enumerate}[{\rm (i)}]
        \item \label{props-H-1} $H$ is triangle-free.
        \item \label{props-H-2} If\/ $u,v \in V(H)$ are adjacent, then\/ $\deg_H(u) \leq 3$ or\/ $\deg_H(v) \leq 3$, and\/ $\{u,v\} \nsubseteq B_H$.
        \item \label{props-H-3} If\/ $u,v \in B_H$ with\/ $u \neq v$, then\/ $\dist_{G}(u,v) \geq 2^g$.
        \item \label{props-H-4} If\/ $u \in V(H)$ and\/ $\deg_{G}(u) = 3$, then there is at most one\/ $v \in N_H(u)$ such that\/ $\deg_H(v) \geq 3$.
    \end{enumerate}
\end{lemma}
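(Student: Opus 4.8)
The plan is to derive all four items immediately from the corresponding parts of Lemma~\ref{props}, using only the elementary observation that, since $H$ is an \emph{induced} subgraph of $G_k^g$, we have $N_H(v)\subseteq N_{G_k^g}(v)$ (hence $\deg_H(v)\le\deg_{G_k^g}(v)$) for every $v\in V(H)$, together with the trivial containments $B_H\subseteq B(G_k^g)$, $M_H\subseteq M(G_k^g)$, $S_H\subseteq S(G_k^g)$. No genuinely new argument is needed; this lemma is essentially bookkeeping that fixes the notation $B_H,M_H,S_H$ for the balanced-separator analysis carried out later in the section.

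First I would dispense with (\ref{props-H-1}): triangle-freeness is hereditary, so it passes from Lemma~\ref{props}(\ref{props-1}) to every induced subgraph of $G_k^g$, in particular to $H$. For (\ref{props-H-2}), I would take $u,v\in V(H)$ adjacent in $H$; then $uv\in E(G_k^g)$, so Lemma~\ref{props}(\ref{props-2}) gives $\deg_{G_k^g}(u)\le 3$ or $\deg_{G_k^g}(v)\le 3$, and since degrees can only drop on passing to the induced subgraph $H$, the same disjunction holds with $\deg_H$ in place of $\deg_{G_k^g}$. The remaining assertion $\{u,v\}\nsubseteq B_H$ follows from $\{u,v\}\nsubseteq B(G_k^g)$ (again Lemma~\ref{props}(\ref{props-2})) together with $B_H\subseteq B(G_k^g)$.

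For (\ref{props-H-3}), since $B_H\subseteq B(G_k^g)$, any two distinct vertices of $B_H$ are distinct vertices of $B(G_k^g)$, and the required bound $\dist_{G_k^g}(u,v)\ge 2^g$ is exactly Lemma~\ref{props}(\ref{props-3}); note that the distance is measured in $G_k^g$ itself, so there is nothing to transport across the inclusion $V(H)\subseteq V(G_k^g)$. Finally, (\ref{props-H-4}) is verbatim the statement of Lemma~\ref{props}(\ref{props-4}), which quantifies over $G_k^g$ rather than over $H$, and is repeated here only for convenient later reference. The one point I would be slightly careful about is the direction of the degree comparison used in (\ref{props-H-2}) --- we need degrees to \emph{decrease} when restricting to $H$, which is precisely what $N_H(v)\subseteq N_{G_k^g}(v)$ yields --- but beyond that I do not anticipate any obstacle.
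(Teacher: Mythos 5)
Your proposal is correct and matches the paper's (implicit) argument: the paper simply states that $H$ inherits these properties from $G_k^g$ as an induced subgraph, which is exactly the bookkeeping you spell out. The only small discrepancy is that the paper's preamble sentence cites Lemma~\ref{props}(\ref{props-5}) (girth) rather than (\ref{props-1}) (triangle-freeness) as the source of item (\ref{props-H-1}); your reading, deriving it directly from (\ref{props-1}), is the cleaner one and works for all $g$.
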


For $b \in B$, let $N_H^M(b) = N_H(b) \cap M$ and $N_H^M[b] = N_H^M(b) \cup \{b\}$. Put $B_{H'} = \{N_H^M[b]\colon b \in B_H\}$, $M_{H'} = \{\{m\}\colon m \in M_H,\: m \notin \bigcup_{X \in B_{H'}}X\}$, and $S_{H'} = \{\{s\}\colon s \in S_H\}$. Let $H'$ be the model in $H$ induced by $B_{H'} \cup M_{H'} \cup S_{H'}$, and define $w'\colon V(H') \to [0,1]$ by $w'(S) = \sum_{v \in S} w(v)$ for $S \in V(H')$. It is easy to see that $w'$ is a weight function on $H'$. We note also that every element of $B_{H'}$ is uniquely identified by a vertex of $B_H$, and every element of $M_{H'}$ or $S_{H'}$ is uniquely identified by a vertex of $M_H$ or $S_H$, respectively.

Note that $H'$ here is defined analogously to Section \ref{no-large-wall}, and so by Lemma~\ref{H'-SP}, we have $\tw(H') \leq 2$. It follows from Lemma \ref{tw-to-sep} that there is a $w'$-balanced separator $K' \subseteq V(H')$ for $H'$ of size at most $3$. It is straightforward to see that taking $K = \bigcup_{X \in K'} X$ gives a $w$-balanced separator in $H$. However, there is no bound on the size of the sets $X$, as each big vertex of $G$ can have up to $k$ medium neighbors, and our desired bound needs to be independent of $k$.

To remove this dependence on $k$, we now define a new induced minor of $H$ related to $H'$ and $K'$. First, we partition $K'$ by defining the following sets:
\begin{align*}
    Y'_{B,>3} &= \{ b \in B_H\colon N_H^M[b] \in K',\: \deg_H(b) > 3 \}, \\
    Y'_{B,\leq 3} &= \{ b \in B_H\colon N_H^M[b] \in K',\: \deg_H(b) \leq 3 \}, \\
    K'_{B,>3} &= \{N_H^M[b]\colon b \in Y'_{B,>3}\}, \\
    K'_{B,\leq 3} &= \{N_H^M[b]\colon b \in Y'_{B,\leq 3}\}, \\
    K'_M &= K' \cap M_{H'}, \\
    K'_S &= K' \cap S_{H'}.
\end{align*}
Note that $K' = K'_{B,>3} \cup K'_{B,\leq 3} \cup K'_M \cup K'_S$ and all of these subsets are pairwise disjoint. The set $K'_{B,>3}$ comprises ``troublesome'' vertices of $K'$ in the sense that they are the vertices preventing $\bigcup_{S \in K'} S$ from having bounded size.

Let $\mathcal{N} = \bigcup_{b \in Y'_{B,>3}} \{\{n\}\colon n \in N_H^M(b)\}$ and $\mathcal{D} = \{\bigcup_{X \in V(D)} X\colon D$ is a component of $H' \setminus K' \}$. Let $H''$ be the model in $H$ induced by $\mathcal{N} \cup \mathcal{D}$; it is straightforward to see that $H''$ is a bipartite graph with bipartition $(\mathcal{N},\mathcal{D})$. Define $w''\colon V(H'') \to [0,1]$ by $w''(S) = \sum_{v \in S} w(v)$ for $S \in V(H'')$. Since $\bigcup_{S \in V(H'')} S \subseteq V(H)$, $w''$ is a weak weight function on $H''$. To bound the treewidth of $H''$, we need the following lemma.

\begin{lemma}[\cite{wallpaper, submodular-functions}] \label{three-vertices}
    Let\/ $T$ be a graph that does not contain\/ $K_3$ as a subgraph. Let\/ $x_1,x_2,x_3$ be distinct vertices of\/ $T$, and assume that\/ $F$ is a connected induced subgraph of\/ $T \setminus \{x_1,x_2,x_3\}$ such that\/ $V(F)$ contains at least one neighbor of each of\/ $x_1,x_2,x_3$, and that\/ $V(F)$ is minimal subject to inclusion. Then, one of the following holds.

    \begin{enumerate}[{\rm (i)}]
        \item \label{three-vertices-1} For some distinct\/ $i,j,k \in \{1,2,3\}$, there exists\/ $P$ that is either a path from\/ $x_i$ to\/ $x_j$ or a hole containing the edge\/ $x_ix_j$ such that
        \begin{itemize}
            \item $V(F) = V(P) \setminus \{x_i,x_j\}$, and
            \item $x_k$ has at least two non-adjacent neighbors in\/ $F$.
        \end{itemize}
        \item \label{three-vertices-2} There is a vertex\/ $a \in V(F)$ and three paths\/ $P_1,P_2,P_3$, where\/ $P_i$ is a path from\/ $a$ to\/ $x_i$, such that
        \begin{itemize}
            \item $V(F) = (V(P_1) \cup V(P_2) \cup V(P_3)) \setminus \{x_1,x_2,x_3\}$, and
            \item the sets\/ $V(P_1) \setminus \{a\}$, $V(P_2) \setminus \{a\}$, and\/ $V(P_3) \setminus \{a\}$ are pairwise disjoint, and
            \item for distinct\/ $i,j \in \{1,2,3\}$, there are no edges between\/ $V(P_i)$ and\/ $V(P_j)$ except possibly\/ $x_ix_j$.
        \end{itemize}
    \end{enumerate}

\end{lemma}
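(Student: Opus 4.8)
My plan is to extract as much structure as possible from the minimality of $F$: first show it forces $F$ to be a tree with at most three leaves, and then read off the two outcomes from how $x_1,x_2,x_3$ attach to that tree. The key observation is the following consequence of minimality: since $F=T[V(F)]$ is chosen with $V(F)$ minimal among connected induced subgraphs of $T\setminus\{x_1,x_2,x_3\}$ whose vertex set meets $N(x_i)$ for each $i$, deleting any single vertex $v$ must destroy one of these properties, so every $v\in V(F)$ is either a cut vertex of $F$ or the unique vertex of $F$ adjacent to some $x_i$; in particular \emph{at most three} vertices of $F$ are not cut vertices of $F$. I would then argue that this forces $F$ to be a tree. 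If not, $F$ has a block $\beta$ that is $2$-connected, and since $T$ (hence $F$) is triangle-free, $|V(\beta)|\ge4$. Letting $j$ be the number of vertices of $\beta$ that are cut vertices of $F$, each of the remaining $|V(\beta)|-j$ vertices of $\beta$ is a non-cut vertex of $F$ (deleting it leaves $\beta$, hence $F$, connected), and for each of the $j$ cut vertices $v$ the pendant part of $F$ attached at $v$ contains at least one further non-cut vertex of $F$, with distinct such vertices for distinct $v$. Thus $F$ has at least $\max\{|V(\beta)|-j,0\}+j\ge|V(\beta)|\ge4$ non-cut vertices, a contradiction. So $F$ is a tree; in a tree on at least two vertices the non-cut vertices are exactly the leaves, so $F$ has at most three leaves, each of which (not being a cut vertex) is the unique vertex of $F$ adjacent to some $x_i$, and distinct leaves are adjacent to distinct $x_i$.

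Next I would dispose of the easy shapes. If $|V(F)|=1$, say $V(F)=\{a\}$, then $a\sim x_i$ for all $i$ and outcome (ii) holds with centre $a$ and $P_i=a\dd x_i$. If $F$ has exactly three leaves, then (a tree with exactly three leaves) $F$ is a spider with a unique branch vertex $a$ of degree $3$ and three legs $L_1,L_2,L_3$ from $a$ to leaves $\lambda_1,\lambda_2,\lambda_3$, with $\lambda_i\sim x_i$ after relabelling. Using the dichotomy once more, each $x_i$ has $\lambda_i$ as its \emph{only} neighbour in $F$: otherwise $\lambda_i$, a leaf, would be the unique $F$-neighbour of some $x_{i'}$ with $i'\ne i$, forcing $\lambda_{i'}=\lambda_i$. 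Then $P_i=L_i\cup\{x_i\}$ witnesses outcome (ii), the condition that there are no edges between $V(P_i)$ and $V(P_j)$ except possibly $x_ix_j$ holding because $F$ is a tree and each $x_i$ touches $F$ only at $\lambda_i$.

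The remaining case is $F$ a path $p_0\dd p_1\dd\cdots\dd p_n$ with $n\ge1$. Its two endpoints are the unique $F$-neighbours of two distinct terminals, say after relabelling $x_1\sim p_0$ and $x_2\sim p_n$ with $N(x_1)\cap V(F)=\{p_0\}$ and $N(x_2)\cap V(F)=\{p_n\}$ (there is a little bookkeeping: one checks via the dichotomy that $p_n$ cannot be pinned to $x_1$, and one allows the attachment point of $x_3$ to be an endpoint). Since $F$ is triangle-free, $x_3$ has no two consecutive $p_i,p_{i+1}$ among its neighbours, so $N(x_3)\cap V(F)$ is an independent set of $F$. If $x_3$ has at least two neighbours $p_a,p_b$ in $F$, they are non-adjacent and $x_1\dd p_0\dd\cdots\dd p_n\dd x_2$ is an induced path $P$ (or, when $x_1\sim x_2$, its closure is a hole containing $x_1x_2$) with $V(F)=V(P)\setminus\{x_1,x_2\}$, so outcome (i) holds with $k=3$. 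If instead $x_3$ has exactly one neighbour $p_m$ in $F$, then outcome (ii) holds with centre $a=p_m$ and legs $p_m\dd\cdots\dd p_0\dd x_1$, $p_m\dd\cdots\dd p_n\dd x_2$, and $p_m\dd x_3$.

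I expect the main obstacle to be the step that $F$ is a tree: turning ``at most three non-cut vertices'' into a genuine structural statement requires the block-decomposition count above, including the elementary but fiddly facts that a $2$-connected triangle-free graph has at least four vertices and that each cut vertex inside the block contributes a fresh non-cut vertex in its pendant part, together with care over the degenerate cases $|V(F)|\le2$. Once that is in hand, the path case is routine but needs patience with the sub-subcases coming from which terminal is pinned to which endpoint and from coincidences among the attachment points.
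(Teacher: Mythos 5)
The paper states Lemma~\ref{three-vertices} with a citation to \cite{wallpaper, submodular-functions} and does not prove it, so there is no in-text argument to compare against. Your proof, taken on its own terms, is correct and follows the standard route for this kind of "connected minimal attachment" lemma: minimality of $V(F)$ gives the dichotomy that every vertex of $F$ is either a cut vertex of $F$ or the unique $F$-neighbour of some $x_i$; counting non-cut vertices via the block structure (together with the fact that a $2$-connected triangle-free block has at least $4$ vertices, and that each cut vertex of $F$ inside a block sits on a pendant branch supplying a further non-cut vertex from a leaf block) then forces $F$ to be a tree with at most three leaves; finally, the single-vertex, spider, and path cases give outcome (ii), outcome (ii), and (depending on whether $x_3$ has one or several $F$-neighbours) outcome (ii) or (i) respectively, with triangle-freeness supplying the non-adjacency of the two neighbours in case (i) and ruling out the hole degenerating to a triangle. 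The one place a reader might stumble is the "no edges between $V(P_i)$ and $V(P_j)$" clause when $a$ has neighbours on both paths; the intended reading (and the one your construction delivers, and the one used in the paper's application to build a theta) is that $V(P_i)\setminus\{a\}$ and $V(P_j)\setminus\{a\}$ are anticomplete apart from the possible edge $x_ix_j$, which your casework does establish, since each $x_i$ attaches to $F$ at a single vertex and $F$ is an induced tree. One small stylistic remark: after you fix the relabelling so that $\lambda_i$ is the \emph{unique} $F$-neighbour of $x_i$ (which the injectivity of the "pinning'' map already gives you), the subsequent re-derivation of that uniqueness is redundant.
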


\begin{lemma} \label{H''-bounded-degree}
    If\/ $H''$ has maximum degree at least\/ $9$, then\/ $H$ contains an\/ $\ell$-long theta as an induced subgraph for some\/ $\ell \geq 2^g - 1$. In particular, if\/ $H$ is an outerstring graph, then\/ $H''$ has maximum degree less than\/ $9$.
\end{lemma}

\begin{proof}
    Suppose that $H''$ has maximum degree at least $9$. For $\{n\} \in \mathcal{N}$, we have $n \in M_H$ by construction, so $\deg_H(n) \leq 3$ and thus $\deg_{H''}(\{n\}) \leq 3$. This means that there is some $D \in \mathcal{D}$ with $\deg_{H''}(D) \geq 9$. Since $|K'_{B,>3}| \leq |K'| = 3$, there is $b \in Y'_{B,>3}$ such that there are at least three edges of $H''$ with one end $D$ and the other end in $\{\{n\}\colon n \in N_H^M(b)\}$. It follows that there exist distinct $n_1,n_2,n_3 \in N_H^M(b)$ such that $N_H(n_i) \cap D \neq \varnothing$ for every $i \in \{1,2,3\}$.

    Let $X \subseteq D$ be minimal (with respect to inclusion) such that $H[X]$ is connected and contains at least one neighbor of each of $n_1,n_2,n_3$. We now apply Lemma \ref{three-vertices} with $n_1,n_2,n_3$ and $H[X]$.

    Suppose that case (\ref{three-vertices-1}) applies; let $\{i,j,k\} = \{1,2,3\}$ and $P$ be such that $P$ is a path in $H$ from $n_i$ to $n_j$, $n_k$ has at least two non-adjacent neighbors in $P$, and $V(H[X]) = V(P) \setminus \{n_i, n_j\}$. Note that $P$ is not a hole, since $n_i$ and $n_j$ are both adjacent to $b$, thus they are not adjacent to each other. Since $n_k$ is adjacent to $b$ in $H$, $n_k$ is a medium vertex, thus $\deg_G(n_k) = 3$. Furthermore, every neighbor of $n_k$ in $P$ (of which there are at least $2$) has degree at least $3$ in $H$. This contradicts Lemma \ref{props-H} (\ref{props-H-4}).

    Suppose instead that case (\ref{three-vertices-2}) applies; let $a \in X$ and $P_1,P_2,P_3$ be such that $P_i$ is a path from $a$ to $n_i$ for each $i \in \{1,2,3\}$, and $X = (V(P_1) \cup V(P_2) \cup V(P_3)) \setminus \{n_1,n_2,n_3\}$, and the sets $V(P_1) \setminus \{a\}$, $V(P_2) \setminus \{a\}$, and $V(P_3) \setminus \{a\}$ are pairwise anticomplete. Note that $\dist_H(a, b) \geq 2^g-1$; this follows from Lemma \ref{props-H} \eqref{props-H-3} as $b \in B(G)$ and $a \in V(D) \subseteq G \setminus N[b]$ and $a$ has degree at least 3 in $G$. So either $a \in B(G)$ and \eqref{props-H-3} applies, or $a$ is adjacent to a vertex $a'$ in $B(G) \setminus \{b\}$ and the statement follows from \eqref{props-H-3} applied to $a'$ and $b$.

    We now have that $H[V(P_1) \cup V(P_2) \cup V(P_3) \cup \{a,b\}]$ is an induced subgraph of $H$ that is a theta of length at least $2^g-1$ (see Figure \ref{fig:theta-in-bipartite-graph}), completing the proof. Note that if $H$ is an outerstring graph, then such an induced subgraph contradicts Lemma \ref{long-theta}, implying that $H''$ has maximum degree less than $9$.
\end{proof}

\begin{figure}
    \centering
    \includegraphics[width=0.4\linewidth]{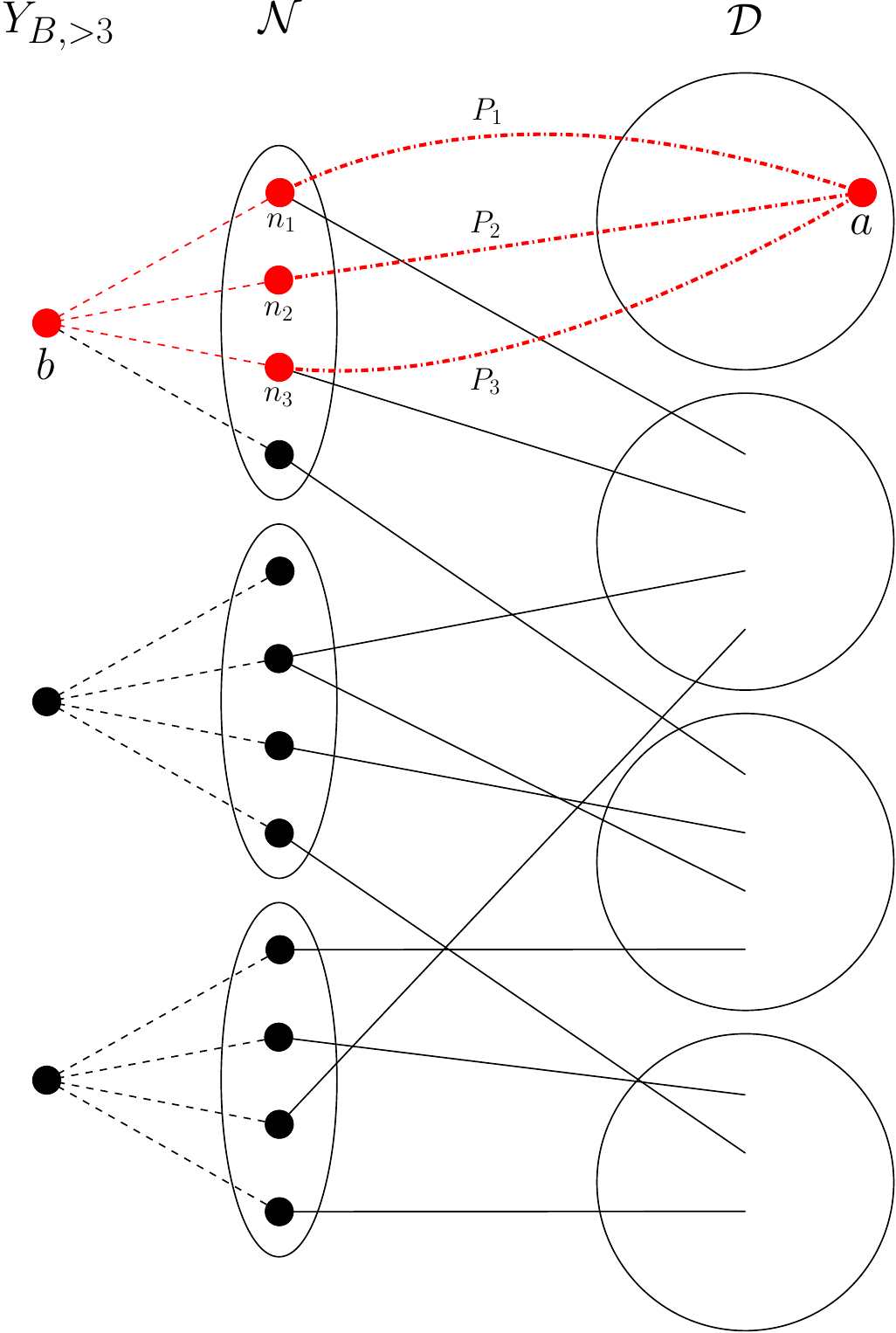}
    \caption{An illustration of the theta that appears in $H$ when $H''$ has maximum degree at least $9$.}
    \label{fig:theta-in-bipartite-graph}
\end{figure}

From here onwards, we assume that $H$ is an outerstring graph, so that we may proceed to bound the treewidth of $H''$ via Lemma \ref{H''-bounded-degree}.

\begin{corollary} \label{H''-tw}
    There exists\/ $L''\in \poi$ (independent of\/ $k$ and\/ $H$) such that\/ $\tw(H'') \leq L''$.
\end{corollary}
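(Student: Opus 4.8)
The idea is to combine the structural facts we have already established about $H''$ with the two balanced-separator lemmas (Lemmas~\ref{sep-to-tw} and~\ref{tw-to-sep}) and with Korhonen's theorem (Theorem~\ref{bound-no-wall-isg}). By Lemma~\ref{H''-bounded-degree}, since $H$ is an outerstring graph, $H''$ has maximum degree at most $8$. So $H''$ is a bipartite, triangle-free graph of bounded maximum degree; by Theorem~\ref{bound-no-wall-isg} (applied with $d=8$ and a fixed value of $t$ chosen below), it will follow that $\tw(H'')\le L''$ for an absolute constant $L''$ once we check that $H''$ contains no induced subdivision of a large wall and no induced line graph of a large wall. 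The line-graph case is immediate: line graphs of subdivided walls of size $\ge 3$ contain triangles (or at least $K_{1,3}$-free obstructions), and in any case $H''$ is triangle-free because $H$ is, so $H''$ contains no line graph of a subdivision of $W_{3\times 3}$. For the wall case, the key point is that $H''$ is an induced minor of $H$ obtained by contracting connected subsets of $V(H)$, so a subdivision of a large wall in $H''$ pulls back to a wall-like structure in $H$; but we do not even need this, because a subdivision of a wall of unbounded size in $H''$ would force $\tw(H'')$ unbounded, and we can instead argue directly.

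Concretely, here is the cleaner route. First I would fix a constant, say $t_0 = 5$ (or whatever value makes the subsequent wall argument go through), and observe that it suffices to bound the size of a $w''$-balanced separator in $H''$ independently of $k$ and $H$; Lemma~\ref{sep-to-tw} then yields the treewidth bound. To produce such a separator, recall that $H''$ has maximum degree at most $8$ and is triangle-free, hence in particular $K_{1,9}$-free and with no large clique; since it also excludes large walls and large line graphs of walls as induced subgraphs (triangle-freeness kills the line graphs, and an induced subdivision of a large wall in $H''$ together with the degree bound would already contradict Theorem~\ref{bound-no-wall-isg} unless we first argue it cannot occur), Theorem~\ref{bound-no-wall-isg} applies \emph{provided} we first rule out large induced subdivided walls in $H''$. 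That last point is where the work is. The natural way to rule them out is to transfer back to $H$: a subdivision of $W_{t\times t}$ in $H''$, being an induced subgraph of an induced minor of $H$ (with each branch set of $H''$ connected in $H$), gives rise to a model of $W_{t\times t}$ in $H$ in which all branch sets of branch vertices are pairwise far apart (using Lemma~\ref{props-H'}\eqref{props-H'-2} and the bipartite structure of $H''$), and then one contracts to obtain a subdivision of a smaller wall inside $H$ itself — contradicting Lemma~\ref{no9wall} once $t$ is large enough relative to $5$. This is exactly the argument already used in the proof of Lemma~\ref{no9wall}: branch vertices that are pairwise far apart survive the contraction into $H'$, and the same reasoning shows a large induced subdivided wall in $H''$ would survive into $H'$ as a subdivision of $W_{3\times3}$, contradicting $\tw(H')\le 2$.

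So the steps, in order, are: (1) note $H''$ is triangle-free and, by Lemma~\ref{H''-bounded-degree}\eqref{H''-bound} together with the standing assumption that $H$ is outerstring, has maximum degree at most $8$; (2) observe $H''$ contains no induced line graph of a subdivision of $W_{3\times3}$ (triangle-freeness); (3) show $H''$ contains no induced subdivision of $W_{t_0\times t_0}$ for a suitable absolute $t_0$, by choosing a sub-subdivision with branch vertices pairwise at distance $\ge 3$ in $H''$, noting that contracting $H''$ into (a further induced minor corresponding to) $H'$ keeps distinct branch vertices distinct — since each vertex of $H''$ lies inside a connected subset of $H$ and each branch set of $H'$ has small diameter while branch vertices are far apart — and deducing a subdivision of $W_{3\times3}$ in $H'$, contradicting $\tw(H')\le2$ from Lemma~\ref{H'-SP}; (4) apply Theorem~\ref{bound-no-wall-isg} with $d=8$ and $t=t_0$ to conclude $\tw(H'')\le L''$ for an absolute constant $L''$ independent of $g,k,H$. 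The main obstacle is step~(3): making precise the pull-back from an induced subdivided wall in $H''$ to one in $H'$, i.e.\ verifying that the contraction map does not merge distinct branch vertices. The cleanest way to handle it is to re-run the distance bookkeeping of Lemma~\ref{no9wall}: branch vertices of a well-spread subdivision of a wall are pairwise far in $H''$, hence (since $H''$ is an induced minor of $H$ via connected branch sets and, by Lemma~\ref{props-H'}, big branch sets have small diameter and are far apart) they remain far, and in particular distinct, after contracting down to $H'$; everything else is a direct invocation of the quoted theorems.
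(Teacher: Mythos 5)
Your overall plan — invoke Lemma~\ref{H''-bounded-degree} for the degree bound and then Theorem~\ref{bound-no-wall-isg} — is the same as the paper's, but you make the middle step (excluding subdivided walls as induced subgraphs of $H''$) far harder than it needs to be, and your step~(3) has a gap. The paper's observation is simply that $H''$ is an induced minor of $H$, hence of $G_k^g$, and ``is an induced minor of'' is transitive; so $H''$ inherits from Lemma~\ref{no-wall-im} that it has no $W_{h_0\times h_0}$ induced minor, and in particular no induced subdivision of a large wall and no induced line graph of a large subdivided wall. That is the whole argument. You actually write the key phrase — ``a subdivision of $W_{t\times t}$ in $H''$, being an induced subgraph of an induced minor of $H$'' — but then, instead of immediately concluding $W_{t\times t}$ is an induced minor of $G$ and contradicting Lemma~\ref{no-wall-im}, you try to upgrade this to an induced \emph{subgraph} isomorphic to a subdivided wall inside $H$ (or $H'$) so you can invoke Lemma~\ref{no9wall}. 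That is an unnecessarily strong target: Lemma~\ref{no-wall-im} already lives at the induced-minor level, which is exactly what you have.

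The attempted pull-back to $H'$ is where the argument actually breaks. $H''$ is \emph{not} an induced minor of $H'$: the construction of $H''$ splits up the $H'$-branch-sets $N_H^M[b]$ for $b \in Y'_{B,>3}$ into singletons $\{n\} \in \mathcal{N}$ while deleting the vertex $b$ and the other branch sets in $K'$, so there is no contraction map from $H''$ down to $H'$ (nor from $H'$ to $H''$). Consequently the distance-bookkeeping argument from Lemma~\ref{no9wall} does not transfer in the way you describe, and ``contracting $H''$ into (a further induced minor corresponding to) $H'$'' is not a well-defined operation. You flag this yourself as ``the main obstacle,'' but it is not an obstacle one needs to overcome — it is a detour that disappears once you use induced-minor transitivity from $H''$ directly to $G$. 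Your triangle-freeness (via bipartiteness of $H''$) argument for excluding line graphs of subdivided walls is fine and is a legitimate, arguably cleaner, alternative to the paper's phrasing of that sub-case; but the subdivided-wall sub-case as you have written it is not complete.
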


\begin{proof}
    Let $h_0$ be as in Lemma \ref{no-wall-im}. Since $H''$ is an induced minor of $G$ and $G$ does not contain $W_{h_0 \times h_0}$ as an induced minor, it follows that $H''$ does not contain $W_{h_0 \times h_0}$ as an induced minor, and so in particular $H''$ does not contain any subdivision of $W_{h_0 \times h_0}$ or $L(W_{h_0 \times h_0})$ as an induced subgraph. By Lemma \ref{H''-bounded-degree}, $H''$ has maximum degree less than $9$. The result now follows from Theorem \ref{bound-no-wall-isg}.
\end{proof}

By Lemma \ref{tw-to-sep}, $H''$ has a $w''$-balanced separator $K''$ of size at most $L'' + 1$. Let $K''_\mathcal{N} = K'' \cap \mathcal{N}$ and $K''_\mathcal{D} = K'' \cap \mathcal{D}$.

\begin{lemma} \label{H''-balanced-sep} Let\/ $L''$ be as in Corollary \ref{H''-tw}. Then
    \begin{enumerate}[{\rm (i)}]
        \item \label{H''-balanced-sep-1} $K''_\mathcal{N} \cup N_{H''}(K''_\mathcal{D})$ is a\/ $w''$-balanced separator in\/ $H''$, and
        \item \label{H''-balanced-sep-2} $|K''_\mathcal{N} \cup N_{H''}(K''_\mathcal{D})| \leq 9(L''+1)$.
    \end{enumerate}
\end{lemma}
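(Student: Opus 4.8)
The plan is to verify the two parts separately, with part (\ref{H''-balanced-sep-2}) being essentially a counting exercise and part (\ref{H''-balanced-sep-1}) the conceptual step. For part (\ref{H''-balanced-sep-2}), recall that $K''$ has size at most $L''+1$ by Corollary \ref{H''-tw}, so $|K''_{\mathcal{N}}| \le L''+1$ and $|K''_{\mathcal{D}}| \le L''+1$. By Lemma \ref{H''-bounded-degree} (\ref{H''-bound}), which applies since we have assumed $H$ is an outerstring graph, every vertex of $H''$ has degree less than $9$; hence $|N_{H''}(K''_{\mathcal{D}})| \le 8\,|K''_{\mathcal{D}}| \le 8(L''+1)$. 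Adding these bounds gives $|K''_{\mathcal{N}} \cup N_{H''}(K''_{\mathcal{D}})| \le (L''+1) + 8(L''+1) = 9(L''+1)$, as required.

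For part (\ref{H''-balanced-sep-1}), write $K = K''_{\mathcal{N}} \cup N_{H''}(K''_{\mathcal{D}})$; I must show $w''(D') \le \tfrac{1}{2}$ for every component $D'$ of $H'' \setminus K$. The key observation is that $K$ ``dominates'' $K''$ in the following sense: every vertex of $K''$ that is not already in $K$ — that is, every vertex of $K''_{\mathcal{D}}$ — has all of its $H''$-neighbors in $N_{H''}(K''_{\mathcal{D}}) \subseteq K$. Consequently, in $H'' \setminus K$, each such vertex of $K''_{\mathcal{D}}$ becomes an isolated vertex (it has lost all its neighbors), while every other vertex of $H'' \setminus K$ lies in $V(H'') \setminus K''$. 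Therefore each component $D'$ of $H'' \setminus K$ either is a single vertex of $K''_{\mathcal{D}}$, or is entirely contained in $H'' \setminus K''$ and hence — since removing the extra vertices of $K \setminus K''$ only breaks components into smaller pieces — is contained in a single component $D$ of $H'' \setminus K''$.

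In the first case, $D' = \{v\}$ for some $v \in K''_{\mathcal{D}} \subseteq \mathcal{D}$, and since $w''$ is a weak weight function on $H''$ with $\bigcup_{S \in V(H'')} S \subseteq V(H)$ and the sets indexing $V(H'')$ are pairwise disjoint, we have $w''(v) \le w''(H'') \le 1$; but more is true — $v$ is a component of $H'' \setminus K''$ as well (it is isolated in $H'' \setminus K$, hence also in the larger graph $H'' \setminus K''$ once we note $v \in K''$, so strictly we should argue via the next case). In fact it is cleaner to note that in either case $D'$ is contained in some component $D$ of $H'' \setminus K''$: when $D' = \{v\}$ with $v \in K''$ this needs a small adjustment, so I would instead phrase the dichotomy as ``$D'$ is disjoint from $K''$'' (handled by containment in a component of $H'' \setminus K''$) versus ``$D'$ meets $K''$'', and in the latter case observe $D'$ must be a single isolated vertex of $K''_{\mathcal{D}}$, whose weight is at most that of any component of $H'' \setminus K''$ containing one of its (former) neighbors — but this requires care. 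The honest route: for $D'$ disjoint from $K''$, containment in a component $D$ of $H'' \setminus K''$ gives $w''(D') \le w''(D) \le \tfrac{1}{2}$ since $K''$ is a $w''$-balanced separator. For $D'$ a single vertex $v \in K''_{\mathcal{D}}$, I would instead bound $w''(v)$ directly: $v = D$ for some component $D$ of $H' \setminus K'$, and the weight of any single element of $\mathcal{D}$ is at most $w'(D) \le \tfrac{1}{2}$ because $K'$ was chosen to be a $w'$-balanced separator in $H'$ and $D$ (as a component of $H' \setminus K'$) satisfies $w'(D) \le \tfrac{1}{2}$; translating back through the identification of weights, $w''(v) = w(\,\bigcup_{X \in V(D)} X\,) = w'(D) \le \tfrac{1}{2}$. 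Either way $w''(D') \le \tfrac{1}{2}$, proving (\ref{H''-balanced-sep-1}).

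The main obstacle I anticipate is the bookkeeping in the first case above: one must be careful that an isolated vertex $v$ of $K''_{\mathcal{D}}$ in $H'' \setminus K$ genuinely arises as a whole component $D$ of $H' \setminus K'$, so that its $w''$-weight inherits the $\tfrac{1}{2}$ bound from the fact that $K'$ is a $w'$-balanced separator in $H'$; this is where the precise definitions of $\mathcal{D}$ and of the weight functions $w', w''$ must be unwound exactly. Everything else is routine.
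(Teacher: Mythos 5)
Your proposal is correct and takes essentially the same approach as the paper: part~(\ref{H''-balanced-sep-2}) via the degree bound of Lemma~\ref{H''-bounded-degree}~(\ref{H''-bound}), and part~(\ref{H''-balanced-sep-1}) via the dichotomy of whether a component of $H''\setminus(K''_\mathcal{N}\cup N_{H''}(K''_\mathcal{D}))$ meets $K''_\mathcal{D}$ (in which case it is a single isolated vertex whose $w''$-weight is bounded by $\tfrac12$ because the corresponding set is a component of $H'\setminus K'$ and $K'$ is a $w'$-balanced separator) or is disjoint from $K''$ (in which case it lies inside a component of $H''\setminus K''$, bounded because $K''$ is a $w''$-balanced separator). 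The mid-proof detours you explicitly flag and abandon were indeed dead ends, but the ``honest route'' you settle on is exactly the paper's argument.
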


\begin{proof}
    Let $C$ be a component of $H'' \setminus (K''_\mathcal{N} \cup N_{H''}(K''_\mathcal{D}))$. First suppose that $V(C) \cap K''_\mathcal{D} \neq \varnothing$, and let $D \in V(C) \cap K''_\mathcal{D}$. Since $N_{H''}(D) \subseteq K''_\mathcal{N} \cup N_{H''}(K''_\mathcal{D})$, we have $\deg_{H'' \setminus (K''_\mathcal{N} \cup N_{H''}(K''_\mathcal{D}))}(D) = 0$, and so $V(C) = \{D\}$. By the definitions of $\mathcal{D}$ and $w''$, it follows that $w''(D) \leq \frac{1}{2}$. Now suppose that $V(C) \cap K''_\mathcal{D} = \varnothing$. Then $C$ is a connected induced subgraph of $H'' \setminus K''$, so in particular, there exists a component $C^*$ of $H'' \setminus K''$ such that $V(C) \subseteq V(C^*)$. It follows that $w''(C) \leq w''(C^*) \leq \frac{1}{2}$. This proves (\ref{H''-balanced-sep-1}).

    To see that (\ref{H''-balanced-sep-2}) holds, we observe that $K''_\mathcal{N} \cup N_{H''}(K''_\mathcal{D})$ is obtained from $K''$ by removing a subset of its elements and replacing each removed element by at most nine new elements. Since $|K''| \leq L'' + 1$, the bound follows.
\end{proof}

We are ready to translate the balanced separators for $H'$ and $H''$ back into a $w$-balanced separator in $H$.

\begin{lemma} \label{H-balanced-sep}
    Let\/ $K^* = K'_{S} \cup K'_{M} \cup K'_{B,\leq 3} \cup K''_\mathcal{N} \cup N_{H''}(K''_\mathcal{D})$, and let\/ $K = (\bigcup_{X \in K^*} X) \cup Y'_{B,> 3}$. Then the following hold.
    \begin{enumerate}[{\rm (i)}]
        \item \label{H-balanced-sep-1} $K$ is a\/ $w$-balanced separator in\/ $H$.
        \item \label{H-balanced-sep-2} $|K| \leq 21 + 9(L'' + 1)$.
    \end{enumerate}
\end{lemma}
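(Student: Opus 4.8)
The plan is to transfer the two balanced separators already constructed (for $H'$ and for $H''$) back to $H$ by projecting along the model maps. Concretely, I would exhibit a map $q$ from $V(H)\setminus K$ to $V(H'')\setminus(K''_\mathcal{N}\cup N_{H''}(K''_\mathcal{D}))$ under which every component of $H\setminus K$ is carried into a single component of $H''\setminus(K''_\mathcal{N}\cup N_{H''}(K''_\mathcal{D}))$ and total weight does not increase. Since $K''_\mathcal{N}\cup N_{H''}(K''_\mathcal{D})$ is a $w''$-balanced separator in $H''$ by Lemma~\ref{H''-balanced-sep}(\ref{H''-balanced-sep-1}), part (\ref{H-balanced-sep-1}) then follows, and part (\ref{H-balanced-sep-2}) is a short count.

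First I would record the relevant partition of $V(H)$. Since $H'$ is a model in $H$ whose branch sets partition $V(H)$, and $V(H')$ is the disjoint union of $K'$ with the vertex sets of the components of $H'\setminus K'$, the set $V(H)$ is the disjoint union of $\bigcup_{D\in\mathcal{D}}D$ with the branch sets in $K'$, viewed as subsets of $V(H)$. Writing $K'=K'_{B,>3}\sqcup K'_{B,\leq 3}\sqcup K'_M\sqcup K'_S$ and using $N_H^M[b]=\{b\}\cup N_H^M(b)$, the branch sets in $K'_{B,>3}$ flatten to $Y'_{B,>3}\sqcup\bigl(\bigcup_{\{n\}\in\mathcal{N}}\{n\}\bigr)$, while the branch sets in $K'_{B,\leq 3}$, $K'_M$, $K'_S$ flatten to the parts of $\bigcup_{X\in K^*}X$ other than the one coming from $H''$. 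Since $H''$ is bipartite with parts $\mathcal{N}$ and $\mathcal{D}$, every element of $K''_\mathcal{N}\cup N_{H''}(K''_\mathcal{D})$ is a singleton lying in $\mathcal{N}$, so comparing with the definition of $K$ gives
\[ V(H)\setminus K \;=\; \Bigl(\bigcup_{D\in\mathcal{D}}D\Bigr)\ \sqcup\ \Bigl(\bigcup_{\{n\}\in\mathcal{N},\ \{n\}\notin K''_\mathcal{N}\cup N_{H''}(K''_\mathcal{D})}\{n\}\Bigr). \]
I then define $q$ by sending each vertex of $V(H)\setminus K$ lying in a flattened component $D\in\mathcal{D}$ to $D$, and each $v$ with $\{v\}\in\mathcal{N}$ (and $\{v\}$ outside the $H''$-separator) to $\{v\}$; by the displayed identity this is a well-defined map into $V(H'')\setminus(K''_\mathcal{N}\cup N_{H''}(K''_\mathcal{D}))$.

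The key step is to check that $q$ does not split edges, i.e.\ that $uv\in E(H)$ with $u,v\notin K$ implies $q(u)=q(v)$ or $q(u)q(v)\in E(H'')$. If $u$ and $v$ lie in distinct flattened components $D_0,D_0'$, then — since $H'$ is an induced model and the branch sets containing $u$ and $v$ both avoid $K'$ — the edge $uv$ forces those two branch sets to be adjacent in $H'$, hence to lie in the same component of $H'\setminus K'$, a contradiction. If $\{u\},\{v\}\in\mathcal{N}$, then $uv$ forces $\{u\}\{v\}\in E(H'')$, contradicting that $\mathcal{N}$ is one side of the bipartition of $H''$. If $u$ lies in a flattened component $D$ and $\{v\}\in\mathcal{N}$, then by definition of the induced model $H''$ the vertices $D$ and $\{v\}$ are adjacent in $H''$. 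Hence $q$ carries each component $C$ of $H\setminus K$ into a connected subgraph, and so into a single component $C''$, of $H''\setminus(K''_\mathcal{N}\cup N_{H''}(K''_\mathcal{D}))$. Since $w\geq 0$ and the $q$-preimage of any vertex $U$ of $H''$ is contained in $U$ as a subset of $V(H)$, we obtain $w(C)\leq\sum_{U\in q(C)}w''(U)=w''(q(C))\leq w''(C'')\leq\tfrac12$ by Lemma~\ref{H''-balanced-sep}(\ref{H''-balanced-sep-1}), which proves (\ref{H-balanced-sep-1}).

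For (\ref{H-balanced-sep-2}), recall $|K'|\leq 3$, so each of $K'_S$, $K'_M$, $K'_{B,\leq 3}$ and $Y'_{B,>3}$ has at most three elements: the elements of $K'_S$ and $K'_M$ are singletons, each element of $K'_{B,\leq 3}$ is a set $N_H^M[b]$ with $\deg_H(b)\leq 3$ and hence of size at most $4$, and $Y'_{B,>3}$ is a set of single vertices, so together these contribute at most $3+3+12+3=21$ vertices to $K$. Each element of $K''_\mathcal{N}\cup N_{H''}(K''_\mathcal{D})$ is a singleton, and there are at most $9(L''+1)$ of them by Lemma~\ref{H''-balanced-sep}(\ref{H''-balanced-sep-2}), contributing at most $9(L''+1)$ further vertices; hence $|K|\leq 21+9(L''+1)$. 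I expect the only genuinely delicate point to be the edge-non-splitting check for $q$ — above all, excluding edges of $H$ between distinct flattened components of $H'\setminus K'$, where the induced-model structure of $H'$ is what is really being used; the remainder is bookkeeping with the partition of $V(H)$ and the two balanced separators supplied by Lemmas~\ref{tw-to-sep} and~\ref{H''-balanced-sep}.
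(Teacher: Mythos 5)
Your proof is correct and takes essentially the same route as the paper: flatten back from the two auxiliary graphs, observing that $V(H)\setminus K$ decomposes into the branch sets of $H''\setminus(K''_\mathcal{N}\cup N_{H''}(K''_\mathcal{D}))$ and that edges of $H$ between distinct branch sets induce edges of $H''$, then read off the weight bound from Lemma~\ref{H''-balanced-sep} and count vertices in $K$. The only stylistic difference is that you spell out the projection map $q$ and verify the edge-non-splitting property case by case, where the paper simply asserts that each component $C$ of $H\setminus K$ corresponds to a component $C''$ with $V(C)=\bigcup_{X\in C''}X$; your more explicit version is a faithful (and arguably clearer) unpacking of the same step.
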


\begin{proof}
    The induced subgraph $H \setminus ((\bigcup_{X \in K'_S \cup K'_M \cup K'_{B,\leq 3}} X) \cup Y'_{B,> 3})$ of $H$, which will be denoted by $F$, has vertex set $\{n\colon \{n\} \in \mathcal{N}\} \cup (\bigcup_{D \in \mathcal{D}} D)$. Thus, for every component $C$ of $F \setminus (\{n\colon \{n\} \in K''_\mathcal{N} \cup N_{H''}(K''_\mathcal{D})\})$, there is a corresponding component $C''$ of $H'' \setminus (K''_\mathcal{N} \cup N_{H''}(K''_\mathcal{D}))$ such that $V(C) = \bigcup_{X \in V(C'')} X$, and so it follows that $w(C) = w''(C'')$. By Lemma \ref{H''-balanced-sep} (\ref{H''-balanced-sep-1}), we have $w''(C'') \leq \frac{1}{2}$, thus $w(C) \leq \frac{1}{2}$. This proves (\ref{H-balanced-sep-1}).

    Since $K'_S$, $K'_M$, and $K'_{B,\leq 3}$ are all subsets of $K'$, they each have size at most $3$. Every element of $K'_S$ and $K'_M$ is a singleton set, and so $|\bigcup_{\{s\} \in K'_S} \{s\}| \leq 3$ and $|\bigcup_{\{m\} \in K'_M} \{m\}| \leq 3$. Every $N_H^M[b] \in K'_{B,\leq 3}$ has size at most $4$, and so
    \[\biggl\lvert\bigcup_{N_H^M[b] \in K'_{B,\leq 3}} N_H^M[b]\biggr\rvert \leq 12.\]
    Since $|K''_\mathcal{N} \cup N_{H''}(K''_\mathcal{D})| \leq 9(L''+1)$ by Lemma \ref{H''-balanced-sep} (\ref{H''-balanced-sep-2}) and every element of $K''_\mathcal{N} \cup N_{H''}(K''_\mathcal{D})$ is a singleton, we have
    \[\biggl\lvert\bigcup_{\{n\} \in K''_\mathcal{N} \cup N_{H''}(K''_\mathcal{D})} \{n\}\biggr\rvert \leq 9(L'' + 1).\]
    Moreover, the set $Y'_{B,> 3}$ has the same size as $K'_{B,> 3}$, and thus it has size at most $3$ (as $K'_{B,>3} \subseteq K'$). Combining these bounds, we have $|K| \leq 3 + 3 + 12 + 9(L'' + 1) + 3 = 21 + 9(L'' + 1)$. This proves (\ref{H-balanced-sep-2}).
\end{proof}

We are now ready to prove Lemma \ref{outerstring-isg-bound}.

\begin{proof}[Proof of Lemma \ref{outerstring-isg-bound}.]
    Let $g,k \in \poi$, and let $H$ be an induced subgraph of $G_k^g$ that is an outerstring graph. Let $L''$ be as in Corollary \ref{H''-tw}. Then for every weight function $w$ on $H$, by Lemma \ref{H-balanced-sep}, there is a $w$-balanced separator $X_w$ with $|X_w| \leq 21 + 9(L'' + 1)$. The claim now follows from Lemma \ref{sep-to-tw}, with $L = 42 + 18(L'' + 1)$.
\end{proof}

\section{Completing the proof}

We are now ready to complete the proof of our main result. As discussed at the end of Section~\ref{sec:construction}, we only need to show the following.

\begin{theorem}
    There exist\/ $t_0,L\in \poi$ such that for all\/ $g,k \in \poi$, the following hold.
    \begin{enumerate}[{\rm (i)}]
        \item $G_k^g$ is\/ $W_{t_0 \times t_0}$-induced-minor-free and\/ $K_{t_0,t_0}$-induced-minor-free; and
        \item if\/ $H$ is an induced subgraph of\/ $G_k^g$ and\/ $H$ is an outerstring graph, then\/ $\tw(H) \leq L$.
    \end{enumerate}
\end{theorem}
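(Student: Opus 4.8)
The plan is to assemble the final theorem entirely from the three main lemmas already established in the preceding sections, so the proof is essentially a bookkeeping exercise with a choice of constants. The three ingredients are: Lemma~\ref{no-wall-im}, which produces a constant $h_0$ such that $G_k^g$ contains no $W_{h_0\times h_0}$ as an induced minor; Lemma~\ref{no-bipartite-im}, which produces a constant $r_0$ such that $G_k^g$ contains no $K_{r_0,r_0}$ as an induced minor; and Lemma~\ref{outerstring-isg-bound}, which produces a constant $L$ such that every outerstring induced subgraph $H$ of $G_k^g$ satisfies $\tw(H)\le L$. All three constants are absolute (independent of $g$ and $k$), which is exactly what the statement demands.

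Concretely, I would set $t_0 = \max\{h_0, r_0\}$ and take $L$ to be the same constant as in Lemma~\ref{outerstring-isg-bound}. For part~(i): since induced-minor-freeness is monotone under taking larger walls/complete bipartite graphs (a graph containing $W_{a\times a}$ as an induced minor with $a\ge b$ also contains $W_{b\times b}$ as an induced minor, obtained by deleting the outer layers; similarly $K_{a,a}$ contains $K_{b,b}$ as a subgraph hence $G_k^g$ containing $K_{t_0,t_0}$ as an induced minor would contain $K_{r_0,r_0}$ as an induced minor), freeness at the threshold $t_0\ge h_0$ gives $W_{t_0\times t_0}$-induced-minor-freeness, and $t_0\ge r_0$ gives $K_{t_0,t_0}$-induced-minor-freeness. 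For part~(ii): this is literally the statement of Lemma~\ref{outerstring-isg-bound}. I would remark that the graph called $G$ in the statement should be read as $G_k^g$, matching the notation of Lemma~\ref{outerstring-isg-bound}.

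I anticipate essentially no obstacle here, since all the real work has been done in the earlier sections. The only point requiring a line of justification is the monotonicity claim for induced minors of walls — that $W_{a\times a}$ has $W_{b\times b}$ as an induced minor whenever $a\ge b$ — which follows because deleting the bottom and rightmost rows/columns of a wall (together with the associated pendant edges) yields a smaller wall, and this operation is a sequence of vertex deletions. One should also double-check that the quantifier order matches: the constants $t_0$ and $L$ must be chosen before $g$ and $k$, and indeed $h_0$, $r_0$, $L''$, $L$ in the cited lemmas are all produced uniformly over $g,k$, so the order is correct.

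\begin{proof}
    Let $h_0\in\poi$ be as in Lemma~\ref{no-wall-im}, let $r_0\in\poi$ be as in Lemma~\ref{no-bipartite-im}, and let $L\in\poi$ be as in Lemma~\ref{outerstring-isg-bound}. Set $t_0=\max\{h_0,r_0\}$. Fix $g,k\in\poi$ and put $G=G_k^g$.

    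We first prove~(i). For any $a\ge b$, the wall $W_{a\times a}$ contains $W_{b\times b}$ as an induced minor: deleting the outermost rows and columns of $W_{a\times a}$ (a sequence of vertex deletions) yields $W_{b\times b}$. Hence, if $G$ contained $W_{t_0\times t_0}$ as an induced minor, then since $t_0\ge h_0$ it would contain $W_{h_0\times h_0}$ as an induced minor, contradicting Lemma~\ref{no-wall-im}. Similarly, $K_{t_0,t_0}$ contains $K_{r_0,r_0}$ as a (necessarily induced, after deleting vertices) subgraph, so if $G$ contained $K_{t_0,t_0}$ as an induced minor, then since $t_0\ge r_0$ it would contain $K_{r_0,r_0}$ as an induced minor, contradicting Lemma~\ref{no-bipartite-im}. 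Therefore $G$ is $W_{t_0\times t_0}$-induced-minor-free and $K_{t_0,t_0}$-induced-minor-free.

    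Statement~(ii) is exactly Lemma~\ref{outerstring-isg-bound}: every induced subgraph $H$ of $G=G_k^g$ that is an outerstring graph satisfies $\tw(H)\le L$. This completes the proof.
\end{proof}
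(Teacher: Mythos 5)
Your proof is correct and takes essentially the same approach as the paper: set $t_0=\max\{h_0,r_0\}$ and import Lemmas~\ref{no-wall-im}, \ref{no-bipartite-im}, and \ref{outerstring-isg-bound}. The only difference is that you spell out the (routine) monotonicity argument justifying why freeness at thresholds $h_0$ and $r_0$ implies freeness at $t_0$, which the paper leaves implicit.
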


\begin{proof}
    Let $h_0$ and $r_0$ be as in Lemmas \ref{no-wall-im} and \ref{no-bipartite-im}, and let $t_0 = \max \{h_0,r_0\}$. Let $L$ be as in Lemma \ref{outerstring-isg-bound}. Then, by Lemmas \ref{no-wall-im} and \ref{no-bipartite-im}, $G_k^g$ does not contain $W_{t_0 \times t_0}$ or $K_{t_0,t_0}$ as an induced minor, and by Lemma \ref{outerstring-isg-bound}, if an induced subgraph $H$ of $G_k^g$ is an outerstring graph, then $\tw(H) \leq L$.
\end{proof}

We remark that, due to the more general setup of Lemma \ref{H''-bounded-degree}, our proof in fact gives the following stronger statement.

\begin{theorem}
\label{stronger}
    There exist\/ $t_0,L\in \poi$ such that for all\/ $g,k \in \poi$, the following hold.
    \begin{enumerate}[{\rm (i)}]
        \item $G_k^g$ is\/ $W_{t_0 \times t_0}$-induced-minor-free and\/ $K_{t_0,t_0}$-induced-minor-free; and
        \item if\/ $H$ is an induced subgraph of\/ $G_k^g$ and\/ $\tw(H) > L$, then\/ $H$ contains an\/ $\ell$-long theta as an induced subgraph for some\/ $\ell \geq 2^g-1$.
    \end{enumerate}
\end{theorem}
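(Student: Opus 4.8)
The plan is to re-run the argument that establishes Lemma~\ref{outerstring-isg-bound}, but replacing the working hypothesis ``$H$ is an outerstring graph'' by the weaker assumption ``$H$ contains no $\ell$-long theta with $\ell\ge 2^g-1$'', and then stating the conclusion contrapositively. In fact, apart from the final invocation of Lemma~\ref{long-theta}, essentially every lemma in the section preceding Lemma~\ref{outerstring-isg-bound} is already proved in enough generality, so the work is largely in tracing the dependencies and confirming this.

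Part~(i) is immediate: let $h_0$ and $r_0$ be the constants from Lemmas~\ref{no-wall-im} and~\ref{no-bipartite-im}, and set $t_0=\max\{h_0,r_0\}$; then for all $g,k\in\poi$, the graph $G_k^g$ contains neither $W_{t_0\times t_0}$ nor $K_{t_0,t_0}$ as an induced minor.

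For part~(ii), fix $g,k\in\poi$, write $G=G_k^g$, let $L''$ be the constant from Corollary~\ref{H''-tw}, and put $L=42+18(L''+1)$ --- the same constant as in the proof of Lemma~\ref{outerstring-isg-bound}. Let $H$ be an induced subgraph of $G$ with $\tw(H)>L$, and suppose for contradiction that $H$ contains no $\ell$-long theta with $\ell\ge 2^g-1$. Fix an arbitrary weight function $w$ on $H$ and build $H'$, $w'$, a $w'$-balanced separator $K'$ of $H'$ of size at most $3$ (using Lemma~\ref{H'-SP} and Lemma~\ref{tw-to-sep}), and then $H''$ and $w''$, exactly as before. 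The crucial point is that Lemma~\ref{H''-bounded-degree} is phrased as an unconditional dichotomy: either $H''$ has maximum degree less than $9$, or $H$ contains an $\ell$-long theta with $\ell\ge 2^g-1$. By assumption the second alternative fails, so the first holds --- and this is the only place where outerstringness was used in the original argument. Consequently Corollary~\ref{H''-tw} applies (its proof uses only that $H''$ is an induced minor of $G$ and has maximum degree less than $9$), giving $\tw(H'')\le L''$; then Lemma~\ref{tw-to-sep} yields a $w''$-balanced separator of $H''$ of size at most $L''+1$, and Lemmas~\ref{H''-balanced-sep} and~\ref{H-balanced-sep} convert it into a $w$-balanced separator in $H$ of size at most $21+9(L''+1)$. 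Since $w$ was arbitrary, Lemma~\ref{sep-to-tw} gives $\tw(H)\le 42+18(L''+1)=L$, contradicting $\tw(H)>L$. Hence $H$ contains an $\ell$-long theta with $\ell\ge 2^g-1$, which proves~(ii).

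The step I would check most carefully is the bookkeeping around where the hypothesis ``$H$ is an outerstring graph'' is actually consumed in the chain running from Lemma~\ref{H''-bounded-degree} through Lemma~\ref{H-balanced-sep}. In the source text that assumption is introduced just before Corollary~\ref{H''-tw}, but it is used solely to extract item~(\ref{H''-bound}) of Lemma~\ref{H''-bounded-degree}; once that maximum-degree bound is secured instead from the absence of long thetas, Corollary~\ref{H''-tw}, Lemma~\ref{H''-balanced-sep}, and Lemma~\ref{H-balanced-sep} go through verbatim and the constants $L''$ and $L$ are unchanged. I do not expect a genuine obstacle here --- the proof is essentially the remark that Lemma~\ref{H''-bounded-degree} already isolates the only theta-related input --- but one should confirm that no intermediate step implicitly invoked Lemma~\ref{long-theta} or any string-graph property beyond that single use, and that the per-weight-function construction of $H''$ causes no issue (it does not, since the degree bound and hence all subsequent bounds are uniform in $w$).
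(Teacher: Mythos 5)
Your proposal is correct and follows precisely the route the paper intends: the authors present Theorem~\ref{stronger} as an immediate consequence of the dichotomy in Lemma~\ref{H''-bounded-degree}, since the outerstring hypothesis in Lemma~\ref{outerstring-isg-bound} is consumed only to rule out the long-theta alternative. Your careful tracing of where that hypothesis enters the chain from Lemma~\ref{H''-bounded-degree} through Corollary~\ref{H''-tw} and Lemmas~\ref{H''-balanced-sep}, \ref{H-balanced-sep} to the final application of Lemma~\ref{sep-to-tw} confirms exactly the remark the paper makes in lieu of a written proof.
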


\section*{Acknowledgements}

This work was partly done during the 2024 Barbados Graph Theory Workshop, and the 2025 Oberwolfach Workshop on Graph Theory. We thank the organizers for inviting us and for creating a stimulating work environment. We also thank Rose McCarty and Nicolas Trotignon for many helpful discussions.

\bibliographystyle{plain}

\end{document}